\newcommand{\Title}{Title}
\numberwithin{equation}{section}
\theoremstyle{definition}\newtheorem{definition}{Definition}[section]
\newtheorem{defititle}[definition]{\Title}
\newtheorem{remark}[definition]{Remark}
\newtheorem{remarks}[definition]{Remarks}
\newtheorem{ep}[definition]{Example}
\newtheorem{eps}[definition]{Examples}}
\newtheorem{prop}[definition]{Proposition}
\newtheorem{proposition-definition}[definition]{Proposition-Definition}
\newtheorem{lemma}[definition]{Lemma}
\newtheorem{thm}[definition]{Theorem}
\newtheorem{cor}[definition]{Corollary}
\newcommand{\cG}{\mathcal{G}}
\newcommand{\cF}{\mathcal{F}}
\newcommand{\cE}{\mathcal{E}}
\newcommand{\cU}{\mathcal{U}}
\newcommand{\cH}{\mathcal{H}}
\newcommand{\cM}{\mathcal{M}}
\newcommand{\cf}{{\it cf.}\/ }
\newcommand{\vX}{\mathfrak{X}}
\def\gpd{\,\lower1pt\hbox{$\longrightarrow$}\hskip-.24in\raise2pt
             \hbox{$\longrightarrow$}\,}
\renewcommand{\latticebody}{\drop@{ }}
\newcommand{\N}{\ensuremath{\mathbb N}}
\newcommand{\R}{\ensuremath{\mathbb R}}
\newcommand{\g}{\ensuremath{\mathfrak{g}}}
\newcommand{\cL}{{\mathcal L}}
\newcommand{\cA}{\mathcal{A}}
\newcommand{\cC}{\mathcal{C}}            %category C
\newcommand{\cN}{\mathcal{N}}
\newcommand{\bt}{\mathbf{t}}                  %target
\newcommand{\bs}{\mathbf{s}}                  %source
\newcommand{\un}{\underline}
\def\act{\mathbin{\hbox{$<\kern-.4em\mapstochar\kern.4em$}}}
\def\ract{\mathbin{\hbox{$\mapstochar\kern-.3em>$}}}
\def\PB(#1,#2,#3,#4){\left\{\begin{matrix}#1&\!\!\!\stackrel{?}{\longrightarrow}&\!\!\!#2\\
\downarrow&&\!\!\!\downarrow\\
#3&\!\!\!\stackrel{?}{\longrightarrow}&\!\!\!#4\end{matrix}\right\}}
\def\pb(#1,#2,#3,#4){ \hom(#1 \to #3, #2 \to #4)}
\begin{document}

\begin{center}
{\Large\bf Holonomy transformations   for singular foliations\footnote{AMS subject classification: 	53C12, 53C29 ~ Secondary 22A25, 93B18. Keywords: singular foliation, holonomy, proper Lie groupoid, linearization, deformation.} 

%93B18  	Linearizations
%	53C29  	Issues of holonomy
%	53C12Foliations (differential geometric aspects)
%53R30 Foliations; geometric theory
%22A25	Representations of general topological groups and semigroups
%22A22  	Topological groupoids (including differentiable and Lie groupoids)
\bigskip

{\sc by Iakovos Androulidakis and Marco Zambon}
}
 
\end{center}

{\footnotesize
National and Kapodistrian University of Athens
\vskip -4pt Department of Mathematics
\vskip -4pt Panepistimiopolis
\vskip -4pt GR-15784 Athens, Greece
\vskip -4pt e-mail: \texttt{iandroul@math.uoa.gr}
%\vskip -4pt e-mail: \texttt{iakovos@uni-math.gwdg.de}
  
\vskip 2pt Universidad Aut\'onoma de Madrid
\vskip-4pt Departamento de Matem\'aticas 
\vskip-4pt and ICMAT(CSIC-UAM-UC3M-UCM)
\vskip-4pt Campus de Cantoblanco, 28049 - Madrid, Spain
\vskip-4pt e-mail: \texttt{marco.zambon@uam.es, marco.zambon@icmat.es}
}
\bigskip
\everymath={\displaystyle}

\date{today}

\begin{abstract}\noindent 
In order to understand   the linearization problem around a leaf of a singular foliation, we extend the familiar holonomy map from the case of  regular foliations to the case of singular foliations. To this aim we introduce the notion of holonomy transformation. Unlike the regular case, holonomy transformations can not be attached to classes of paths in the foliation, but rather to elements of the holonomy groupoid of the singular foliation.   
 
Holonomy transformations allow us to link the linearization problem with the compactness of the isotropy group of the holonomy groupoid, as well as with the linearization problem for proper Lie groupoids.  We also  study the  deformations of a singular foliation, recovering the deformation cocycles of Crainic--Moerdijk as well as those of Heitsch.
\end{abstract}
 
\setcounter{tocdepth}{2} %doesn't display subsections in TOC 
\tableofcontents

\section*{Introduction}
\addcontentsline{toc}{section}{Introduction}

\subsection*{Historical overview and motivations}

A great deal of   foliation theory is based on the understanding of the action of the holonomy pseudogroup on the transversal structure of the foliation. Geometrically, the holonomy of a 
%co-dimension $q$ 
(regular) foliation $(M,F)$   at a point $x\in M$ is realised by a map $h_x \colon \pi_1(L) \to GermDiff(S)$, where $L$ is the leaf at $x$, $S$ is a transversal at $x$, and $GermDiff(S)$ is the space of germs of local diffeomorphisms of $S$. Its linearisation $Lin(h_x) \colon  \pi_1(L) \to GL(N_xL)$ is a representation on the normal space to $L$ at $x$. When one considers all pairs of points in leaves of $M$, the linearization gives rise to a representation $Lin(h)$ of the holonomy groupoid on  $TM/F$, the normal bundle to the leaves. Notice that $TM/F$ plays the role of the tangent bundle of the quotient space $M/F$ (\cf \cite[\S 10.2]{Connes}), and is the starting point for various invariants carrying geometric, topological, and analytic information of the given foliation (an account of which was given in \cite{AnZa11}).

As in \cite{AnZa11}, in the current article we are concerned with the much larger class of \textit{singular} foliations.   Recall that we understand a (singular)  foliation on a smooth manifold $M$ as a $C^{\infty}(M)$-submodule $\cF$ of compactly supported vector fields on $M$ which is locally finitely generated and closed by the Lie bracket. We extend the notion of holonomy to the singular case, in order to 
 understand the linearization and stability properties of singular foliations. The results we wish to ultimately generalize to the singular case are the following:
\begin{itemize}
\item The \textit{local Reeb stability theorem}, which 
 gives a normal form of a (regular) foliated manifold $(M,F)$ around a compact leaf $L$. Namely, when the leaf $L$ has finite holonomy, the theorem states that there is a neighborhood of $L$ which is diffeomorphic to $\frac{\widetilde{L} \times N_xL}{\pi_1(L)}$ endowed with the ``horizontal'' foliation (see \cite[Thm 2.9]{MM}). Here the representation of $\pi_1(L)$ on $N_xL$ is exactly $Lin(h_x)$. This quotient is diffeomorphic to 
$NL$,   the normal bundle  of the leaf, endowed with the linearization of the foliation $F$. Hence the local Reeb stability theorem can be viewed as a linearization result.

\item  A certain \emph{cocycle} defined by  Heitsch \cite{Heitsch}, which controls deformations of foliations.
 Note that Crainic and Moerdijk, studying deformations of Lie algebroids in \cite{CMdef},  introduce a cohomology theory (deformation cohomology) and a certain cocycle which controls such more general deformations; they also show that it recovers Heitsch's cocycle when the Lie algebroid is a regular foliation.  

\item 
The notion of \emph{Riemannian foliation}. A Riemannian foliation consists of a regular foliation $F$ on a Riemannian manifold $M$ such that  such that the action $Lin(h)$ of the holonomy groupoid on the normal bundle $F^{\perp}\to M$ is by isometries \cite{Mo88}\cite[\S 1]{Hur}.   A lot can be said about the structure and topology of  Riemannian foliations, see for instance \cite{Mo88}\cite{MM}.
%We refer to \S \ref{subsec:rf} for the extension of this notion to the case of singular foliations.
\end{itemize}

We elaborate on the singular version of the first item above, that is, 
 the question of whether a singular foliation is isomorphic to its linearization. This question is interesting already in the case of singular foliations generated by one vector field. In this case, it reads as follows and is an interesting problem in the theory of differential equations:
\begin{itemize}
\item Consider a vector field $X$ on a smooth manifold $M$ vanishing at a point $x$. Its linearization   is the vector field $X_{lin}$ on $T_xM$ defined by the first-order (linear) term of the Taylor expansion of $X$ at $x$. Under what  assumptions  is there  a diffeomorphism $\phi$ from a neighborhood of $x$ to neighborhood $V$ of the origin  in $T_xM$ and a nowhere-vanishing function $f\in C^{\infty}(V)$ such that $\phi$ identifies $X$ with $f\cdot X_{lin}$? When this occurs, $X$ and $X_{lin}$ are said to be \emph{orbitally equivalent}, as their orbits are identified by $\phi$. The literature\footnote{The question of whether there is a diffeomorphism mapping $X$ straight to $X_{lin}$ appears to be treated more in the literature, see Sternberg's \cite[Thm. 4]{Ste57} and, in the setting of homeomorphisms, the Hartman-Grobman Theorem \cite[\S 2.8]{Pe01}.}
  seems to provide an answer to this question only when $M$ has dimension $2$ and in the formal setting, see \cite[Prop. 4.29]{IY08}.
\end{itemize}

Recent work by Crainic and Struchiner \cite{CrStr} on the linearization of proper Lie groupoids does provide a linearization result for those singular foliations $\cF$ which arise from such groupoids. One question that arises naturally is  what role  the holonomy groups (namely the isotropy groups of the holonomy groupoid) play in the linearizability of the foliation. After all, much like regular foliations and the Reeb stability theorem there, also in the singular case it is reasonable to require linearization conditions  using the least possible information, and the holonomy groupoid naturally provides the correct framework for this among all other groupoids the foliation is possibly defined from. However, techniques which work in the smooth category, like the ones developed in \cite{CrStr}, cannot be applied to the holonomy groupoid of a singular foliation, given the pathology of its topology (see \cite{AndrSk}).  In fact, the range of this applicability in the framework of singular foliations is an endeavour of different order, well worth investigating in a separate article.

\subsection*{Main tools and overview of results}
Our current study relies once again on the construction of the holonomy groupoid $H$ for \textit{any} singular foliation $(M,\cF)$ given in \cite{AndrSk}, and the notion of bi-submersion introduced there. Recall that $H$ is a topological groupoid associated to $(M,\cF)$ (in particular, it encodes more information  than just  the partition of $M$ into leaves of $\cF$), and that  a bi-submersion is a smooth cover of an open subset of the (often) topologically pathological groupoid $H$.  {In \cite{Debord} and \cite{AnZa11} it was shown that the restriction of $H$ to a leaf is smooth; we build strongly on this result.}  %In \cite{AnZa11} we showed that if a leaf satisfies certain regularity conditions, then the restriction of $H$ to the leaf is smooth; we build strongly on this result. 

If one tries to define the holonomy of a singular foliation starting from (classes of) paths, as in the regular case, one obtains a very coarse notion, which does not allow for linearization. To remedy this, we introduce the notion of  \emph{holonomy transformation}, an equivalence class of germs of diffeomorphisms. It is the correct replacement of $GermDiff(S)$ for a singular foliation, as it encodes the geometric idea of holonomy and specializes correctly in the regular case. We explain this in the first item of the following list, which presents our main results in \S \ref{section:geomhol}--\S \ref{sec:lht}:

\begin{itemize}
 
\item For $x, y$ in the same leaf of a singular foliation $(M,\cF)$, consider transversal slices $S_x, S_y$ to the leaf at $x$ and $y$ respectively. There is a well-defined map $$\Phi_x^y \colon H_x^y \to \frac{GermAut_{\cF}(S_x;S_y)}{exp(I_x \cF_{S_x}) },$$ suitably constructed restricting the flows of vector fields in $\cF$.
The above target space is the space of germs of foliation-preserving local diffeomorphisms between $S_x$ and $S_y$, quotiented by the exponentials of elements in the maximal ideal $I_x\cF_{S_x}$ of the restriction of $\cF$ to $S_x$ (Thm. \ref{globalaction}).  Elements of the target are, by definition, holonomy transformations.

 The maps $\Phi_x^y$ assemble to a morphism of groupoids $\Phi \colon H\to\{\text{holonomy transformations}\}$, which in the case of regular foliations recovers the usual notion of holonomy given assembling  the maps $h_x$ introduced earlier. We prove that the map $\Phi$ is injective (Thm. \ref{thm:inj}), and therefore the holonomy groupoid $H$ obtains the following geometric characterization: it can be viewed as a subset of the set of holonomy transformations.
 
\item The map $\Phi$ linearizes to a morphism of groupoids $Lin(\Phi) \colon H \to Iso(N)$, whose target is the groupoid of isomorphisms between the fibres of the (singular) normal bundle to the leaves $N$ (linear holonomy). See Prop. \ref{globallinaction}.

\item Although the normal ``bundle'' $N$ is a singular space, its sections form a nice $C^{\infty}(M)$-module $\cN = \vX(M)/\widehat{\cF}$, where $\widehat{\cF}$ denotes a canonical completion of $\cF$. The ``bundle'' $N$ carries transversal information only up to first order, while all the higher order transversal data is carried by the module $\cN$. We show that $Lin(\Phi)$ differentiates to the ``Bott connection'' $\widehat{\cF} \times \cN \to \cN$ (Prop. \ref{transvrep}).
\end{itemize}

The map $\Phi$ above  encodes the geometric notion of holonomy for singular foliations -- quite a non-trivial notion --, and as such it is geometrically relevant and interesting. The well-definess of $\Phi$ is the technically hardest result in this paper, and 
the existence of  $\Phi$ gives a full geometric justification for the terminology ``holonomy groupoid''.
Our results toward a generalization of the Reeb stability theorem are based   on its linearization $Lin(\Phi)$. More precisely, they are based on 
the fact that, given a leaf satisfying certain regularity conditions, the restriction of  $Lin(\Phi)$ to the leaf is a 
\emph{Lie groupoid representation.} 
%(this is a consequence of  \cite{AnZa11}.)}
 The main results of \S \ref{section:linfol} are:
\begin{itemize}

\item We give two local models for the foliation around a   leaf $L$:
\begin{enumerate}
\item The normal bundle $NL$ is endowed with the foliation $\cF_{lin}$  generated by the linearizations of vector fields in $\cF$.
Under regularity conditions on $L$, $\cF_{lin}$ is the foliation induced by the above-mentioned Lie groupoid representation.
\item Under regularity conditions on the leaf $L$,  the quotient $Q=\frac{H_x \times N_xL}{H_x^x}$ is smooth,   where $H_x^x$ acts on $N_xL$ by the restriction of the linear holonomy $Lin(\Phi)$.
%The smoothness relies on the fact that $H_x^x$ is a Lie group, a consequence of \cite{AnZa11}. 
$Q$ is endowed with a canonical singular foliation.

\end{enumerate}
We show that the two models are isomorphic (see Prop. \ref{prop:equiv2m}). Notice that the second model $Q$ is the natural generalization of the model appearing in the Reeb stability theorem for regular foliations.
 
\item Under regularity conditions on the leaf $L$
we show the following equivalence, where $x\in L$:    $\cF$ is linearizable about $L$ and $H_x^x$ is compact if{f} $\cF$ (in a neighborhood of $L$) is the foliation induced by a Hausdorff Lie groupoid which is proper at $x$ (see Prop. \ref{prop:proper}).

In this case, $\cF$ admits the structure of a singular Riemannian foliation  around $L$ (Prop. \ref{prop:Riem}).
\end{itemize}

A key ingredient to prove the last item above is the following observation \cite[Ex. 3.4(4)]{AndrSk}: when a foliation $\cF$ is induced by a Lie groupoid $G$, then the holonomy groupoid $H$ of $\cF$ is a quotient of $G$ (in particular, the properness of $G$ at $x$ implies the compactness of $H_x^x$). This observation implies   that there is a huge class of linearizable foliations which \textit{cannot} be defined by any proper Lie groupoid, namely those foliations which are defined from a linear action of a non-compact group (see remark \ref{rem:linearization}). Further, the problem of when a given Lie algebroid $A$ integrates to a proper
Lie groupoid is an open  one, and the above observation shows: if the foliation defined by
$A$ has non-compact isotropy group $H_x^x$ at some point $x$, then there is no
proper Lie groupoid integrating $A$.

Finally in \S \ref{section:deform} we consider deformations:
\begin{itemize} 

\item  We define the cohomology groups $H^*_{def}(\widehat{\cF})$ (deformation cohomology) and, using the ``Bott connection'', we define $H^*(\widehat{\cF},\cN)$ (foliated cohomology). Using the techniques of  \cite{CMdef} we find  that deformations of $(M,\cF)$ with isomorphic underlying $C^{\infty}(M)$-module structure are controlled by a certain element of $H^2_{def}(\widehat{\cF})$. There exists a canonical map $H^2_{def}(\widehat{\cF}) \to H^1(\widehat{\cF},\cN)$, and in the regular case the image of the above cocycle is exactly Heitsch's class.
\end{itemize}
 
\noindent\textbf{Notation:} Given a manifold $M$, we  use $\vX(M)$ to denote its vector fields, and  $\vX_c(M)$ its vector fields with compact support. For a vector field $X$ and $x\in M$, we use $exp_x(X)\in M$ to denote the time-one flow of $X$ applied to $x$. By $\cF$ we will always denote a singular foliation on $M$ and $L$ a leaf. If $X \in \cF$, we use $[X]$ to denote the class $X \text{ mod }I_x\cF$ (here $x\in M$). The notation $\langle X \rangle$ is used to denote either classes under several other equivalence relations or  the foliation generated by $X$.\\
The holonomy groupoid of a singular foliation is denoted by $H$. Further, $H_x=\bs^{-1}(x)$ is the source fiber  and $H_x^x=\bs^{-1}(x)\cap \bt^{-1}(x)$ the isotropy group at $x$; the same notation applies to bi-submersions  $U,W,...$

\noindent\textbf{Acknowledgements:} We would like to thank Marius Crainic,  Rui Fernandes, Camille Laurent, Kirill Mackenzie, Ioan Marcu\c{t}, Ivan Struchiner and especially Georges Skandalis for illuminating discussions. 
We thank the anonymous referee for  suggestions towards improving the presentation of the material.

I. Androulidakis was partially supported by a Marie Curie Career Integration Grant (FP7-PEOPLE-2011-CIG, Grant No PCI09-GA-2011-290823), by DFG (Germany) through project DFG-Az:Me 3248/1-1 and by FCT (Portugal) with European Regional Development Fund (COMPETE) and national funds through the project PTDC/MAT/098770/2008. He also acknowledges the support of a Severo Ochoa grant during a visit to ICMAT. 

%M. Zambon thanks Marius Crainic, Ioan Marcu\c{t} and Ivan Struchiner
%for the insights they shared during a visit to Utrecht in September 2011, and 
M. Zambon was partially supported by CMUP and FCT (Portugal) through the programs POCTI, POSI and Ciencia 2007, by projects PTDC/MAT/098770/2008 and PTDC/MAT/099880/2008 (Portugal) and by  projects MICINN RYC-2009-04065,  MTM2009-08166-E, MTM2011-22612 and  MINECO: ICMAT Severo Ochoa project SEV-2011-0087(Spain). He thanks Universiteit Utrecht and  Universit\"at G\"ottingen for  hospitality, and Daniel Peralta-Salas for advice and references.

\section{Background material} 

For the convenience of the reader we give here an outline of the constructions and results of \cite{AndrSk} and \cite{AnZa11}. 

\subsection{Foliations}\label{sec:fol}

Let $M$ be a smooth manifold. Given a vector bundle $E \to M$ we denote $C^{\infty}_c(M;E)$ the $C^{\infty}(M)$-module of compactly supported sections of $E$. Stefan \cite{Stefan} and Sussmann \cite{Sussmann} showed that the following definition gives rise to a partition of $M$ into injectively immersed submanifolds (leaves):
\begin{definition}\label{def:fol}
A \textit{(singular) foliation} on $M$ is a locally finitely generated submodule $\cF$ of the $C^{\infty}(M)$-module $\vX_c(M) = C^{\infty}_c(M;TM)$, stable by the Lie bracket.
\end{definition}
It was shown in \cite{Stefan, Sussmann} that such a module induces a partition of $M$ to (immersed) submanifolds, called leaves. The leaf at $x \in M$ of a singular foliation $\cF$ is the set of points in $M$ which can be connected to $x$ following integral curves of vector fields in $\cF$.

In general, a singular foliation contains more information than the underlying   partition of $M$ into leaves. Singular foliations arise in many natural geometric contexts: from actions of Lie groups and, more generally, from Lie groupoids and Lie algebroids.
The following apparatus is naturally associated with a foliation, and will be of use   in the current article.
\begin{enumerate}
\item[a)] A leaf $L$ is \textit{regular} if there exists an open neighborhood $W$ of $L$ in $M$ such that the dimension of $L$ is equal to  the dimension of any other leaf intersecting $W$. Otherwise $L$ will be called \textit{singular}.

\item[b)] For $x \in M$ let $I_x = \{f \in C^{\infty}(M) : f(x) = 0\}$ and consider the maximal ideal $I_x\cF$ of $\cF$. Since $\cF$ is locally finitely generated, the quotient $\cF_x = \cF/I_x\cF$ is a finite dimensional vector space. Let $F_x$ be the tangent space at $x$ of the leaf $L_x$ passing through $x$. We have a short exact sequence of vector spaces $$0 \to \g_x \to \cF_x \stackrel{ev_x}{\to} F_x \to 0$$ Its kernel $\g_x$ is a Lie algebra, which vanishes iff $L_x$ is a regular leaf.
Explicitly, $\g_x=\cF(x)/I_x\cF$ where $\cF(x):=\{X\in \cF:X_x=0\}$.

\item[c)] Let $L$ be a leaf and put $I_L$ the space of smooth functions on $M$ which vanish on $L$. Then $A_L = \cup_{x \in L}\cF_x$ is a transitive Lie algebroid over $L$. If $L$ is embedded then $C^{\infty}_c(L;A_L) = \cF/I_L\cF$. If $L$ is immersed then the previous equality holds locally (see \cite[Rem. 1.8]{AnZa11}).

 \item[d)]   We can pull back a foliation along a {smooth map}: If $p: N \longrightarrow M$ {is smooth} then $p^{-1}(\cF)$ is the submodule 
of   $\vX_c(N)$ consisting of 
%vector fields $Y$ such that $y\mapsto dp(Y_y)$ lies in $p^*(\cF)$. The latter is defined as the submodule of $C^{\infty}_{c}(N;p^*(TM))$ generated by $f\cdot(X\circ p)$ for $f\in C^{\infty}_{c}(N)$ and $X\in \cF$.  
%In order words,   $p^{-1}(\cF)$ consists of  
$C^{\infty}_{c}(N)$-linear combinations of vector fields on $N$ which are projectable and project to elements of $\cF$.
\end{enumerate}

Associated to $\cF$ there is a canonical  natural submodule,  which contains non-compactly supported vector fields  when $M$ is not compact (see \cite[\S 1.1]{AndrSk}):
\begin{definition}\label{def:folhat} The \emph{completion} of $\cF$ is the following $C^{\infty}(M)$-submodule of $\vX(M)$:
$$\widehat{\cF}:=\{X\in \vX(M): fX\in \cF \text{ for all } f\in  C^{\infty}_c(M)\}.$$ 
\end{definition}
Associating to each open subset $U$ of $M$ the $C^{\infty}(U)$-module $\widehat{\cF|_U}$ we obtain a sheaf. Indeed, this is the sheafification of the presheaf which associates $\{X|_U:X\in \cF\}$
to $U$.

Passing from $\cF$ to $\widehat{\cF}$ we do not lose any information, since $\cF$ is recovered as the set of compactly supported elements of $\widehat{\cF}$.
Indeed there is a bijection  
$$ \text{\{submodules of $\vX_c(M)$\}}\to  \{\text{submodules of $\vX(M)$ giving rise to sheaves\}}, \;\;\cE \mapsto \widehat{\cE},$$
whose inverse map takes $\cG$ to its submodule of compactly supported sections (it is generated by $\{fY: f\in C_c^{\infty}(M), Y \in \cG\}$.) Further, $\cE$ is locally finitely generated if{f} $\widehat{\cE}$ is, and the same holds for involutivity.  {One checks easily that for every $x \in M$, the vector spaces $\cF_x, \g_x$ coincide with $\widehat{\cF}/I_x\widehat{\cF}$ and $\widehat{\cF}(x)/I_x\widehat{\cF}$ respectively.} We will make use of $\widehat{\cF}$ only in \S\ref{subsection:linholrep}, \S\ref{section:deform} and \S\ref{subsec:normalmodule}.

 The local picture of a foliation is the following: 
 \begin{prop}\label{thm:splitting}
{\em\bf{(Splitting theorem)}} Let $(M,\cF)$ be a manifold with a foliation and $x \in M$, and set $k := dim (F_{x})$. Let $\hat{S} $ be  a slice at $x$, that is, an embedded submanifold such that $T_x\hat{S}\oplus F_x=T_xM$.

 Then there exists an open neighborhood $W$ of $x$ in $M$ and a diffeomorphism of foliated  manifolds 
 \begin{equation}\label{diff}
(W,\cF_W)\cong (I^k,TI^k) \times (S,\cF_S). 
\end{equation}
 Here {$\cF_W$ is the restriction of $\cF$ to $W$,} $I:=(-1,1)$, $S:=\hat{S} \cap W$ and   $\cF_S$ consists of the restriction to $S$ of vector fields in $W$ tangent to $S$.
 % (so $\cF_S=\iota^{-1}\cF$ for $\iota \colon S \hookrightarrow W$). 
 
In particular, if  we denote by $s_1,\dots,s_k$ the canonical coordinates on $I^k$ and $X_1,\dots,X_l$ are generators of $\cF_S$, then
$\cF_W$ is generated by $\partial_{s_1},\dots, \partial_{s_k}$ and the (trivial extensions of) $X_1,\dots,X_l$.
\end{prop}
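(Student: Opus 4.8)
The plan is to realise the isomorphism~\eqref{diff} explicitly by flowing the slice $\hat S$ along suitable elements of $\cF$, and then to identify the pulled-back foliation. Since $ev_x(\cF_x)=F_x$ has dimension $k$, I would first choose $Y_1,\dots,Y_k\in\cF$ whose values $Y_1(x),\dots,Y_k(x)$ form a basis of $F_x$. As elements of $\cF\subseteq\vX_c(M)$ they have complete flows $\Phi^i_t$, and I define
\[
\phi(s_1,\dots,s_k,y)=\Phi^1_{s_1}\circ\cdots\circ\Phi^k_{s_k}(y),\qquad (s,y)\in\R^k\times\hat S .
\]
A direct computation of $d\phi_{(0,x)}$ shows that it sends $\partial_{s_i}\mapsto Y_i(x)$ and restricts to the identity on $T_x\hat S$; by the slice hypothesis $F_x\oplus T_x\hat S=T_xM$ this is an isomorphism, so after shrinking, $\phi$ is a diffeomorphism from $I^k\times S$ onto an open neighborhood $W$ of $x$, with $S=\hat S\cap W$ and $\phi|_{\{0\}\times S}=\id_S$. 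This produces the underlying diffeomorphism; it remains to check it is foliation-preserving.

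The single technical input I would rely on is that a singular foliation is invariant under the flows of its own elements, i.e. $(\Phi^X_t)_*\cF=\cF$ for every $X\in\cF$ (see \cite{AndrSk}); this is precisely where involutivity and local finite generation enter. Using it, $\phi_*\partial_{s_j}=(\Phi^1_{s_1}\circ\cdots\circ\Phi^{j-1}_{s_{j-1}})_*Y_j\in\cF$, so each $\partial_{s_j}$ lies in the pulled-back foliation $\phi^{-1}(\cF)$ (which is again a foliation, being the pullback along a diffeomorphism of $\cF_W$). Applying the same invariance property now to the foliation $\phi^{-1}(\cF)$ and to its elements $\partial_{s_j}$ — whose flows are exactly the translations in the $s_j$ variables — shows that $\phi^{-1}(\cF)$ is invariant under all translations of $I^k$.

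Finally I would decompose any $V\in\phi^{-1}(\cF)$ as $V=\sum_j f_j\,\partial_{s_j}+V'$ with $V'$ tangent to the $S$-fibres; since the $\partial_{s_j}$ already lie in $\phi^{-1}(\cF)$, also $V'\in\phi^{-1}(\cF)$, and the translation-invariance just established forces the fibrewise part to be the pullback $\pr_S^{-1}(\cF_S)$ of its restriction to $\{0\}\times S$. Evaluating at $s=0$ identifies those restrictions with $\cF_S$, giving $\phi^{-1}(\cF)=\langle\partial_{s_1},\dots,\partial_{s_k}\rangle+\pr_S^{-1}(\cF_S)$, which is the product foliation of \eqref{diff}. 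In particular, if $X_1,\dots,X_l$ generate $\cF_S$, then $\cF_W$ is generated by $\partial_{s_1},\dots,\partial_{s_k}$ together with the trivial extensions of the $X_i$, establishing the last assertion. I expect the only genuine subtlety to be the flow-invariance lemma, together with the care needed to see that translation-invariance of the fibrewise part is equivalent to its being pulled back from the slice; the differential computation and the module bookkeeping are routine.
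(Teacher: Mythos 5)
Your argument is essentially correct, and it is essentially the standard proof: the paper itself does not prove Prop.~\ref{thm:splitting} but recalls it from [AnSk06], where the argument is run by induction on $k$, straightening one vector field at a time via its flow and using flow-invariance of $\cF$; your ``all $k$ directions at once'' map $\phi(s,y)=\Phi^1_{s_1}\circ\cdots\circ\Phi^k_{s_k}(y)$ is the same idea packaged differently, and the differential computation, the identification $\hat S\cap W=S$ after shrinking, and the module decomposition $V=\sum_j f_j\partial_{s_j}+V'$ all go through.

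The one place where you should be more careful is the assertion $\phi_*\partial_{s_j}=(\Phi^1_{s_1}\circ\cdots\circ\Phi^{j-1}_{s_{j-1}})_*Y_j\in\cF$ (and the analogous step for translation-invariance of the fibrewise part). The diffeomorphism on the right depends on $s_1,\dots,s_{j-1}$, which are functions of the base point, so $\phi_*\partial_{s_j}$ is not the pushforward of $Y_j$ by a single element of $\exp(\cF)$; the bare statement ``$(\Phi^X_t)_*\cF=\cF$ for each fixed $t$'' is not quite enough. What you need is the parametrized form of the invariance lemma: writing $\cF=\langle X_1,\dots,X_n\rangle$ locally, one has $(\Phi^X_t)_*Y=\sum_i a_i(t,\cdot)X_i$ with coefficients $a_i$ jointly smooth in $(t,\text{point})$; substituting $t=s_j(w)$ then exhibits $\phi_*\partial_{s_j}$ as a $C^\infty$-combination of the generators, hence in $\cF_W$. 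The same refinement is what lets you conclude that the translation-invariant module $G_0$ of fibre-tangent fields is generated by the $s$-independent extensions of its restrictions to $\{0\}\times S$ (the change-of-generators matrix $a_i(s,(s,p))$ is invertible near $s=0$). This parametrized statement is exactly what is proved and used in [AnSk06], so there is no gap in substance, but as written your key displayed identity conflates a fixed pushforward with a point-dependent family and should be reformulated accordingly.
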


\subsection{Holonomy groupoids}\label{section:holgpd}

Let $(M,\cF)$ be a (singular) foliation. We recall the notion of bi-submersion from \cite{AndrSk} and the construction of the holonomy groupoid.
\begin{enumerate}
    \item A {\em{bi-submersion}} of $(M,\cF)$ is a smooth manifold $U$ endowed with two submersions $\bt, \bs : U \longrightarrow M$ satisfying:
        \begin{enumerate}
            \item[(i)] $\bs^{-1}(\cF) = \bt^{-1}(\cF)$,
            \item[(ii)] $\bs^{-1}(\cF) =  C^{\infty}_{c}(U;\ker d\bs) + C^{\infty}_{c}(U;\ker d\bt)$.
        \end{enumerate}
We say $(U,\bt,\bs)$ is {\em{minimal}} at $u$ if $dim(U) = dim(M) + dim(\cF_{\bs(u)})$.
    \item\label{phbisubm} Let $x\in M$, and $X_1,\ldots,X_n \in \cF$ inducing a basis of $\cF_x$.  In \cite[Prop. 2.10 a)]{AndrSk} it was shown that there is an open neighborhood $U$ of $(x,0)$ in $M \times \R^n$ such that $(U,\bt_U,\bs_U)$ is a bi-submersion minimal at $(x,0)$, where $\bs_U(y,\xi)=y$ and
   $\bt_U(y,\xi) = exp_y(\sum_{i = 1}^n \xi_i X_i)$. (Recall that the latter is the image of $y$ under the time-$1$ flow of $\sum_{i = 1}^n \xi_i X_i$.) Bi-submersions arising this way are called  {\em{path holonomy bi-submersions}}.  

    \item Let $(U_i,\bt_i,\bs_i)$ be bi-submersions, $i = 1,2$. Then $(U_i,\bs_i,\bt_i)$ are bi-submersions, as well as $(U_1\circ U_2,\bt,\bs)$ where $U_1\circ U_2 = U_1 \times_{\bs_1,\bt_2}U_2$, $\bt(u_1,u_2) = \bt(u_1)$ and $\bs(u_1,u_2) = \bs(u_2)$. They are called the \emph{inverse} and \emph{composite} bi-submersions respectively.

	\item Let $(U,\bt_{U},\bs_{U})$ and $(V,\bt_{V},\bs_{V})$ be two bi-submersions. A {\em{morphism of bi-submersions}} is a smooth map $f : U \longrightarrow V$ such that $\bs_{V} \circ f = \bs_{U}$ and $\bt_{V} \circ f = \bt_{U}$.
    \item A {\em{bisection}} of $(U,\bt,\bs)$ is a locally closed submanifold $V$ of $U$ on which the restrictions of $\bs$ and $\bt$ are diffeomorphisms to open subsets of $M$.
     \item We say that $u \in U$ {\em{carries}} the foliation-preserving local diffeomorphism $\psi$ if there is a bisection $V$ such that $u \in V$ and $\psi = \bt\mid_V \circ (\bs\mid_V)^{-1}$.
     \item It was shown in  \cite[Cor. 2.11(b)]{AndrSk} that if $\{(U_i,\bt_i,\bs_i)\}_{i\in I}$ are bi-submersions, $i = 1,2$ then $u_1 \in U_1$ and $u_2\in U_2$  carry the same local diffeomorphism iff there exists a morphism of bi-submersions $g$ defined in an open neighborhood of $u_1 \in U_1$ such that $g(u_1) = u_2$. Such a morphism maps every bisection $V$ of $U_1$ at $u_1$ to a bisection $g(V)$ of $U_2$ at $u_2$.
\end{enumerate}

Bi-submersions are the key for the construction of the holonomy groupoid. Let us recall this construction: 
 
Given a foliation $(M,\cF)$, take a family of path holonomy bi-submersions $\{U_i\}_{i\in I}$ covering $M$, i.e. $\cup_{i\in I}\bs(U_i)=M$.
Let  $\cU$ be the family of all  finite compositions of elements of $\{U_i\}_{i\in I}$ and of  their inverses ($\cU$ is a \emph{path-holonomy atlas}, see \cite[Ex. 3.4(3)]{AndrSk}). 
The   \textit{holonomy groupoid} of the foliation $\cF$ is
the quotient 
$$H(\cF) :=\coprod_{U\in \cU}U/\sim$$ by the equivalence relation   for which $u\in U$ is equivalent to $u'\in U'$ if there is a morphism of bi-submersions $f:W\to U'$ defined in a neighborhood $W\subset U$ of $u$ such that $f(u)=u'$.
 
We denote the holonomy groupoid by $H$ when the choice of $\cF$ is clear.   Its restriction to a leaf $L$ is $H_L = \bs^{-1}(L) = \bt^{-1}(L)$. Its $\bs$-fiber is $H_x = \bs^{-1}(x)$ and its isotropy group $H_x^x = \bs^{-1}(x)\cap \bt^{-1}(x)$, where $x \in L$. The following was proven in \cite{Debord}:

\begin{thm}
The $\bs$-fibers of $H$ are smooth manifolds.
\end{thm}

In \cite{AnZa11} it was shown that a consequence of this is the following:

\begin{cor}\label{ALintegr}
The transitive groupoid $H_L$ is smooth and integrates the Lie algebroid $A_L   = \cup_{x\in L}\cF_x$.
\end{cor}

\subsection{The holonomy groups}\label{sec:essiso}

In \S \ref{section:adjoint} we will also need the following material from \cite{AnZa11}:

Pick a point $x \in L$ and consider the connected and simply connected Lie group $G_x$ integrating the Lie algebra $\g_x$. Let $\{X_i\}_{i\le n} \in \cF$  be vector fields whose images in $\cF_x$ form a basis of $\cF_x$  and such that the images of $\{X_i\}_{i\le \ell}$ form a basis of $\g_x$. Let $U$ be the corresponding path holonomy bi-submersion. Choose $\bt$-lifts $Y_i \in C^{\infty}(U;\ker d\bs)$ of the $X_i$. We can find a small neighborhood $\widetilde{G}_x$ of the identity in $G_x$ where the map $$\Delta \colon \tilde{G}_x \to U_x^x\;,\; exp_{\g_x}(\sum_{i=1}^l k_i[X_i]) \mapsto   exp_{(x,0)}(\sum_{i=1}^l k_i Y_i)$$ is a diffeomorphism onto its image. It turns out that the composition $\widetilde{\varepsilon}_x = \sharp \circ \Delta : \widetilde{G}_x \to H_x^x$ is independent of the choice of path holonomy bi-submersion and extends to a morphism of topological groups $$\varepsilon : G_x \to H_x^x$$

\section{Holonomy transformations}\label{section:geomhol}

In this section we extend the familiar notion of holonomy of a regular foliation to the singular case. 
We introduce and motivate the notion of holonomy transformation in \S 
%\ref{subsec:reg}-
\ref{subsec:sing}. In \S \ref{subsec:holtr}
we show how to associate holonomy transformations to a foliation: we obtain a map $\Phi \colon H \to \{\text{holonomy transformations}\}$,
generalizing what happens in the regular case. In \S \ref{sec:inj} we
show that this map is injective.
% conjecture that this map is injective, and provide some evidence for this claim. 

 \subsection{Overview of holonomy in the regular case}\label{subsec:reg}

A regular foliation of $M$ is given by an involutive subbundle of $TM$. Taking $\cF$ to be its module of  compactly supported sections   we obtain a foliation in the sense of Def. \ref{def:fol}.
One way to see the classical notion of holonomy of a regular foliation is as follows: Consider a path $\gamma$ from $x$ to $y$ lying in a leaf $L$ and fix transversals $S_{x}$ at $x$ and $S_y$ at $y$. The map $\gamma \colon [0,1] \to M$ can be extended to a continuous map $$\Gamma \colon S_x \times [0,1]  \to M$$ with $\Gamma|_{S_x \times \{0\}}=Id_{S_x}$, $\Gamma({S_x \times \{1\}})\subset S_y$ and such that $t \mapsto \Gamma (\tilde{x},t)$ is a curve in a leaf of $\cF$ for all $\tilde{x}\in S_x$. In particular, $\Gamma(x,t) = \gamma(t)$. The \emph{holonomy of $\gamma$} is then defined as the germ at $x$ of the map $$hol_{\gamma} : S_x \to S_y, \tilde{x} \mapsto \Gamma(\tilde{x},1).$$ We have:  
\begin{itemize}
 \item 
 %The germ $\langle h_\gamma\rangle $ of   $hol_{\gamma} \colon S_x \to S_y$ 
The  holonomy of $\gamma$ 
is independent of the choice of extension $\Gamma$, and  depends on  the homotopy class of $\gamma$ in $L$ rather than on $\gamma$ itself.
% along $\gamma$ depends only on the 
% homotopy class of $\gamma$.  
This gives rise to a map\\
 $hol \colon \{\text{homotopy classes of paths in $L$ from $x$ to $y$}\}
 \to GermAut_{\cF}(S_x;S_y)$.
 \item The holonomy of $\gamma$ can be linearised taking the derivative $d_xhol_{\gamma} \colon T_x S_x \to T_y S_y$. This gives rise to a map\\ $ \{\text{homotopy classes of paths in $L$ from $x$ to $y$}\} \to Iso(T_x S_x , T_y S_y)$ (linear holonomy).
\end{itemize}

\begin{remark}
The choice of transversal is immaterial: if $S'_x$ is another transversal at $x$, one obtains a canonical identification $S_x\cong S'_x$ near $x$ modifying slightly the above holonomy construction. If further we take a transversal $S'_y$ at $y$, then the diffeomorphism $S'_x \to S'_y$ obtained as the holonomy of $\gamma$ and the diffeomorphism $S_x \to S_y$ obtained above coincide upon applying the identifications.  
\end{remark}

\subsection{The singular case}\label{subsec:sing}

Let us consider a singular foliation $(M,\cF)$.  We make a first attempt to define a notion of holonomy, by
the following recipe  which makes use of the module $\cF$ rather than  just of the underlying partition of $M$ into leaves, and which clearly reduces to the notion of holonomy in the regular case. Our attempt will not be completely successful, but it is useful in that it motivates the definition of holonomy transformation.

Let $\gamma \colon [0,1] \to M$  be a curve from $x$ to $y$ lying in a leaf of $\cF$, and fix slices $S_x$ and $S_y$.
For every $t$ extend $\dot{\gamma}(t)$, the velocity of the curve at time $t$, to a vector field $Z^t$ lying in $\cF$, with the property that   $\Gamma \colon S_x \times [0,1]  \to M$ --  defined following the flow of  the time-dependent vector field $\{Z^t\}_{t\in[0,1]}$ starting at points of $S_x$ -- takes $S_x$ to $S_y$.
 Unlike
 the regular case, the resulting map  $S_x \to S_y, \tilde{x} \mapsto \Gamma(\tilde{x},1)$ depends on the choice of extension $\Gamma$. This can be seen looking at simple examples:
 
\begin{eps}\label{holFx}
\begin{enumerate}
\item Let $M=\R$, and $ {\widehat{\cF}}=\langle z\partial_{z}\rangle $: taking $x=y=0$  the transversal $S_0$ is a neighborhood of the origin in $M$. The constant path at the origin admits many extensions: $\Gamma \colon S_0 \times [0,1]  \to M, (\tilde{x},t) \mapsto \tilde{x}$ (the 
flow of the zero vector field),  
$\Gamma' \colon S_0 \times [0,1]  \to M, (\tilde{x},t) \mapsto e^t\tilde{x}$
 (the flow of  $z\partial_{z}$). They clearly give quite different germs of diffeomorphisms at the origin. 

\item Let $M=\R^2$ with coordinates $z,w$, and let $ {\widehat{\cF}}=\langle z\partial_{w}-w\partial_{z} \rangle$. 
Taking $x=y=0$  the transversal $S_0$ is a neighborhood of the origin in $M$. The constant path at the origin admits many extensions: $\Gamma \colon S_0 \times [0,1]  \to M, (\tilde{x},t) \mapsto \tilde{x}$ (the 
flow of the zero vector field),  or 
$\Gamma' \colon S_0 \times [0,1]  \to M, (\tilde{x},t) \mapsto R^t\tilde{x}$
where $R^t$ is rotation about the origin by the angle $t$
 (the flow of  $z\partial_{w}-w\partial_{z}$). They clearly give quite different germs of diffeomorphisms at the origin, namely, the identity and the rotation by one gradient. 
\end{enumerate}
\end{eps}
 
Now we show that different choices of $\Gamma$  induce  diffeomorphisms $S_x\to S_y$ which  differ by the flow of a vector field on $\cF$ vanishing at $x$.  %Even though we will not make use of this,  we provide  a proof, as it contains in simplified form some of the techniques we need to prove our later Thm. \ref{globalaction}. 
%Let $S_x, S_y$ two transversals at $x$ and $y$ respectively. 
In order to do this we introduce the following notation (see \S \ref{sec:fol} for the definitions of $I_x$ and $\cF(x)$):
\begin{itemize}
\item $Aut_{\cF}(M)$ is the subgroup of local diffeomorphisms of $M$ preserving $\cF$.
\item $exp(I_x \cF)$
is the space of   time-one flows of time-dependent vector fields in $I_x \cF$.  Analogously we define
$exp(\cF(x))$. Both are subgroups of $Aut_{\cF}(M)$. 
\item In particular, applying the above to a slice $S_x$ with the restricted foliation $\cF_{S_x}$, we have: $exp(I_x \cF_{S_x})$
is the space of  time-one flows of time-dependent vector fields in $I_x \cF_{S_x}$, and    $exp(\cF_{S_x})$ consists of
 time-one flows of time-dependent vector fields in $\cF_{S_x}=\cF_{S_x}(x)$. 
Often, abusing notation, we will use the same symbols to denote germs of diffeomorphisms.
\item $GermAut_{\cF}(S_x,S_y)$ is the space of germs at $x$ of 
diffeomorphisms from the foliated manifold $(S_x,\cF_{S_x})$ to the foliated manifold  $(S_y,\cF_{S_y})$. Equivalently, it consists of germs of local diffeomorphisms in $Aut_{\cF}(M)$ mapping $S_x$ to $S_y$, restricted to $S_x$.
\end{itemize}

%\icomment{From this point I change $exp(\cF(x))|_{S_x}$ and $exp(I_x \cF)|_{S_x}$ to \nbt{$exp(I_x \cF_{S_x})$} and \nbt{$exp(\cF_{S_x})$} everywhere in the text.}

\begin{prop}\label{holwelldef} Let $\gamma$ be a path as above.
The class of $\Gamma(\cdot,1)  \colon S_x    \to S_y$ in the quotient $GermAut_{\cF}(S_x,S_y)/ exp(\cF_{S_x})$ is independent of the chosen extension $\Gamma$.
\end{prop}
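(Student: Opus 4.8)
The plan is to measure the difference between two extensions $\Gamma,\Gamma'$ by a single diffeomorphism and then to show that, once restricted to $S_x$, this diffeomorphism lies in $exp(\cF_{S_x})$. Write $\{Z^t\}$ and $\{Z'^t\}$ for the time-dependent vector fields in $\cF$ underlying $\Gamma$ and $\Gamma'$, and let $\Phi_t,\Phi'_t$ be the corresponding flows, so that $\Gamma(\cdot,1)=\Phi_1|_{S_x}$ and $\Gamma'(\cdot,1)=\Phi'_1|_{S_x}$. Since $Z^t$ and $Z'^t$ both restrict to $\dot\gamma(t)$ along $\gamma$, uniqueness of integral curves gives $\Phi_t(x)=\gamma(t)=\Phi'_t(x)$.

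First I would introduce $W_t:=\Phi_t^{-1}\circ\Phi'_t$. A direct computation of $\frac{d}{dt}W_t$ shows that $W_t$ is the flow of the time-dependent vector field $\xi^t:=(\Phi_t^{-1})_*(Z'^t-Z^t)$. As $\cF$ is a module preserved by the flows $\Phi_t$, we have $\xi^t\in\cF$; and since $(Z'^t-Z^t)$ vanishes at $\gamma(t)=\Phi_t(x)$, the field $\xi^t$ vanishes at $x$. Moreover $W_t$ fixes $x$ for all $t$, and because both $\Phi_1$ and $\Phi'_1$ carry $S_x$ into $S_y$ (this is built into the definition of an extension), the germ $W_1=\Phi_1^{-1}\circ\Phi'_1$ preserves $S_x$ at $x$. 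From $\Phi'_1=\Phi_1\circ W_1$ we get $\Gamma'(\cdot,1)=\Gamma(\cdot,1)\circ(W_1|_{S_x})$, so the proposition reduces to showing $W_1|_{S_x}\in exp(\cF_{S_x})$.

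The \emph{main obstacle} is that $\xi^t$ lies in $\cF$ on $M$ but in general is \emph{not} tangent to $S_x$, so one cannot simply restrict it to the slice. To get around this I would invoke the splitting theorem (Prop. \ref{thm:splitting}) to identify a neighborhood of $x$ with $(I^k,TI^k)\times(S_x,\cF_{S_x})$, with projection $q$ onto the $S_x$-factor. Because $W_t(x)=x$, for $S_x$ small enough the whole family $W_t|_{S_x}$ stays in this chart, and I would set $R_t:=q\circ W_t|_{S_x}\colon S_x\to S_x$, so that $R_0=\id$ and $R_1=W_1|_{S_x}$ (the latter because $W_1$ preserves $S_x=\{s=0\}$). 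I would first check each $R_t$ is a local diffeomorphism: since $\xi^t\in\cF$ vanishes at $x$, its flow $W_t$ preserves the leaf through $x$, so $d_xW_t$ preserves $F_x$; in the decomposition $T_xM=F_x\oplus T_xS_x$ it is therefore block upper-triangular, whence its $T_xS_x$-block $d_xR_t$ (a diagonal block of an invertible triangular map) is invertible. Then $\eta^t:=(\tfrac{d}{dt}R_t)\circ R_t^{-1}$ is a well-defined time-dependent vector field on $S_x$, and since $dq$ kills the leafwise generators $\partial_{s_i}$ of $\cF_W$ and sends the horizontal generators to generators $X_j$ of $\cF_{S_x}$, one computes $\eta^t=\sum_j c_j^t X_j\in\cF_{S_x}$ with smooth coefficients $c_j^t$. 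Thus $R_t$ is a path of germs of $\cF_{S_x}$-automorphisms from $\id$ to $W_1|_{S_x}$ generated by $\eta^t\in\cF_{S_x}$, giving $W_1|_{S_x}=R_1\in exp(\cF_{S_x})$ and finishing the proof.
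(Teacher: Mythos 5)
Your proposal is correct and follows essentially the same route as the paper: the paper obtains your identity $\Phi'_t=\Phi_t\circ W_t$ with $W_t$ the flow of $(\Phi_t^{-1})_*(Z'^t-Z^t)$ by citing Posilicano's flow-composition formula, observes that this field vanishes at $x$, and then handles the non-tangency to the slice by invoking a variant of Lemma \ref{vfinS}, whose proof is exactly your projection argument $R_t=q\circ W_t|_{S_x}$ via the splitting theorem. The only difference is that you derive the flow identity and inline the slice-projection lemma rather than citing them.
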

The above quotient is given by the equivalence relation $$\psi\sim \hat{\psi}\Leftrightarrow \hat{\psi}^{-1} \circ \psi\in  
exp(\cF_{S_x})$$ 
\begin{proof}
Denote by $Z_t$ and $Z'_t$ the time-dependent vector fields in $\cF$ used to define the extensions $\Gamma$ and $\Gamma'$, and by $\phi_t$,$\phi'_t$ their flows.

For any two time-dependent vector fields $V=\{V_t\}_{t\in \R}$,$W=\{W_t\}_{t\in \R}$, the following relation between flows $\Phi_t$ at time $t$ holds \cite[eq. (2)]{Posi1988}:
\begin{equation}\label{Posieq}
\Phi_t(V+W)=\Phi_t(V)\circ \Phi_t[\{(\Phi_s(V))^{-1}_*W_s\}_{s\in \R}]
\end{equation} 
where the argument in the square bracket is the time-dependent vector field which, at time $s$, is obtained pushing forward $W_s$ via $(\Phi_s(V))^{-1}$. With $V_t:=Z_t, W_t:=Z'_t-Z_t$ the above formula reads
\begin{equation}\label{sortaBCH}
\phi'_t=\phi_t\circ (\text{time-}t \text{ flow of }\{(\phi_s)^{-1}_*(Z'_s-Z_s)\}_{s\in \R}).
\end{equation}
Since $\gamma$ is an integral curve of both $Z_s$ and $Z'_s$, we have $Z_s(\phi_s(x))=Z'_s(\phi_s(x))$ for all $s$.  Hence
$(\phi_s)^{-1}_*(Z'_s-Z_s)$ vanishes at $x$, for all $s$.  Therefore the time-$t$ flow of $\{(\phi_s)^{-1}_*(Z'_s-Z_s)\}_{s\in \R}$ lies in  $exp( \cF(x))$. This time-dependent vector field is not tangent to $S_x$, but we can obtain a vector field on $S_x$ and with the same time-1 flow on $S_x$ applying a trivial variation of Lemma \ref{vfinS} (simply replace $I_x\cF$ by $\cF(x)$ there).
\end{proof}

The notion of holonomy obtained in our attempt -- namely, the class associated to the path $\gamma$ by Proposition \ref{holwelldef} --  is unsatisfactory. Indeed, as it can be seen in examples \ref{holFx}, the ambiguity given by  $exp( \cF(x))$  is much too large to allow for linearization (i.e.,  we do not obtain a well-defined map $T_xS_x \to T_yS_y$ associated to the path $\gamma$). 
%We will show in the next subsections that to ensure this we need to quotient by the finer ideal $I_x\cF$ of quadratic vector fields vanishing at $x$ rather than the ideal $\cF(x)$. 
Hence we replace $\cF(x)$ by $I_x \cF$ and 
propose the following notion:
\begin{definition}\label{holfrom}
Let $(M,\cF)$ be a singular foliation, and $x,y\in M$ lying in the same leaf. 
%such that $\dot{\gamma}(t)=Y^t_{\gamma(t)}$ for all times $t$. 
Fix a transversal $S_{x}$ at $x$, as well as a transversal $S_y$ at $y$. A \emph{holonomy transformation  from $x$ to $y$} is an element of $$ 
\frac{GermAut_{\cF}(S_x, S_y)}{exp(I_x \cF_{S_x}) }.
$$ 
% Recall that $GermAut_{\cF}(S_x, S_y)$ and  $exp(I_x\cF)|_{S_x}$ were defined in \S \ref{subsec:chan}.
\end{definition}

\begin{remark}
The equivalence relation on $GermAut_{\cF}(S_x, S_y)$ in the above definition is finer than the one in Proposition \ref{holwelldef}, as $exp(I_x \cF_{S_x})$ is quite smaller than $exp(\cF_{S_x})$. As a consequence, holonomy transformations have a well-defined linearization (see \S \ref{sec:lintrafo}).
\end{remark}
  \begin{lemma}\label{lem:regcase}
If $x$ belongs to a regular leaf then $exp(I_x \cF_{S_x})$ is trivial.
\end{lemma}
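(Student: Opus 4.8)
The plan is to reduce the statement to the assertion that the restricted module $\cF_{S_x}$ vanishes identically on a sufficiently small slice $S_x$. Indeed, once $\cF_{S_x}=0$ is known, the ideal $I_x\cF_{S_x}$ is zero as well, every time-dependent vector field in $I_x\cF_{S_x}$ is the zero vector field, and hence $exp(I_x\cF_{S_x})=\{\id\}$ is trivial. So the whole lemma comes down to proving $\cF_{S_x}=0$.

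To establish $\cF_{S_x}=0$ I would combine regularity of the leaf with the Splitting Theorem. Write $k=\dim F_x$. By the definition of a regular leaf (item a) in \S\ref{sec:fol}) there is an open neighborhood $W$ of $L_x$ in which every leaf has dimension exactly $k$; after shrinking, I may assume the slice satisfies $S_x\subset W$, so that each point $x'\in S_x$ lies on a $k$-dimensional leaf. Now apply Prop. \ref{thm:splitting} at $x$ to identify a neighborhood of $x$ with the product $(I^k,TI^k)\times(S_x,\cF_{S_x})$. The leaf of the product foliation through a point $(s,x')$ is $I^k\times O_{x'}$, where $O_{x'}$ is the leaf of $\cF_{S_x}$ through $x'$, and so it has dimension $k+\dim O_{x'}$. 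Since this dimension must equal $k$ for every $x'\in S_x$, I conclude $\dim O_{x'}=0$ for all $x'$ in the slice. Translating this into the module: the tangent space of $O_{x'}$ is spanned by the values $\{Y(x'):Y\in\cF_{S_x}\}$, so $\dim O_{x'}=0$ forces $Y(x')=0$ for every $Y\in\cF_{S_x}$ and every $x'\in S_x$; as this holds at each point of the slice, each such $Y$ vanishes identically, giving $\cF_{S_x}=0$.

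The subtle point, and the place where the hypothesis genuinely enters, is that one must use the full regularity condition (constant leaf dimension on a whole neighborhood) rather than merely the pointwise statement $\g_x=0$. The latter only yields $\cF_{S_x}/I_x\cF_{S_x}=(\cF_{S_x})_x=0$, i.e. $\cF_{S_x}=I_x\cF_{S_x}$, which is insufficient: the foliation $\cF=\langle z\partial_z\rangle$ of Examples \ref{holFx}(1) already satisfies $(\cF_{S_0})_0=0$ at the origin yet has nontrivial $exp(I_0\cF_{S_0})$ — precisely because the origin is a \emph{singular} leaf there. It is the neighborhood condition, forcing $\dim O_{x'}=0$ for all nearby $x'$ and not only at $x$, that upgrades $(\cF_{S_x})_x=0$ to $\cF_{S_x}=0$ and makes the argument go through. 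I expect no further obstacle once this is handled, since the remaining steps are the direct dimension count and the elementary passage from vanishing leaf dimensions to vanishing of all generators.
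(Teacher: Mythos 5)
Your proof is correct and follows essentially the same route as the paper's (one-line) argument: after shrinking the slice, regularity forces $\cF_{S_x}$ to be the trivial foliation, whence $exp(I_x\cF_{S_x})$ is trivial. You simply supply the details of the dimension count via the splitting theorem, and your remark on why the pointwise condition $\g_x=0$ would not suffice is accurate.
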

\begin{proof}
Restricting  the slice $S_x$ if necessary, $S_x$ 
intersects transversely the leaves of $\cF$, so $\cF_{S_x}$ is the trivial singular foliation.
\end{proof}

Consider now the case of regular foliations. Lemma \ref{lem:regcase} says that the holonomy of a path is a holonomy transformation, since $exp(I_x \cF_{S_x})$  is trivial. Further, since an element of the holonomy groupoid $H$ is exactly a class of paths in $(M,\cF)$ having the same holonomy (see \cite[Cor. 3.10]{AndrSk}),   there is a canonical injective map $$H \to \{\text{holonomy transformations}\}.$$
This is the point of view that we carry over to the singular case:
 while holonomy transformations  are certainly not associated to (classes of) paths, in \S \ref{subsec:holtr} we will see that 
%the holonomy groupoid detects holonomy transformations  between points of $M$. Furthermore,  The price to pay is that the  
 they are associated   to elements of the holonomy groupoid $H$.

\subsection{The ``action'' by Holonomy Transformations}\label{subsec:holtr}

The following theorem assigns a holonomy transformation to each element of  the holonomy groupoid. It is the main result of the whole of  \S \ref{section:geomhol}. Its proof is rather involved and   will be given in Appendix \ref{holproofs}.   

\begin{thm}\label{globalaction} Let $x, y \in (M,\cF)$ be points in the same leaf $L$, and fix  transversals $S_x$ at $x$ and $S_y$ at $y$. 
Then there is a well defined map 
\begin{align}\label{Phixy}
\Phi_x^y \colon H_x^y \rightarrow \frac{GermAut_{\cF}(S_x, S_y)}{exp(I_x \cF_{S_x}) },\quad h \mapsto \langle \tau\rangle.   
\end{align}

Here $\tau$ is defined as follows, given $h \in H_x^y$:  \begin{itemize}
\item take any bi-submersion $(U,\bt,\bs)$ in the path-holonomy atlas with a point $u\in U$ satisfying $[u]=h$,
\item  take any section $\bar{b} \colon S_x \to U$ through $u$ of $\bs$ such that $(\bt\circ \bar{b})(S_x)\subset S_y$,
% and $d_x b(T_x L)=0$, 
\end{itemize}
and define $\tau:=\bt\circ \bar{b} \colon S_x \to S_y$.
% as the local diffeomorphism of $M$ carried by the bisection $b$. 
\end{thm}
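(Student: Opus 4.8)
The plan is to establish well-definedness by peeling off the three choices hidden in the construction of $\tau$: the section $\bar b$ (with $u$ and $U$ fixed), the point $u\in U$ representing $h$ (with $U$ fixed), and the bi-submersion $(U,\bt,\bs)$ itself. I would dispose of the last two first, because they reduce cleanly to the first. If $u,u'$ satisfy $[u]=[u']=h$, then by \cite[Cor. 2.11(b)]{AndrSk} there is a morphism of bi-submersions $g$, defined near $u$, with $g(u)=u'$; since $\bs\circ g=\bs$ and $\bt\circ g=\bt$, for any section $\bar b$ through $u$ as in the statement the composite $g\circ\bar b$ is again such a section, now through $u'$, producing the \emph{identical} map $\tau=\bt\circ\bar b=\bt\circ(g\circ\bar b)$. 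The same argument applies verbatim when $u\in U$ and $u'\in U'$ lie in different bi-submersions, using the connecting morphism $g\colon W\to U'$ furnished by the definition of $H$. Thus both independences follow once I know that any two admissible sections through a single point $u$ yield the same class.

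The core is therefore the independence of the section. Fix $u\in U$ with $\bs(u)=x$, $\bt(u)=y$, and set $P:=\bs^{-1}(S_x)\cap\bt^{-1}(S_y)\subset U$. A section as in the statement is exactly a section of $\bs|_P\colon P\to S_x$ through $u$, so the first thing I would verify is that $\bs|_P$ is a submersion near $u$; this is a transversality computation from the submersion axioms for $\bt,\bs$ together with the fact that $S_x,S_y$ are slices. Granting this, in a local trivialization of $\bs|_P$ the admissible sections through $u$ are graphs of maps $S_x\to\R^k$ vanishing at $x$, hence form a convex set; so any two such sections $\bar b_0,\bar b_1$ can be joined by a smooth family $\bar b_\lambda$, $\lambda\in[0,1]$, with $\bar b_\lambda(x)=u$ throughout. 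Put $\tau_\lambda:=\bt\circ\bar b_\lambda\colon S_x\to S_y$.

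It remains to show that the isotopy $\{\tau_\lambda\}$ from $\tau_0$ to $\tau_1$ is generated by a time-dependent vector field $W_\lambda$ lying in $I_y\cF_{S_y}$. Writing $\zeta_\lambda:=\frac{d}{d\lambda}\bar b_\lambda$, this is an $\bs$-vertical vector field (as $\bs\circ\bar b_\lambda$ is independent of $\lambda$), so $\zeta_\lambda\in C^\infty_c(U;\ker d\bs)\subset\bs^{-1}(\cF)=\bt^{-1}(\cF)$; pushing it forward along $\bt$ and reparametrizing by $\tau_\lambda^{-1}$ produces $W_\lambda$. Expanding $\zeta_\lambda$ in a local generating set of $\bt$-projectable fields exhibits $W_\lambda$ as a $C^\infty$-combination of images of elements of $\cF$, hence $W_\lambda\in\cF$; since $(\tau_\lambda)(S_x)\subset S_y$ the field $W_\lambda$ is tangent to $S_y$, so in fact $W_\lambda\in\cF_{S_y}$ (invoking the variant of Lemma \ref{vfinS} to replace it, if necessary, by an honest vector field on $S_y$ with the same flow). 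Finally $W_\lambda$ vanishes at $y$: since $\bar b_\lambda(x)=u$ is constant, $\zeta_\lambda(u)=0$ and hence $\frac{d}{d\lambda}\tau_\lambda(x)=0$. Thus $W_\lambda\in I_y\cF_{S_y}$. Integrating, $\tau_1=\Xi\circ\tau_0$ with $\Xi\in exp(I_y\cF_{S_y})$, and conjugating the generator by the foliation-preserving germ $\tau_0$ (which sends $x$ to $y$) carries $I_y\cF_{S_y}$ into $I_x\cF_{S_x}$, so $\tau_0^{-1}\circ\tau_1\in exp(I_x\cF_{S_x})$, i.e. $\langle\tau_0\rangle=\langle\tau_1\rangle$.

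The main obstacle is exactly this last step: Proposition \ref{holwelldef} already supplies such an isotopy argument, but it only controls the ambiguity up to $exp(\cF_{S_x})$, whereas here I must sharpen it to the much smaller group $exp(I_x\cF_{S_x})$. The sharpening hinges on the vanishing of $W_\lambda$ at the basepoint, which in turn rests on all the sections passing through the \emph{same} point $u$; this is precisely what the quotient by morphisms in $H$ buys us and what is unavailable for mere homotopy classes of paths. The remaining technical points, which I would treat carefully but expect to be routine, are the submersivity of $\bs|_P$ and the passage from a vector field in $\cF$ tangent to $S_y$ to one genuinely supported on the slice (Lemma \ref{vfinS}).
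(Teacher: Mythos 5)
Your reduction of the independence of $u$ and $U$ to the independence of the section (via a connecting morphism of bi-submersions) is valid and matches the paper's first step. Your treatment of the core step is genuinely different from the paper's: you interpolate linearly between the two sections inside the space of sections of $\bs|_P$ through the fixed point $u$ (the submersivity of $\bs|_P$ being the content of the claims in Lemma \ref{bisexists}) and differentiate in the interpolation parameter, whereas the paper extends both sections to bisections, retracts them to the zero bisection along the path-holonomy coordinates (Lemma \ref{epsbmany}), and compares the two resulting flows using Posilicano's formula \eqref{Posieq}. Your route is more direct and avoids \eqref{Posieq}, at the cost of having to control the generator of the isotopy $\tau_\lambda$ abstractly rather than reading off coefficients from explicit $\R^n$-coordinates.

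There is, however, a genuine gap at exactly the decisive point. You infer ``$W_\lambda$ vanishes at $y$, thus $W_\lambda\in I_y\cF_{S_y}$''. This inference is false in general: a vector field of $\cF$ vanishing at $y$ lies in $\cF(y)$, not necessarily in $I_y\cF$, and the quotient $\g_y=\cF(y)/I_y\cF$ is nonzero precisely at singular points. Distinguishing these two submodules is the entire content of the theorem --- it is what separates the fine quotient of Definition \ref{holfrom} from the coarse one of Proposition \ref{holwelldef} --- so pointwise vanishing of $W_\lambda$ cannot suffice. The gap is fillable along your own lines: extend $\zeta_\lambda$ to a section of the honest vector bundle $\ker d\bs$; since it vanishes at $u$, its coefficients in any local frame of $\ker d\bs$ vanish at $u$, so the extension lies in $I_u\cdot C^\infty(U;\ker d\bs)$; combining this with the bi-submersion axiom $C^\infty_c(U;\ker d\bs)\subset\bt^{-1}(\cF)$ yields an expansion $\zeta_\lambda=\sum_j h_j\tilde X_j$ with $\tilde X_j$ $\bt$-projectable onto $X_j\in\cF$ and $h_j\in I_u$, whence $W_\lambda=\sum_j(h_j\circ\bar b_\lambda\circ\tau_\lambda^{-1})X_j$ has coefficients in $I_y$. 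This is the step the paper performs concretely: in path-holonomy coordinates the coefficients of the generating vector fields are literally the components $b_i^\alpha$ of the bisections, and their agreement at $x$ (because both bisections pass through $u$) makes the vanishing of the coefficients, not merely of the vector, manifest. With this repair, and the routine passage to the slice via the variant of Lemma \ref{vfinS}, your argument goes through.
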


\begin{cor}\label{groidmap}
For every point $x\in (M,\cF)$ fix a slice $S_x$ transverse to the foliation.
% Then $$\cup_{x,y}\frac{GermAut_{\cF}(S_x, S_y)}{exp(I_x \cF)|_{S_x}}$$ is a set-theoretic  groupoid over $M$. 
The maps $\Phi_x^y$ of Thm. \ref{globalaction}  
assemble to a groupoid morphism 
\begin{center}
\fbox{\begin{Beqnarray*}
\Phi \colon H\to \cup_{x,y}\frac{GermAut_{\cF}(S_x, S_y)}{exp(I_x \cF_{S_x}) }
 \end{Beqnarray*}}
\end{center}
where the union is taken over all pairs of points lying in the same leaf.
 \end{cor}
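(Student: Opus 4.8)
The plan is to establish two things: that the stated target genuinely carries a groupoid structure, and that $\Phi$ is compatible with the partial multiplications and units; compatibility with inversion will then be automatic. I would take as objects the chosen pointed slices $(S_x,x)$ and as arrows $x\to y$ the holonomy transformations in $GermAut_{\cF}(S_x,S_y)/exp(I_x\cF_{S_x})$, with composition $\langle\tau\rangle\circ\langle\sigma\rangle:=\langle\tau\circ\sigma\rangle$ for $\langle\sigma\rangle$ from $x$ to $y$ and $\langle\tau\rangle$ from $y$ to $z$. The one point needing an argument is well-definedness of this composition. Replacing $\sigma$ by $\sigma\circ g$ with $g\in exp(I_x\cF_{S_x})$ and $\tau$ by $\tau\circ g'$ with $g'\in exp(I_y\cF_{S_y})$, one computes
\[
(\tau\circ\sigma)^{-1}\circ(\tau\circ g'\circ\sigma\circ g)=\sigma^{-1}\circ g'\circ\sigma\circ g ,
\]
so it suffices to know that conjugation by the foliation-preserving germ $\sigma\colon(S_x,x)\to(S_y,y)$ carries $exp(I_y\cF_{S_y})$ into $exp(I_x\cF_{S_x})$. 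This holds because a time-dependent vector field $\sum_i f_iX_i$ with $f_i\in I_y$ and $X_i\in\cF_{S_y}$ pulls back under $\sigma$ to $\sum_i(f_i\circ\sigma)\,\sigma^*X_i$, where $f_i\circ\sigma\in I_x$ since $\sigma(x)=y$ and $\sigma^*X_i\in\cF_{S_x}$ since $\sigma$ preserves the foliation; as $exp(I_x\cF_{S_x})$ is a subgroup of $Aut_{\cF}$, the product lies in it. The same conjugation argument shows $\langle\sigma\rangle\mapsto\langle\sigma^{-1}\rangle$ is a well-defined inverse, so the target is indeed a groupoid.

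For the units, the unit of $H$ at $x$ is represented by $(x,0)$ in a path-holonomy bi-submersion $U\subset M\times\R^n$; choosing the section $\bar{b}(\tilde x)=(\tilde x,0)$ of $\bs$ gives $\bt\circ\bar{b}=\id_{S_x}$, so $\Phi_x^x$ sends it to $\langle\id\rangle$. For composition, given composable $h\in H_x^y$ and $h'\in H_y^z$ I would represent them by $u\in U$ and $v\in V$ with $U,V$ in the path-holonomy atlas, and pick sections $\bar{b},\bar{c}$ as in Thm.~\ref{globalaction}, writing $\sigma:=\bt_U\circ\bar{b}$ and $\tau:=\bt_V\circ\bar{c}$, so $\Phi_x^y(h)=\langle\sigma\rangle$ and $\Phi_y^z(h')=\langle\tau\rangle$. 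The product $h'h$ is represented by $(v,u)$ in the composite bi-submersion $V\circ U=V\times_{\bs_V,\bt_U}U$, which again belongs to the path-holonomy atlas. The map $\bar{d}(\tilde x):=(\bar{c}(\sigma(\tilde x)),\bar{b}(\tilde x))$ lands in the fibre product, since $\bs_V(\bar{c}(\sigma(\tilde x)))=\sigma(\tilde x)=\bt_U(\bar{b}(\tilde x))$; it is a section of $\bs_{V\circ U}$ through $(v,u)$ satisfying the hypotheses of Thm.~\ref{globalaction}, and $\bt_{V\circ U}\circ\bar{d}=\tau\circ\sigma$. Because $\Phi_x^z(h'h)$ is independent of all such choices, I conclude $\Phi_x^z(h'h)=\langle\tau\circ\sigma\rangle=\Phi_y^z(h')\circ\Phi_x^y(h)$, and compatibility with inverses is then formal.

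The genuinely substantive input is Thm.~\ref{globalaction} itself, which makes each $\Phi_x^y$ independent of the bi-submersion, the representative point, and the section; granting this, the only real verification here is the conjugation invariance of the subgroups $exp(I_x\cF_{S_x})$ under foliation-preserving germs, which is where I expect the main (though modest) effort to lie. The multiplicativity of $\Phi$ is then essentially bookkeeping: the freedom built into Thm.~\ref{globalaction} lets me evaluate $\Phi_x^z(h'h)$ on the adapted section $\bar{d}$ assembled from $\bar{b}$ and $\bar{c}$, on which the composition identity is manifest.
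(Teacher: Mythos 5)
Your proposal is correct and follows essentially the same route as the paper: one first checks that the target is a set-theoretic groupoid via the conjugation-invariance of the subgroups $exp(I_x \cF_{S_x})$ under foliation-preserving germs (the paper phrases this as $\phi\psi\phi^{-1}\in exp(I_y\cF_{S_y})$ via pushforward, you via pullback), and then verifies multiplicativity by assembling from $\bar{b}$ and $\bar{c}$ exactly the section $z\mapsto(\bar{c}(\sigma(z)),\bar{b}(z))$ of the composite bi-submersion, invoking the choice-independence of Thm.~\ref{globalaction}. The only additions are the (trivial but harmless) explicit check on units and the remark that inverses follow formally.
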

\begin{remark}
Heuristically, one can think of $\Phi$ as an action of the groupoid $H$ on the union of all slices.
\end{remark}
\begin{proof}
Clearly $\cup_{x,y}{GermAut_{\cF}(S_x, S_y)}$ is a set-theoretic groupoid over $M$. For every $\psi\in {exp(I_x \cF_{S_x}) }$ and $\phi \in 
{GermAut_{\cF}(S_x, S_y)}$ we have $\phi \psi \phi^{-1}\in {exp(I_y \cF_{S_y})}$. This follows from the fact that for any time-dependent vector field $Y$ on $S_x$ one has 
$\phi\circ exp(Y)\circ \phi^{-1}=exp((\phi_*Y))$, and that if $Y\in I_x\cF_{S_x}$ then $\phi_*Y\in I_y\cF_{S_y}$. Hence the target of $\Phi$ is a set-theoretic  groupoid over $M$.

We prove that $\Phi$ is a groupoid morphism,  as follows. If $u\in U$ and $v\in V$ are points of bi-submersions with $\bs_{U}(u)=\bt_V(v)$, and $\bar{b}_U,\bar{b}_V$ are as in Thm. \ref{globalaction}, then $\bar{b}_U\circ \bar{b}_V \colon S_{\bs_V(v)}\to U\circ V$ is a section through $(u,v)$ and it is given by $z\mapsto (\bar{b}_U(\bt_V(\bar{b}_V(z))),\bar{b}_V(z))$. So applying $\bt_{U\circ V}$ we obtain $(\bt_U\circ \bar{b}_U)\circ(\bt_V\circ \bar{b}_V)$.
\end{proof}

\begin{remark}\label{rem:holtransf}
\begin{enumerate}
\item If $x$ belongs to a regular leaf then Thm. \ref{globalaction} recovers the usual notion of holonomy for regular foliations, by the comments at the very end of \S \ref{subsec:sing}. For singular foliations, a point of the holonomy groupoid 
$H$ does not determine a path in $M$, nor a homotopy class of paths. 

\item The above result was inspired by \cite[Prop. 3.1]{Fernandes}, which we can not apply directly  since in general there is no Lie algebroid (defined over $M$) integrating to the holonomy groupoid $H$. It differs from \cite[Prop. 3.1]{Fernandes} in that we do not make any choices (the price to pay for this is that we have to quotient by $exp(I_x \cF_{S_x})$ ) and that the domain of the above map $\Phi$ is not a set of Lie algebroid paths but rather $H$.

\item\label{Wx}  The use of slices in Theorem \ref{globalaction} is unavoidable, as in general there exists no continuous groupoid morphism
 $$ H \to \cup_{x,y}\frac{GermAut_{\cF}(W_x;W_y)}{exp(I_x \cF_{W_x}) },$$ where $W_x, W_y$ are suitable open neighborhoods of $x, y$ respectively,
and 
where the union is taken over all pairs of points lying in the same leaf.  
This   is already apparent in the case of regular foliations. Take for example the M\"obius strip $M$ with the one-leaf foliation, so $H=M\times M$, and assume that such a  continuous groupoid morphism existed.  On one hand, the morphism property implies that the point $(x,x)\in H$ is mapped to the class of $Id_{W_x}$. On the other hand, considering $(x,y)$ as $y$ varies along a non-contractible  loop in $M$ starting and ending at $x$, by continuity  the point $(x,x)$  would be mapped to an orientation-reversing diffeomorphism of $W_x$, which can not lie in the class of $Id_{W_x}$ since elements of $exp(I_x \cF_{W_x})$  are orientation-preserving. Hence we obtain a contradiction.
\end{enumerate}
\end{remark}

The following examples for Theorem \ref{globalaction} show the dependence from the choice of module $\cF$.  

\begin{eps}\label{exglo}
\begin{enumerate}[(i)]
\item Let $M=\R$ and $ {\widehat{\cF}}$ be generated by $X:=z {\partial_z}$. We consider $0\in M$; a transversal $S_0$ is just a neighborhood of $0$ in $M$. Consider the path-holonomy bi-submersions  $U\subset M\times \R \rightrightarrows M$ defined by the generator $X$ and fix $u:=(\lambda,0)\in U$ where $\lambda$ is a real number. In order to compute the image of $[u]$ under $\Phi_0^0$ we can take any bisection passing through it. The  bisection $b \colon S_0\to \R$ with constant value $\lambda$ carries the diffeomorphism $z \mapsto exp_z(\lambda X)=e^{\lambda}z$. Hence $\Phi_0^0(
[(\lambda,0)]$ is the class of $y\mapsto e^{\lambda}y$ in $GermAut_{\cF}(S_0, S_0)/exp(I_0 X)$.  
\item Let again $M=\R$ and but now let $ {\widehat{\cF}}$ be generated by $X:=z^2 {\partial_z}$. A slice $S_0$ is just a neighborhood of $0$ in $M$. 

Consider the bi-submersions  $U\subset M\times \R \rightrightarrows M$ defined by the generator $X$ and fix $u:=(\lambda,0)\in U$ where $\lambda$ is a real number. The  bisection $b \colon S_0\to \R$ with constant value $\lambda$  carries the diffeomorphism $$z \mapsto exp_z(\lambda X)= 
\frac{z}{1-\lambda z}=z+\lambda z^2+\lambda^2 z^3+\cdots.$$
Hence $\Phi_0^0[(\lambda,0)]$ is the class of this diffeomorphism in $GermAut_{\cF}(S_0, S_0)/exp(I_0 X)$.
%Now $\Psi_0^0[(\lambda,0)] \in  Iso(T_0M, T_0M)$ is given by $Id_{T_0M}$.

\item
%\label{exlin} 
We consider  the $S^1$-action on $M=\R^2$ by rotations. Let $U\subset M\times \R$ be the bi-submersion generated by  $y\partial_{x}-x\partial_{y}$.
Any bisection  through the origin   is given by a map  $\lambda \colon \R^2 \rightarrow \R$ (a  section of the source map $\bs$).
% \colon \R^2 \rightarrow \R$). 
It induces the diffeomorphism of $\R^2$   given by 
$$ \left(\begin{array}{cc}  x  \\y \end{array}\right)
 \mapsto \left(\begin{array}{cc} cos(\lambda(x,y)) & -sin(\lambda(x,y))\\
 sin(\lambda(x,y)) & cos(\lambda(x,y)) 
   \end{array}\right)
\left(\begin{array}{cc}  x  \\y \end{array}\right),
 $$
which represents the  class $\Phi_0^0[(\lambda_0,0)]$ for $\lambda_0:=\lambda(0,0)$. 
  \end{enumerate}
\end{eps} 

 Above we fixed a choice of transversals $S_x$ and $S_y$. This choice is immaterial due to the next lemma, whose proof uses technicalities that we give in Appendix \ref{subsec:chan}.

\begin{lemma}\label{immaterial}
 Let $x,y$ be  points in a foliated manifold $(M,\cF)$ lying in the same leaf. Choose two transversals $S^i_x,$ at $x$ and 
two transversals $S^i_y$ at $y$ ($i=1,2$).
 Then the maps $$^i\Phi_x^y \colon H_x^y \rightarrow GermAut_{\cF}(S_x^i,S_y^i)/exp(I_x \cF_{S^i_x}) $$ defined in Theorem \ref{globalaction} coincide upon the canonical identification  given by eq. 
\eqref{identtransversals}.
\end{lemma}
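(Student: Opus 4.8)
The plan is to reduce the statement to the independence of $\Phi_x^y$ from the auxiliary choices already granted by Theorem~\ref{globalaction}, applied to one cleverly chosen composite bi-submersion. First I would pin down the canonical identifications underlying eq.~\eqref{identtransversals}. For a point $x$ and two slices $S_x^1, S_x^2$, the canonical germ of a foliation-preserving diffeomorphism $c_x \colon S_x^1 \to S_x^2$ fixing $x$ is, by construction, the holonomy transformation that Theorem~\ref{globalaction} attaches to the unit $1_x \in H_x^x$ when the source slice is $S_x^1$ and the target slice is $S_x^2$: one picks a path-holonomy bi-submersion $U_0$ with a point $u_0$ satisfying $\bt(u_0)=\bs(u_0)=x$ and $[u_0]=1_x$, a section $\bar{b}_x \colon S_x^1 \to U_0$ through $u_0$ with $(\bt\circ\bar{b}_x)(S_x^1)\subset S_x^2$, and sets $c_x:=\bt\circ\bar{b}_x$. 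Defining $c_y \colon S_y^1\to S_y^2$ in the same way, eq.~\eqref{identtransversals} is the bijection $\Xi(\langle\tau\rangle)=\langle c_y\circ\tau\circ c_x^{-1}\rangle$, and what must be shown is precisely that ${}^2\Phi_x^y=\Xi\circ{}^1\Phi_x^y$.

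With this in hand, the core of the argument is short. Fix $h\in H_x^y$ and write it as the composite $1_y\cdot h\cdot 1_x$ in $H$. Concretely, take a path-holonomy bi-submersion $U$ and $u\in U$ with $[u]=h$, together with bi-submersions $U_0$ (with a point $u_0$ representing $1_x$, now taken with source slice $S_x^2$ and target slice $S_x^1$, so that it carries $c_x^{-1}$) and $U_1$ (with a point $u_1$ representing $1_y$, source slice $S_y^1$ and target slice $S_y^2$, carrying $c_y$). One checks composability: $\bs_U(u)=x=\bt_{U_0}(u_0)$ and $\bs_{U_1}(u_1)=y=\bt_U(u)$, so $(u_1,u,u_0)$ is a point of $U_1\circ U\circ U_0$ representing $1_y\cdot h\cdot 1_x=h$. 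By the computation in the proof of Corollary~\ref{groidmap}, the composite section then carries $c_y\circ\tau\circ c_x^{-1}$, where $\tau\colon S_x^1\to S_y^1$ is the representative of ${}^1\Phi_x^y(h)$ obtained from $U$ and $u$ as in Theorem~\ref{globalaction}. Since $U_1\circ U\circ U_0$ lies in the path-holonomy atlas and has source slice $S_x^2$ and target slice $S_y^2$, the independence of $\Phi_x^y$ from the bi-submersion and section (Theorem~\ref{globalaction}) yields ${}^2\Phi_x^y(h)=\langle c_y\circ\tau\circ c_x^{-1}\rangle=\Xi\bigl({}^1\Phi_x^y(h)\bigr)$, which is the claim.

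The genuinely technical work lies in setting up the canonical identification $c_x$, \ie eq.~\eqref{identtransversals} itself, and is what I would relegate to Appendix~\ref{subsec:chan}: that a section $\bar{b}_x$ as above exists, that $c_x$ is a germ of diffeomorphism fixing $x$ and preserving $\cF$, and that it is well defined modulo $exp(I_x\cF_{S_x^1})$ (this last point being Theorem~\ref{globalaction} applied to $1_x$). Granting this, the remaining verifications are routine and parallel the proof of Corollary~\ref{groidmap}: that $\Xi$ descends to the quotients and is independent of the chosen representatives of $c_x,c_y$ (because $c_x$ is foliation-preserving and fixes $x$, its push-forward carries $I_x\cF_{S_x^1}$ into $I_x\cF_{S_x^2}$), and that the holonomy transformation of $1_x$ with the roles of $S_x^1,S_x^2$ exchanged is indeed the inverse germ $c_x^{-1}$ (since $\Phi$ sends units to $\langle Id\rangle$ and is compatible with composition). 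The main obstacle is therefore not the functoriality argument above but the construction and smoothness of the transversal identifications; once those appendix technicalities are fixed, the lemma follows formally.
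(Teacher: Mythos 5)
Your proof is correct, but it takes a genuinely different route from the paper's. The paper works with a \emph{single} bi-submersion $U$ and a single point $u$ with $[u]=h$, takes two sections $\bar{b}^1,\bar{b}^2$ adapted to the two pairs of slices, extends them to bisections $b^i$ with $(d_xb^i)(T_xL)=0$, and then shows by direct computation that the carried diffeomorphisms satisfy $\phi^2=\psi^{21}_y\circ\phi^1\circ\psi^{12}_x$ with $\psi^{12}_x\in exp(I_x\cF)$ and $\psi^{21}_y\in exp(I_y\cF)$; the membership $\psi^{12}_x\in exp(I_x\cF)$ is extracted from the ``claim'' inside the proof of Theorem \ref{globalaction} (that two bisections through the same point carry diffeomorphisms differing by $exp(I_x\cF)$) together with a push-forward argument, and the conclusion then follows from Lemma \ref{ideslices}. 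You instead exploit the groupoid structure: you write $h=1_y\cdot h\cdot 1_x$, realize it on the composite bi-submersion $U_1\circ U\circ U_0$, and invoke the already-established independence of $\Phi_x^y$ from the bi-submersion and the section. Your approach buys conceptual clarity — it exposes the pleasant fact that the change-of-transversal identification is itself the holonomy transformation of the unit — and it avoids reopening the technical computation from Appendix \ref{holproofs}. The one bridge you should make explicit is the claim that your $\Xi(\langle\tau\rangle)=\langle c_y\circ\tau\circ c_x^{-1}\rangle$ really \emph{is} the identification of eq. \eqref{identtransversals}, since the lemma asks for coincidence under that specific identification: eq. \eqref{identtransversals} conjugates by restrictions of flows $\psi^{12}_x\in exp(I_x\cF)$, $\psi^{21}_y\in exp(I_y\cF)$, whereas your $c_x$ is defined as the holonomy class of $1_x$. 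These agree because, by Lemma \ref{lem:notes}, the point $(x,0)$ of a path-holonomy bi-submersion carries any $\psi\in exp(I_x\cF)$, so the restriction $\psi^{12}_x|_{S^2_x}$ is a representative of the unit's holonomy class from $S^2_x$ to $S^1_x$; with that observation (and the canonicity statement of Lemma \ref{ideslices}) your argument closes, with no circularity since neither Theorem \ref{globalaction}, Corollary \ref{groidmap}, nor Lemma \ref{lem:notes} relies on the present lemma.
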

\begin{proof}
Let $h\in H_x^y$, and take any bi-submersion $(U,\bt,\bs)$ with a point $u\in U$ satisfying $[u]=h$. For $i=1,2$   take any section $\bar{b}^i \colon S^i_x \to U$ through $u$ of $\bs$ such that $(\bt\circ \bar{b}^i)(S^i_x)\subset S^i_y$, and
 extend it to a bisection $b^i$ of $U$ such that 
$(d_xb^i)(T_xL)=0$, where $L$ denotes the leaf through $x$.
Define the local diffeomorphisms $\phi^i:=\bt\circ {b^i}$ from a neighborhood of $x$ to a neighborhood of $y$.
%%%
By Lemma \ref{s1s2} there exists $\psi^{21}_y \in exp(I_y\cF)$ mapping $S^1_y $ to $S^2_y$. Define $\psi^{12}_x$ by  requiring 
\begin{equation}\label{psieq}
\phi^2=\psi^{21}_y  \circ \phi^1 \circ \psi^{12}_x.
\end{equation}
 This equation can be reformulated as 
\begin{equation}
(\phi^2)^{-1}\circ \left(\phi^1 \circ (\phi^1)^{-1}\right )\circ \psi^{21}_y  \circ \phi^1=(\psi^{12}_x)^{-1}.
\end{equation}
Now $(\phi^2)^{-1}\circ  \phi^1 \in \exp(I_x \cF)$ by the claim in the proof of Theorem \ref{globalaction} (\cf Appendix \ref{holproofs}). Further $(\phi^1)^{-1} \circ \psi^{21}_y  \circ \phi^1$ also lies in  $\exp(I_x \cF)$. Indeed, it is   the time-1 flow of the pushforward by $(\phi^1)^{-1}$ of the time-dependent vector field in $I_y \cF$  defining $\psi^{21}_y$, and  the pushforward by $(\phi^1)^{-1}$ maps $I_y \cF$ to $I_x \cF$. Hence we conclude that $(\psi^{12}_x)^{-1}$, and therefore $\psi^{12}_x$, lies in  $\exp(I_x \cF)$.

Hence we found $\psi^{12}_x$ mapping   $S_x^2$ to $S_x^1$ lying in $exp(I_x \cF)$, and 
$\psi^{21}_y $ mapping 
 $S_y^1$ to $S_y^2$ lying in $exp(I_y \cF)$, satisfying  eq. \eqref{psieq}. 
 From Lemma \ref{ideslices} it is clear that, under the identification given by eq. \eqref
{identtransversals}, the class  $[\phi^1|_{S_x}]$ is identified with the class  $[\phi^2|_{S_x}]$.
\end{proof}

\subsection{Injectivity}\label{sec:inj} 

Let $(M,\cF)$ be a singular foliation.  We show that the morphism of groupoids $\Phi$ defined in Cor. \ref{groidmap} is injective. This allows to view $H$ as a subset of the set of holonomy transformations, providing a geometric interpretation for the elements of $H$.
Being $\Phi$ a morphism a groupoids covering $Id_M$, it suffices to show for every $x\in M$ that the map 
%$\Phi_x^y \colon H_x^y \rightarrow \frac{GermAut_{\cF}(S_x, S_y)}{exp(I_x \cF)|_{S_x}}$ 
$$\Phi_x^x \colon  H_x^x \rightarrow \frac{GermAut_{\cF}(S_x, S_x)}{exp(I_x \cF_{S_x}) }$$
defined in eq. \eqref{Phixy} is injective.

For the sake of exposition, we first consider two extreme special cases:   a point where the foliation vanishes (Prop. \ref{prop:xinj}) and
 the case of regular foliations (Prop. \ref{prop:reginj}).
Then we prove in full generality that $\Phi$ is injective (Thm. \ref{thm:inj}).    

In all cases we will make use of:

\begin{lemma}\label{lem:notes}
Let $x \in (M,\cF)$ and $\psi$ a local diffeomorphism in $exp(I_x\cF)$. Let $(U,\bt,\bs)$ be a path holonomy bi-submersion at $x$ (whence $U \subseteq M \times \R^n$). Then the element $(x,0) \in U$ carries the diffeomorphism $\psi$.
\end{lemma}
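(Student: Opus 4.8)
The statement asserts that, for a path holonomy bi-submersion $(U,\bt,\bs)$ at $x$, the special point $(x,0)\in U$ carries any local diffeomorphism $\psi \in exp(I_x\cF)$. Recall that ``carries'' means we must exhibit a bisection $V$ of $U$ through $(x,0)$ with $\psi=\bt|_V\circ(\bs|_V)^{-1}$. The natural strategy is to construct, through $(x,0)$, a section $b\colon M\to U$ of $\bs$ whose image is a bisection and which realises exactly the diffeomorphism $\psi$. Since $U\subseteq M\times\R^n$ with $\bs(y,\xi)=y$ and $\bt(y,\xi)=exp_y(\sum_i\xi_i X_i)$, a section of $\bs$ is precisely a map $y\mapsto(y,\lambda(y))$ for some $\R^n$-valued function $\lambda$, and passing through $(x,0)$ means $\lambda(x)=0$. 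The associated local diffeomorphism is then $y\mapsto exp_y(\sum_i\lambda_i(y)X_i)$, so the entire problem reduces to producing a function $\lambda$ with $\lambda(x)=0$ such that this time-one-flow map equals $\psi$.

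\textbf{Key steps.} First I would write $\psi=exp(Z)$ as the time-one flow of a time-dependent vector field $Z=\{Z_t\}_{t\in[0,1]}$ with each $Z_t\in I_x\cF$, which is exactly the meaning of $\psi\in exp(I_x\cF)$. The essential point is that $I_x\cF$ is a $C^\infty(M)$-module, so each $Z_t$ can be expressed as $Z_t=\sum_{i=1}^n f_i^t\, X_i$ where $X_1,\dots,X_n$ are the generators of $\cF$ defining $U$ and the coefficients $f_i^t$ are smooth functions; the condition $Z_t\in I_x\cF$ rather than merely $\cF$ is what forces the coefficients to be arranged so that the resulting vector fields vanish at $x$ in the appropriate sense, guaranteeing $x$ is a fixed point of $\psi$ and hence that the section can pass through $(x,0)$. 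Second, I would invoke the composition/minimality structure of path holonomy bi-submersions: the key mechanism is that flows of vector fields in $\cF$ along bisections of $U$ realise exactly the local diffeomorphisms carried by $U$, so that $\psi$, being built from flows of elements of $\cF$ vanishing at $x$, is carried by a point of $U$ projecting to $(x,x)$ under $(\bt,\bs)$; and by minimality of $U$ at $(x,0)$ together with item (7) in the bi-submersion list (two points carry the same diffeomorphism iff related by a morphism), that point must be $(x,0)$ itself. Concretely, this is where one chooses $\lambda$ so that $y\mapsto exp_y(\sum_i\lambda_i(y)X_i)$ reproduces $\psi$, using that near $x$ the exponential in the $X_i$ directions is a submersive parametrisation of the diffeomorphisms in $\langle X_1,\dots,X_n\rangle$.

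\textbf{The main obstacle.} The delicate point is passing from ``$\psi$ is a time-one flow of a \emph{time-dependent} field in $I_x\cF$'' to ``$\psi$ is the \emph{single} exponential $y\mapsto exp_y(\sum_i\lambda_i(y)X_i)$ for a suitable coefficient function $\lambda$ with $\lambda(x)=0$''. A priori the flow of a time-dependent combination $\sum_i f_i^t X_i$ need not be the time-one flow of any single autonomous combination $\sum_i\lambda_i X_i$; closing this gap requires the Baker--Campbell--Hausdorff-type composition law for flows, precisely the relation \eqref{Posieq} from \cite{Posi1988} invoked in the proof of Proposition \ref{holwelldef}, to absorb the time-ordering into a single coefficient function, while tracking that the vanishing-at-$x$ property is preserved so that $\lambda(x)=0$. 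I expect the cleanest route is not to exhibit $\lambda$ explicitly but to argue structurally: the diffeomorphism $\psi$ is carried by \emph{some} bi-submersion in the path-holonomy atlas at a point over $(x,x)$ (since it arises from flows of elements of $\cF$), and then to transport this via a morphism of bi-submersions into $U$, landing at the minimal point $(x,0)$ by item (7) and the minimality of $U$. This avoids the explicit BCH computation and reduces the whole statement to the universal property of the path-holonomy atlas together with minimality at $(x,0)$.
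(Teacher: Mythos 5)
You have correctly reduced the problem to its essential content: a bisection of $U$ through $(x,0)$ is the graph of a map $\lambda$ with $\lambda(x)=0$, carrying $y\mapsto exp_y(\sum_i\lambda_i(y)X_i)$, so one must show that $\psi$ admits such a ``frozen-coefficient'' presentation. But neither of your two proposed routes actually closes this gap. The structural route is circular: item (7) produces a morphism of bi-submersions $V\to U$ sending $v$ to a point $u$ of $U$ \emph{only if you already know} that some point $u$ of $U$ carries $\psi$, and even then nothing forces $u=(x,0)$ --- the fibre $U_x^x$ is in general a positive-dimensional submanifold (e.g.\ $U_0^0=\{0\}\times\R$ for $\cF=\langle z\partial_z\rangle$), and minimality of $U$ at $(x,0)$ does not distinguish $(x,0)$ from the other points of $U_x^x$; the statement from \cite{AndrSk} that does land you at $(x,0)$ applies to points carrying the \emph{identity}, which $\psi$ is not. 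The BCH route is left as a wish: Posilicano's formula \eqref{Posieq} decomposes the flow of a sum of time-dependent fields into a composition of two flows, and it is not at all clear how to use it to convert the time-ordered flow of $\{\sum_i f_i^tX_i\}$ into a single autonomous exponential with position-dependent coefficients.

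The paper's proof supplies the missing idea by working upstairs in $U$ rather than trying to produce $\lambda$ on $M$. Write $\psi$ as the time-one flow of $X=\{\sum_i f_i^tX_i\}$ with $f_i^t\in I_x$, choose $\bt$-lifts $Y_i\in C^\infty_c(U;\ker d\bs)$ of the $X_i$, and set $Y=\{\sum_i(f_i^t\circ\bt)Y_i\}$. The section $b^\psi(y)=exp_{(y,0)}(Y)$ is automatically a section of $\bs$ (because $Y$ is tangent to the $\bs$-fibres), satisfies $\bt\circ b^\psi=exp(X)=\psi$ (because $d\bt(Y_i)=X_i$ and the coefficients are pulled back by $\bt$), hence is a bisection, and passes through $(x,0)$ because $\bt(x,0)=x$ and $f_i^t(x)=0$ force $Y$ to vanish along the integral curve starting at $(x,0)$. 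In particular the function $\lambda$ you were seeking is obtained only a posteriori, as the $\R^n$-component of $b^\psi$; no BCH manipulation on $M$ and no appeal to morphisms of bi-submersions is needed. I would encourage you to redo the argument with this lifting construction, which is the one genuinely new ingredient your proposal is missing.
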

\begin{proof}
Let $X_1,\ldots,X_n \in \cF$ such that $[X_1],\ldots,[X_n]$ are a basis of $\cF_x$. Let $W$ be a neighborhood of $x$ in $M$ such that $\cF_W = <X_1,\ldots,X_n>$. Then there exist a family of functions $f^t_1,\ldots,f^t_n \in I_x$, depending smoothly on $t\in [0,1]$, such that $\psi=  exp(X)$ is the time-one flow of the time-dependent vector field $X = \{\sum f^t_i X_i\}_{t\in [0,1]}$. 

Let $(U,\bt,\bs)$ be the path holonomy bi-submersion defined by $X_1,\ldots,X_n$ at $x$, and choose $Y_1,\ldots,Y_n \in C^{\infty}_c (U;\ker d\bs)$ (linearly independent) such that $d\bt(Y_i) = X_i$. Consider the time-dependent vector field $Y = \{\sum (f_i^t \circ \bt) Y_i\}_{t \in [0,1]}$. Define a section of $\bs$ by $b^{\psi}\colon W \to U$, where  $$b^{\psi}(y) = exp_{(y,0)}(Y).$$ Then $\bt\circ b^{\psi}=exp(X)=\psi$, so  $b^{\psi}$ is a bisection carrying $\psi$. This bisection passes through $(x,0)$ as all $f_i^t$ lie in $I_x$.
\end{proof}

\subsubsection{The case of points where the foliation vanishes}

\begin{prop}\label{prop:xinj}
Let $x\in M$ a point where the foliation $\cF$ vanishes. Then
the map $\Phi_x^x$
% \colon  H_x^x \rightarrow \frac{GermAut_{\cF}(S_x, S_x)}{exp(I_x \cF_{S_x}) }$   
is injective.
 \end{prop}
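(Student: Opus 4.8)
The plan is to use that $\Phi_x^x$ is a group homomorphism and reduce injectivity to triviality of its kernel. First I would record the simplifications coming from the hypothesis: since $\cF$ vanishes at $x$, the tangent space $F_x$ to the leaf is zero, so a transversal $S_x$ is simply an open neighbourhood of $x$ in $M$ and $\cF_{S_x}=\cF|_{S_x}$. Consequently the target of $\Phi_x^x$ is $GermAut_{\cF}(S_x,S_x)/exp(I_x\cF)$. By Cor. \ref{groidmap} the map $\Phi$ is a morphism of groupoids covering $Id_M$; restricted to the isotropy $H_x^x$ it is a homomorphism into the group $GermAut_{\cF}(S_x,S_x)/exp(I_x\cF)$ (that $exp(I_x\cF)$ is a normal subgroup is exactly the conjugation computation in the proof of Cor. \ref{groidmap}). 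Hence it suffices to show that $\ker\Phi_x^x$ is trivial, i.e. that $\Phi_x^x(h)=\langle Id\rangle$ forces $h$ to be the unit $1_x\in H_x^x$.

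Next, take $h\in H_x^x$ with $\Phi_x^x(h)=\langle Id\rangle$. Following the recipe of Theorem \ref{globalaction}, choose a path-holonomy bi-submersion $(U,\bt,\bs)$ and $u\in U$ with $[u]=h$, together with a section $\bar{b}\colon S_x\to U$ of $\bs$ through $u$, and set $\tau:=\bt\circ\bar{b}$. Extending $\bar{b}$ to a bisection (as in the proof of Lemma \ref{immaterial}), the point $u$ carries the germ $\tau$. The assumption $\Phi_x^x(h)=\langle Id\rangle$ means precisely that $\tau\in exp(I_x\cF)$.

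Finally, I would invoke Lemma \ref{lem:notes}: since $\tau\in exp(I_x\cF)$, the point $(x,0)$ of a path-holonomy bi-submersion also carries $\tau$. Thus $u$ and $(x,0)$ carry a common foliation-preserving local diffeomorphism, so by the criterion \cite[Cor. 2.11(b)]{AndrSk} there is a morphism of bi-submersions taking $u$ to $(x,0)$; equivalently $[u]=[(x,0)]$ in $H$. Since $(x,0)$ carries $Id$ (via its zero section), $[(x,0)]=1_x$, and therefore $h=1_x$. This shows $\ker\Phi_x^x=\{1_x\}$ and, $\Phi_x^x$ being a homomorphism, that it is injective. The only point demanding care is the bookkeeping of germs, namely that the \emph{same} germ $\tau$ is carried by both $u$ and $(x,0)$ so that the bi-submersion criterion genuinely applies. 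No serious obstacle arises in this special case, because the vanishing of $\cF$ at $x$ makes $S_x$ a full neighbourhood and collapses $exp(I_x\cF_{S_x})$ to $exp(I_x\cF)$, which is exactly the hypothesis of Lemma \ref{lem:notes}; it is precisely the failure of this collapse for a proper slice that makes the general statement, Thm. \ref{thm:inj}, substantially harder.
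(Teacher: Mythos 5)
Your proposal is correct and follows essentially the same route as the paper's own proof: both reduce to showing the kernel is trivial, observe that the vanishing of $\cF$ at $x$ makes $S_x$ a full neighbourhood so that $\tau\in exp(I_x\cF_{S_x})=exp(I_x\cF)$ is a diffeomorphism carried by $u$, and then combine Lemma \ref{lem:notes} with \cite[Cor. 2.11]{AndrSk} to conclude $[u]=[(x,0)]=1_x$. Your closing remark correctly identifies why this argument does not extend to a proper slice, which is exactly the difficulty addressed by Lemma \ref{lem:trivext} in the general case.
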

\begin{proof}
We have to show for all $h\in H_x^x$: if $\Phi(h)$ is the class of $Id_{S_x}$, then $h=1_x$. Let $U$ a bi-submersion in the path-holonomy atlas and $u\in U$ with $[u]=h$. By the definition of $\Phi$, the germ $\Phi(h)$ lies in the class of $Id_{S_x}$ if{f} there is a section $b \colon S_x \to U$ of $\bs$ through $u$ such that $\psi:=\bt\circ b\in exp(I_x \cF_{S_x})$.  Since the slice $S_x$ is  an open neighborhood of $x$ in $M$, this means that $u$ carries a diffeomorphism  $\psi\in exp(I_x \cF)$.   
By Lemma \ref{lem:notes}, for any path-holonomy bi-submersion $V$ defined near $x$, the element $(x,0)\in V$ also carries $\psi$.
Hence by \cite[Cor. 2.11]{AndrSk} there exists a morphism of bi-submersions $U\to V$ with $u \mapsto (x,0)$, and therefore $h=[u]=[(x,0)]=1_x$.
\end{proof}

\subsubsection{The regular case}
%For the case of regular foliations we first need two lemmas.

The proof of the injectivity of $\Phi$ we provide here 
does not generalize to the singular case, however it has the 
virtue of being quite geometric.

\begin{lemma}\label{lem:bc} Let $(M,\cF)$ be a singular foliation and $x\in M$.
Let $U\subset M\times \R^n $ be a path-holonomy  bi-submersion and $u\in U_x$.

1) Let $b$ be a bisection  through  $u$, and  $c$ 
a bisection through $(x,0)$, and denote the diffeomorphisms they carry by $\phi_b$ resp. $\phi_c$. Then there exists a bisection of $U$ through $u$ carrying $\phi_b\circ \phi_c$.

2) If $u\in U_x^x$, then there exists a bisection of $U$ through $u$ carrying an orientation-preserving local diffeomorphism of a neighborhood $W\subset M$ of $x$.
\end{lemma}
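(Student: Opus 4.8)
For part 1, the plan is to realise $\phi_b\circ\phi_c$ inside the composite bi-submersion $U\circ U$ and then carry it back to $U$ by a single morphism of bi-submersions. Since $\bs(u)=x=\bt(x,0)$, the pair $(u,(x,0))$ is a point of $U\circ U$. I would first note that composing the bisection $b$ with the \emph{zero} bisection $\{(y,0)\}$ (which passes through $(x,0)$ and carries $\mathrm{Id}$) produces a bisection of $U\circ U$ through $(u,(x,0))$ carrying $\phi_b\circ\mathrm{Id}=\phi_b$; as $u$ itself carries $\phi_b$ via $b$, the points $(u,(x,0))\in U\circ U$ and $u\in U$ carry a common local diffeomorphism. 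By \cite[Cor.~2.11]{AndrSk} there is then a morphism of bi-submersions $g$, defined near $(u,(x,0))$, with $g(u,(x,0))=u$. The key point is that this \emph{same} $g$ acts on all bisections through $(u,(x,0))$: applying it to the composite bisection $b\circ c$, which carries $\phi_b\circ\phi_c$ by the computation in the proof of Cor.~\ref{groidmap}, and using that morphisms preserve $\bs$ and $\bt$, the image $g(b\circ c)$ is a bisection of $U$ through $u$ carrying exactly $\phi_b\circ\phi_c$.

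For part 2, I would measure orientation by the sign of $\det d_x\phi$, which is coordinate-independent for a local diffeomorphism $\phi$ fixing $x$, and use part 1 to correct it. Starting from any bisection $b$ through $u$, with carried diffeomorphism $\phi_b$ fixing $x$, the strategy is: if $\det d_x\phi_b>0$ we are done, and otherwise we post-compose with a diffeomorphism $\phi_c$ carried by $(x,0)$ chosen so that $\det d_x\phi_c<0$. By part 1 the resulting $\phi_b\circ\phi_c$ is again carried by a bisection of $U$ through $u$, and since $\det d_x(\phi_b\circ\phi_c)=\det d_x\phi_b\cdot\det d_x\phi_c$, it is orientation-preserving.

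The substance is then producing this corrector $\phi_c$. Writing $F(y,\xi)=\bt(y,\xi)=\exp_y(\sum_i\xi_iX_i)$ and taking the section $y\mapsto(y,\tau(y))$ with $\tau(x)=0$, a direct computation should give
\begin{equation*}
d_x\phi_c=\mathrm{Id}+\textstyle\sum_i v_i\,d_x\tau_i,\qquad v_i:=X_i(x),
\end{equation*}
since $\partial_yF(x,0)=\mathrm{Id}$ and $\partial_\xi F(x,0)=[\,v_1|\cdots|v_n\,]$. The perturbation $A:=\sum_i v_i\,d_x\tau_i$ has image in $F_x=\mathrm{span}\{v_i\}$, so in a splitting $T_xM=F_x\oplus C$ the map $\mathrm{Id}+A$ is block upper triangular and $\det d_x\phi_c=\det(\mathrm{Id}_{F_x}+A|_{F_x})$. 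As the $v_i$ span $F_x$ and the differentials $d_x\tau_i$ can be prescribed freely, $A|_{F_x}$ ranges over all of $\mathrm{End}(F_x)$, so whenever $F_x\neq 0$ I can arrange $\det(\mathrm{Id}_{F_x}+A|_{F_x})<0$, yielding an orientation-reversing $\phi_c$ through $(x,0)$.

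The hard part, and the only situation not covered above, is the degenerate case $F_x=0$, where no such corrector exists. I would dispose of it by showing that there the linear holonomy is automatically orientation-preserving: if $F_x=0$ then every $X_i$ vanishes at $x$, so $\xi\mapsto F(x,\xi)\equiv x$ forces $\partial_\xi F(x,\xi_0)=0$, whence for \emph{any} bisection through $u=(x,\xi_0)$ one has $d_x\phi_b=\partial_yF(x,\xi_0)=\exp\!\big(d_xV_{\xi_0}\big)$, the linearisation at the zero $x$ of the time-one flow of $V_{\xi_0}=\sum_i(\xi_0)_iX_i$. Since $\det\exp(B)=e^{\mathrm{tr}(B)}>0$, every carried diffeomorphism is orientation-preserving here, so $\det d_x\phi_b<0$ can occur only when $F_x\neq 0$, where the previous paragraph supplies $\phi_c$. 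Combining the two cases gives part 2. The main obstacle throughout is precisely this orientation bookkeeping in part 2, including the explicit derivative computation through the path-holonomy structure $\bt(y,\xi)=\exp_y(\sum\xi_iX_i)$; part 1 is comparatively formal once $U\circ U$ and \cite[Cor.~2.11]{AndrSk} are in place.
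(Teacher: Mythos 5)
Your proof is correct. Part 1 coincides with the paper's argument: both obtain a morphism of bi-submersions $U\circ U\to U$ sending $(u,(x,0))$ to $u$ from \cite[Cor.~2.11]{AndrSk} (using that $(x,0)$ carries the identity) and push the composite bisection through it. Part 2 is where you genuinely diverge. The paper invokes the splitting theorem (Prop.~\ref{thm:splitting}) to replace $U$ by an isomorphic path-holonomy bi-submersion whose first generator is a leafwise coordinate field $\partial_{s_1}$, and exhibits the explicit bisection $s\mapsto(-2s_1,0,\dots,0)$ through $(x,0)$, which carries the reflection $s_1\mapsto-s_1$ and is therefore orientation-reversing; part 1 then corrects $b$. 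You instead stay in the given $U$ and argue at the level of Jacobians: $d_x\phi_c=\mathrm{Id}+\sum_i v_i\,d_x\tau_i$, the perturbation sweeps out all of $\mathrm{End}(F_x)$, so an orientation-reversing corrector through $(x,0)$ exists as soon as $F_x\neq 0$. Both routes hinge on the same two ingredients (part 1 plus a concrete orientation-reversing element carried by $(x,0)$), but your computation buys two things: it dispenses with the normal-form step (the isomorphism of \cite[Lemma 2.6]{AnZa11} needed to transport the paper's explicit bisection back to $U$), and it isolates and settles the degenerate case $F_x=0$, where the paper's construction has no leafwise coordinate $s_1$ to reflect; there you correctly note that $\partial_\xi\bt(x,\cdot)\equiv 0$ forces $d_x\phi_b=e^{d_xV_{\xi_0}}$, of positive determinant, so every carried diffeomorphism is already orientation-preserving and no corrector is needed.
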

\begin{proof}
1) There exists a morphism of bi-submersions $\alpha \colon U\circ U\to U$ with $(u,(x,0))\to u$,
%: if $c'$ is a bisection through $v$ carrying the identity, then $(b,c')$ and $b$ are bisections of $U\circ U$ resp. $U$ carrying the same diffeomorphism, so one can apply 
since $(x,0)$ carries the identity and by
\cite[Cor. 2.11]{AndrSk}. The image of the bisection $(b,c)$ under $\alpha$ is the a bisection with the required properties.

2) Apply Prop. \ref{thm:splitting}, so $W \cong (L\cap W) \times S_x$, where $S_x$ is a slice at $x$. Denote by $s_1,\dots,s_{dim(M)}$ a set of adapted coordinates on $W$, so that there exists a set of generators $\{X_i\}_{i\le n}$ of $\cF|_W$ with $X_1=\partial_{s_1},\dots,X_k=\partial_{s_k}$. 
The corresponding path-holonomy bi-submersion is isomorphic to $U$ by 
\cite[Lemma 2.6]{AnZa11}, so we may assume that it is $U$.
% $U\subset W\times \R^{n}$ be the corresponding path-holonomy bi-submersion. 
The bisection $$c \colon W\cong   (L\cap W)\times S_x
\to U, s \mapsto (-2s_1,0,\dots,0)$$ carries the diffeomorphism $s\mapsto exp_{s}(-2s_1\partial_{s_1})=(-s_1,s_2,\dots,s_{dim(M)})$, which clearly is orientation-reversing. 

Now let $b$ be an arbitrary bisection through $u$. If the local diffeomorphism carried by $b$ (which fixes $x$) is not orientation-preserving, by part 1) the local diffeomorphism carried by
% the bisection $b\circ c$ through $u$
$b\circ c$  will be.
\end{proof}

\begin{lemma}\label{lem:calculus}
Let $B$ be an open neighborhood of the origin $0$ in $\R^k$, and $\phi$ an orientation-preserving diffeomorphism of $B$ fixing $0$. Then, shrinking $B$ if necessary, we can write $\phi=exp(X)$ where $\{X_t\}_{t\in [0,1]}$ is a time-dependent vector field on $B$ vanishing at $0$.
\end{lemma}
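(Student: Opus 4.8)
The plan is to exhibit $\phi$ as the time-one map of a smooth isotopy $\{\phi_t\}_{t\in[0,1]}$ of local diffeomorphisms of a neighborhood of $0$, with $\phi_0=Id$, $\phi_1=\phi$, and $\phi_t(0)=0$ for every $t$. Once such an isotopy is available, setting $X_t:=\dot\phi_t\circ\phi_t^{-1}$ produces a time-dependent vector field whose flow is exactly $\phi_t$; its time-one flow is then $\phi$, and since $\phi_t(0)=0$ forces $\dot\phi_t(0)=0$, we get $X_t(0)=\dot\phi_t(\phi_t^{-1}(0))=\dot\phi_t(0)=0$, i.e. $X_t$ vanishes at the origin. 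So everything reduces to constructing the isotopy, and this is where the orientation hypothesis must be used.

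I would build the isotopy in two stages. Write $A:=d_0\phi$; since $\phi$ is orientation-preserving and fixes $0$, we have $\det A>0$. By Hadamard's lemma there is a smooth matrix-valued function $g$ with $\phi(y)=g(y)\,y$ and $g(0)=A$. Taking $B$ to be a ball (so that $tx\in B$ whenever $t\in[0,1]$, $x\in B$), set for $t\in(0,1]$ the ``zooming in'' family $\phi_t(x):=\tfrac1t\phi(tx)=g(tx)\,x$; the right-hand side is smooth in $(t,x)$ up to $t=0$, where it equals $Ax$. This connects $\phi$ (at $t=1$) to its linearization $A$ (at $t=0$), and a short computation gives $d_0\phi_t=A$ for all $t$, so each $\phi_t$ fixes $0$ and is a local diffeomorphism there. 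In the second stage I connect $A$ to $Id$: because $\det A>0$, the matrix $A$ lies in the connected group $GL^+(k,\R)$, so there is a smooth path of linear isomorphisms from $Id$ to $A$, each fixing $0$. Concatenating the two stages, reparametrised so that the join is smooth, yields a smooth family running from $Id$ to $\phi$ through diffeomorphisms fixing the origin.

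The main technical point — and the step I expect to cost the most care — is that a genuine isotopy should consist of diffeomorphisms of one fixed neighborhood of $0$, whereas the formulas above only give, for each $t$, a diffeomorphism onto its image near $0$; the same issue reappears when requiring $X_t$ to be defined on a single neighborhood. This is precisely where the phrase ``shrinking $B$ if necessary'' enters: since $d_0\phi_t=A$ is invertible uniformly in $t$ and $\phi_t$ depends smoothly on $t$ over the compact interval $[0,1]$, a uniform version of the inverse function theorem furnishes one ball on which all $\phi_t$ are diffeomorphisms onto their images, with images containing a common neighborhood of $0$. Restricting the construction to such a ball makes $\{\phi_t\}$ an honest isotopy and lets $X_t$ be defined on a fixed $B$, completing the argument. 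I would also remark that the orientation-preserving assumption is not merely convenient but necessary, since any time-one flow of a vector field is isotopic to the identity and hence orientation-preserving.
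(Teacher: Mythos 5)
Your proof is correct and follows essentially the same route as the paper's: both reduce the statement to constructing an isotopy from $\phi$ to $Id$ through diffeomorphisms fixing $0$, pass through the linearization $d_0\phi$ as the intermediate stage, and finish using the connectedness of $GL_+(\R^k)$. The only difference is the first stage — you use the rescaling $\phi_t(x)=\tfrac1t\phi(tx)$ via Hadamard's lemma where the paper takes the convex combination of $\phi$ and $d_0\phi$ — and your treatment of the domain-shrinking issue is more careful than the paper's.
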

\begin{proof} 
It suffices to show that
$\phi$ is isotopic to $Id_B$ by diffeomorphisms fixing $0$.
A concrete isotopy from $\phi$ to its derivative
$d_0\phi$ (restricted to $B$) is given by their convex linear combination.
Now $d_0\phi$ lies in $GL_+(\R^k)$,  hence it can be connected to $Id_{\R^k}$ by a path in $GL_+(\R^k)$.
\end{proof}

Until the end of this section we consider a regular foliation $(M,\cF)$.

\begin{prop}\label{prop:reginj}
Let  $(M,\cF)$ be a manifold with a regular foliation. 
Fix a point $x\in M$ and a slice $S_x$. Then
the map $\Phi_x^x \colon H_x^x \rightarrow{GermAut_{\cF}(S_x, S_x)}$   is injective.  It follows that $\Phi$ is injective.
 \end{prop}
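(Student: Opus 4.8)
The plan is to prove injectivity of $\Phi_x^x$ for a regular foliation by reducing the statement to a transverse problem and exploiting the rigidity provided by the auxiliary lemmas just proved. Since $(M,\cF)$ is regular, by Lemma \ref{lem:regcase} we have $exp(I_x\cF_{S_x})$ trivial, so the target of $\Phi_x^x$ is simply $GermAut_{\cF}(S_x,S_x)$ with no quotient, and the claim is that $h\in H_x^x$ mapping to the germ of $Id_{S_x}$ must equal $1_x$. I would fix a path-holonomy bi-submersion $U\subset M\times\R^n$ and a point $u\in U_x^x$ with $[u]=h$, chosen in adapted coordinates via the Splitting Theorem (Prop. \ref{thm:splitting}) so that $W\cong (L\cap W)\times S_x$.

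The key first step is to use Lemma \ref{lem:bc}(2): I would replace the given bisection by one carrying an \emph{orientation-preserving} local diffeomorphism $\phi$ of $W$ fixing $x$. By hypothesis the induced germ on the slice $S_x$ is the identity, so $\phi$ preserves each plaque and restricts to the identity transverse to $L$; concretely $\phi$ is a leaf-preserving diffeomorphism whose transverse component is trivial. The goal is then to show that such a $\phi$ is carried by $(x,0)\in U$, because then by \cite[Cor. 2.11]{AndrSk} there is a morphism of bi-submersions sending $u$ to $(x,0)$, forcing $h=[u]=1_x$.

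The heart of the argument is to realize $\phi$ as the time-one flow of a time-dependent vector field tangent to the leaves and \emph{vanishing at $x$} — i.e. to show $\phi\in exp(\cF(x))$, or better $exp(I_x\cF)$ — so that the bisection-construction of Lemma \ref{lem:notes} (or its variant) produces a bisection through $(x,0)$ carrying $\phi$. Here I would apply Lemma \ref{lem:calculus} in the leaf direction: along each plaque $\phi$ is an orientation-preserving diffeomorphism fixing the basepoint, hence isotopic to the identity through diffeomorphisms fixing that point, and this isotopy can be integrated to a leaf-tangent (hence in $\cF$, by regularity) time-dependent vector field realizing $\phi$ as its flow. Because the transverse part of $\phi$ is the identity, this vector field can be arranged to vanish at $x$.

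The main obstacle I expect is the last step: writing $\phi$ as a flow of a \emph{foliated} vector field rather than an arbitrary one, and controlling that the vector field lands in $\cF$ and vanishes at $x$ uniformly over the transverse parameter. Lemma \ref{lem:calculus} gives the leafwise isotopy, but one must patch these into a single smooth time-dependent vector field on $W$ whose flow is $\phi$ and which is everywhere tangent to the leaves (so that it lies in $\cF_W$); regularity of the foliation is essential here, since it guarantees that leaf-tangent smooth vector fields are exactly the sections of the subbundle defining $\cF$. Once this is done the final deduction is immediate: $\phi\in exp(\cF(x))$ is carried by $(x,0)$, giving the morphism $U\to U$ with $u\mapsto(x,0)$ and hence $h=1_x$. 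Injectivity of $\Phi$ then follows since $\Phi$ covers $Id_M$ and is therefore injective as soon as each $\Phi_x^x$ is.
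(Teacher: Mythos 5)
Your proposal is correct and follows essentially the same route as the paper's proof: reduce the target to $GermAut_{\cF}(S_x,S_x)$ via Lemma \ref{lem:regcase}, use Lemma \ref{lem:bc} to replace the bisection by one carrying an orientation-preserving $\phi$, apply Lemma \ref{lem:calculus} leafwise (smoothly in the transverse parameter) to write $\phi$ as the flow of a leaf-tangent time-dependent vector field vanishing along $S_x$, and conclude via Lemma \ref{lem:notes} and \cite[Cor. 2.11]{AndrSk}. The only cosmetic difference is that you land the vector field in $exp(\cF(x))$ and upgrade to $exp(I_x\cF)$ using regularity ($\g_x=0$), whereas the paper observes directly that the field vanishes on the whole slice $S_x$; both give $X_t\in I_x\cF$.
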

\begin{proof}
First notice that the target of the map $\Phi_x^x$ 
%of  Thm. \ref{globalaction}
 is really as above, due to Lemma \ref{lem:regcase}.
 We have
 to show that if $h\in H_x^x$ satisfies $\Phi(h)=Id_{S_x}$, then $h=1_x$. Let $U$ a bi-submersion in the path-holonomy atlas and $u\in U$ with $[u]=h$.
There is a bisection
$b\colon W \to U$   through $u$, defined on an open neighborhood $W\subset M$, which carries an  orientation-preserving diffeomorphism $\phi$ (the bi-submersions $U$ is a composition of path-holonomy bi-submersions, so just apply Lemma \ref{lem:bc} to each of them).
 Since $\phi|_{S_x}=\Phi(h)=Id_{S_x}$, we deduce that that $\phi|_{(L\cap W)\times \{p\}}$ is orientation-preserving for all $p\in S_x$. By Lemma \ref{lem:calculus}, applied smoothly on each 
(portion of) leaf $(L\cap W)\times \{p\}$, we have
$\phi=exp(X)$ where $\{X_t\}_{t\in [0,1]}$ is a time-dependent vector field on $W$, tangent to the leaves and vanishing on the slice  $S_x$. In particular,  $X_t\in I_x\cF$ for all $t$. Hence we showed that $u\in U$ carries a diffeomorphism $\phi$ that lies in $exp(I_x\cF)$.

By Lemma \ref{lem:notes}, for any path-holonomy bi-submersion $V$ defined near $x$, the element $(x,0)\in V$ also carries $\phi$.
Hence by \cite[Cor. 2.11]{AndrSk} there exists a morphism of bi-submersions $U\to V$ with $u \mapsto (x,0)$, and therefore $h=[u]=[(x,0)]=1_x$.
\end{proof}

Consider the holonomy groupoid of a regular foliation as defined by Winkelnkemper: 
$$\cH:=\{\text{paths lying in leaves}\}/\text{holonomy}$$
where ``holonomy'' is meant in the sense of \S\ref{subsec:reg}.
In \cite[Cor. 3.10]{AndrSk} it was shown that $H$ and $\cH$ are canonically isomorphic.  
We end this section recovering this bijection in an alternative way.

Make a smooth choice of slices $S_x$ at every $x\in M$. We denote by ${GermAut}(S, S)$ the (groupoid over $M$ of) germs to automorphisms between such slices.   $\cH$ can be identified with  the image of the map $hol \colon 
\{\text{paths lying in leaves}\}  \to{GermAut(S, S)}$ introduced in \S\ref{subsec:reg},  so we can regard $\cH$ as a subset of ${GermAut}(S, S)$.

\begin{cor}\label{cHH}
 $\Phi\colon H \to {GermAut(S, S)}$ 
 %is $\cH$. Further, restricting the target of  $\Phi$ one obtains a   
%isomorphism of topological groupoids 
induces a bijection
$H\cong \cH$.  \end{cor}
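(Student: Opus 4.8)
The plan is to show that $\Phi$ factors as the composition of the canonical isomorphism $\beta\colon H\xrightarrow{\sim}\cH$ of \cite[Cor.~3.10]{AndrSk} with the inclusion $\cH\hookrightarrow GermAut(S,S)$; the induced bijection $H\cong\cH$ is then $\beta$ itself. Before that I would record two facts already at hand. Since $\cF$ is regular, Lemma \ref{lem:regcase} makes the subgroup $exp(I_x\cF_{S_x})$ trivial, so $\Phi$ genuinely takes values in $GermAut(S,S)$; and $\Phi$ is injective by Prop. \ref{prop:reginj}. Consequently it suffices to prove that the image of $\Phi$ is exactly $\cH$, and this will follow from the identification $\Phi(h)=hol_\gamma$ for every $h\in H_x^y$ and every leafwise path $\gamma$ from $x$ to $y$ representing $h$ under $\beta$.

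First I would establish this identification. Since $\Phi$ is a morphism of groupoids (Cor. \ref{groidmap}), classical holonomy is multiplicative under concatenation of paths, and concatenation corresponds precisely to composition of bi-submersions (under which $\beta$ is compatible, being a groupoid isomorphism), it is enough to check $\Phi(h)=hol_\gamma$ when $h=[u]$ is represented by a point $u=(x,\xi)$ of a single path-holonomy bi-submersion $U\subseteq M\times\R^n$ built from generators $X_1,\dots,X_n$ of $\cF$. Such a $u$ represents the leafwise path $\gamma(t)=exp_x\big(t\sum_i\xi_i X_i\big)$. Taking a section $\bar b$ of $\bs$ through $u$ obtained as a small adjustment of the constant section $\tilde x\mapsto(\tilde x,\xi)$ so that $(\bt\circ\bar b)(S_x)\subseteq S_y$, the recipe of Thm. \ref{globalaction} yields $\tau=\bt\circ\bar b$, which is the leafwise projection to $S_y$ of $\tilde x\mapsto exp_{\tilde x}\big(\sum_i\xi_i X_i\big)$. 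But $\Gamma(\tilde x,t):=exp_{\tilde x}\big(t\sum_i\xi_i X_i\big)$ is a leafwise extension of $\gamma$ with $\Gamma|_{S_x\times\{0\}}=Id_{S_x}$, and in the regular case $hol_\gamma$ is independent of the chosen extension; hence $\Phi(h)=\langle\tau\rangle=hol_\gamma$ as germs. This is exactly the statement announced in Remark \ref{rem:holtransf}(1).

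Granting this, the image of $\Phi$ equals $\{hol_\gamma:\gamma\text{ a leafwise path}\}$, which is by definition $\cH\subseteq GermAut(S,S)$; combined with the injectivity of $\Phi$, we conclude that $\Phi$ restricts to a bijection $H\cong\cH$, coinciding with $\beta$. I expect the only genuinely delicate point to be the matching $\tau=hol_\gamma$ on a single path-holonomy bi-submersion: one must verify that flowing along $\sum_i\xi_i X_i$ and then projecting to $S_y$ along the leaves reproduces the transverse germ defining classical holonomy, and that the adjustment of $\bar b$ needed so that $\bt\circ\bar b$ actually lands in $S_y$ leaves the resulting germ unchanged — both relying on the triviality of $exp(I_x\cF_{S_x})$ in the regular case. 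The reduction to generators and the final bookkeeping are then purely formal.
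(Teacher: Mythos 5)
Your argument is correct, and its computational core (matching $\Phi([u])$, for $u=(x,\xi)$ in a single path-holonomy bi-submersion, with the classical holonomy of the exponential path $\gamma(t)=exp_x(t\sum_i\xi_iX_i)$, then reducing the general case to this one by multiplicativity) is exactly the computation the paper performs. The logical organization differs, though, in a way that matters for the paper's purpose. You take the isomorphism $\beta\colon H\to\cH$ of \cite[Cor.~3.10]{AndrSk} as given and verify that $\Phi=\iota\circ\beta$; this proves the literal statement, and in fact makes the appeal to Prop.~\ref{prop:reginj} redundant, since injectivity of $\Phi$ would follow for free from the factorization. The paper, by contrast, announces just before the corollary that it is \emph{recovering} the bijection of \cite[Cor.~3.10]{AndrSk} ``in an alternative way'', so its proof deliberately avoids citing that result: it establishes the two inclusions $\cH\subset\Phi(H)$ and $\Phi(H)\subset\cH$ directly --- building an element of a composite bi-submersion $U_N\circ\dots\circ U_1$ out of a given leafwise path for one inclusion, and conversely extracting a path from an arbitrary $u\in U_N\circ\dots\circ U_1$ by rescaling $t\cdot(z,\lambda)=(z,t\lambda)$ for the other --- and then invokes the independently proved injectivity of $\Phi$ (Prop.~\ref{prop:reginj}) to upgrade surjectivity onto $\cH$ to a bijection. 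What your route buys is brevity and a clean conceptual statement ($\Phi$ extends the known identification); what the paper's route buys is independence from \cite[Cor.~3.10]{AndrSk}, so that the corollary genuinely re-derives that identification rather than presupposing it. If you want your proof to serve the role the paper intends, you should replace the appeal to $\beta$ by the two explicit constructions above; everything else in your write-up (triviality of $exp(I_x\cF_{S_x})$ via Lemma~\ref{lem:regcase}, independence of the holonomy germ from the choice of leafwise extension, and the adjustment of the constant section so that $\bt\circ\bar b$ lands in $S_y$) is sound as stated.
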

\begin{proof}
%We first show that the image of $\Phi$ is $\cH$.
To show  $\cH \subset \Phi(H)$, we let $\gamma$ be a path in a leaf of $(M,\cF)$, and need to find $h\in H$ such that $\Phi(h)=hol(\gamma)$. Cover the image of $\gamma$ by  open subsets $W_1,\dots,W_N$ such that on each $W_{\alpha}$ the foliation $\cF$ is generated by vector fields $\{X^\alpha_1,\dots,X^\alpha_n\}$. Denote by $U_{\alpha}\subset M\times \R^n$ the corresponding the path-holonomy bi-submersion.
We may assume that $\gamma$ is defined on the interval $[0,N]$ and that the velocity of $\gamma$ equals $X^\alpha_1$ when $t\in [\alpha-1,\alpha]$, for all $\alpha=1,\dots,N$. Consider the bi-submersion $U:=U_N\circ\dots\circ U_1$, and its point $$u=(u_N,\dots,u_1) \text{    where }u_\alpha:=(\gamma(\alpha-1),(1,0,\dots,0))\in U_{\alpha}, \forall \alpha=1,\dots,N.$$ Then $h:=[u]\in H$ satisfies $\Phi(h)=hol(\gamma)$.

We show $\Phi(H)\subset\cH$. Given an element $h\in H$,
take a representative $u\in U$, where $U$ is a bi-submersion in the path-holonomy atlas. Therefore $U=U_N\circ\dots\circ U_1$ for path-holonomy bi-submersions $U_{\alpha}$ ($\alpha=1,\dots,N$), and $u=(u_N,\dots,u_1)$. Notice that $U_\alpha \subset M\times \R^{dim(\cF_{\bs(h)})}$, so we can rescale its elements by defining $t\cdot(z,\lambda):=(z,t\lambda)$ for all $t\in [0,1]$ and $(z,\lambda)\in U_{\alpha}$.
 We now define a path from $\bs(h)$ to $\bt(h)$   lying in a leaf of the foliation, by
 $$\gamma \colon [0,N]\to M,\;\;\; \gamma(t)=\bt_{U_{\alpha}}((t-{\alpha}-1)\cdot u_{\alpha})
\text{ when }t\in [{\alpha}-1,{\alpha}].$$ From the definition of $\Phi$ and $hol$ it follows that $hol(\gamma)=\Phi(h)$, so $\Phi(h)\in \cH$.
%This concludes the proof that the image of $\Phi$ is $\cH$.  
 
Hence $\Phi$ maps $H$  surjectively onto $\cH$. Since $\Phi$ is injective by Prop. \ref{prop:reginj}, we obtain the desired bijection.  \end{proof}

\subsubsection{The general case}

We present a proof of the injectivity of $\Phi$ for any singular foliation $\cF$. The idea is to reduce the problem to the case where the foliation vanishes at a point.

\begin{lemma}\label{lem:trivext}
Let $x\in (M,\cF)$ and $h\in H_x^x$.
% let $U$ be a bi-submersion and $u\in U$ a point with $[u]=h$. Let $\bar{b}\colon S_x\to U$ a section of the source map $\bs$
Take any bi-submersion $(U,\bt_U,\bs_U)$   with a point $u\in U$ satisfying $[u]=h$, and
  take any section $\bar{b} \colon S_x \to U$ of $\bs_U$  through $u$ such that $\tau:=\bt_U\circ \bar{b}$ maps  $S_x$ into itself. In a neighborhood $W$ of $x$ choose a diffeomorphism $W\cong I^k\times S_x$ as in Prop. \ref{thm:splitting} (the splitting theorem).

There exists a bisection\footnote{$b$ does not necessarily restrict to $\bar{b}$.}  $b\colon W\to U$ through $u$ such that the diffeomorphism it carries is the trivial extension of $\tau$ to every vertical slice, that is: 
\begin{equation}\label{diffnice}
I^k\times S_x\to I^k\times S_x, \;\;\; (\vec{s},p)\mapsto  
(\vec{s},\tau(p)).
\end{equation}
\end{lemma}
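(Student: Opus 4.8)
The plan is to construct the bisection $b$ by hand, flowing the given section $\bar b$ along simultaneous lifts of the leafwise coordinate vector fields. I would work throughout in the splitting coordinates $W\cong I^k\times S_x$ furnished by Prop.~\ref{thm:splitting}, in which $\cF_W$ is generated by $\partial_{s_1},\dots,\partial_{s_k}$ together with the trivial extensions of generators of $\cF_{S_x}$; identify $S_x$ with $\{0\}\times S_x$ and write $\Psi$ for the target diffeomorphism \eqref{diffnice}. The point of the construction is that $\Psi$ translates the source and the target by the \emph{same} vector $\vec{s}$ in the $I^k$-direction while acting by $\tau$ on the $S_x$-factor. Since $\tau$ maps $S_x$ into itself, we already have $\bt_U(\bar b(0,p))=(0,\tau(p))$, i.e.\ the target of $\bar b$ sits in the zero slice.

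The key step is producing, for each $i$, a vector field on $U$ that projects to $\partial_{s_i}$ under \emph{both} $\bs_U$ and $\bt_U$. Because $\partial_{s_i}\in\cF$ and $\bt_U$ is a submersion, a $\bt_U$-projectable lift of $\partial_{s_i}$ exists and lies in $\bt_U^{-1}(\cF)=\bs_U^{-1}(\cF)$; decomposing it through the defining property $\bs_U^{-1}(\cF)=C^{\infty}_c(U;\ker d\bs_U)+C^{\infty}_c(U;\ker d\bt_U)$ and keeping the $\ker d\bs_U$-summand yields $Y_i\in C^{\infty}_c(U;\ker d\bs_U)$ with $d\bt_U(Y_i)=\partial_{s_i}$ (exactly as the lifts chosen in Lemma~\ref{lem:notes}). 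Exchanging the roles of $\bs_U$ and $\bt_U$ gives $Z_i\in C^{\infty}_c(U;\ker d\bt_U)$ with $d\bs_U(Z_i)=\partial_{s_i}$. Setting $\xi_i:=Y_i+Z_i$, one checks $d\bs_U(\xi_i)=d\bt_U(\xi_i)=\partial_{s_i}$, so $\xi_i$ is simultaneously $\bs_U$- and $\bt_U$-related to $\partial_{s_i}$.

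With these in hand I would define, after shrinking $I^k$ so that the flows are defined,
$$ b(\vec{s},p):=\Theta^{k}_{s_k}\circ\cdots\circ\Theta^{1}_{s_1}\big(\bar b(0,p)\big), $$
where $\Theta^{i}_t$ is the time-$t$ flow of $\xi_i$. This is a smooth section of $\bs_U$, and $b(x)=\bar b(x)=u$ since at $x$ all flow parameters vanish. To conclude I would compute the two projections of $b$ using relatedness: the flow of $\partial_{s_i}$ on $I^k\times S_x$ is translation by $s_i$ in the $i$-th leaf coordinate, and these translations commute, so $\bs_U\circ b=\mathrm{id}_W$ and, starting from $\bt_U(\bar b(0,p))=(0,\tau(p))$, one gets $\bt_U(b(\vec{s},p))=(\vec{s},\tau(p))=\Psi(\vec{s},p)$. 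Thus $b$ is a bisection through $u$ carrying $\Psi$, which is precisely \eqref{diffnice}.

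The main obstacle, and the crux of the whole argument, is the existence of the simultaneous lift $\xi_i$: such a lift need not exist for an arbitrary element of $\cF$, since $(\bt_U,\bs_U)\colon U\to M\times M$ is in general not a submersion. It is exactly the membership $\partial_{s_i}\in\cF$ (these being leafwise directions in the splitting) that makes both one-sided lifts available and lets their sum $Y_i+Z_i$ do the job. I would finally take care of the routine bookkeeping — that shrinking $W$ keeps $b$ defined on all of $I^k\times S_x$ and keeps $\Psi(I^k\times S_x)$ inside the chart — by restricting to a smaller neighborhood of $x$, which is harmless since the splitting chart may be shrunk freely.
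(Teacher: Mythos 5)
Your proof is correct, and it reaches the conclusion by a genuinely different route from the paper's. The paper does not build $b$ inside $U$ directly: it introduces the auxiliary path-holonomy bi-submersion $V\subset\R^n\times W$ associated to the splitting generators, writes down on $V\circ U\circ V$ the explicit bisection $(\vec{s};p)\mapsto(\vec{s},\bt_U(\bar{b}(p)))\circ\bar{b}(p)\circ(-\vec{s},(\vec{s};p))$ --- conjugation of $\bar{b}$ by the translation bisections of $V$ --- and then transports it back to $U$ through a morphism of bi-submersions $f\colon V\circ U\circ V\to U$ supplied by \cite[Cor.~2.11]{AndrSk} (the outer factors carry the identity). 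You instead stay entirely inside $U$: for each leaf coordinate you manufacture $\xi_i=Y_i+Z_i$, simultaneously $\bs_U$- and $\bt_U$-related to $\partial_{s_i}$, by exploiting both halves of the decomposition $\bs_U^{-1}(\cF)=C^{\infty}_c(U;\ker d\bs_U)+C^{\infty}_c(U;\ker d\bt_U)$, and you sweep $\bar{b}$ along the flows. This is in effect the infinitesimal version of the paper's conjugation trick, but it dispenses with the auxiliary bi-submersion and with the appeal to the existence of $f$, and it has the small bonus that your $b$ literally restricts to $\bar{b}$ on $\{0\}\times S_x$ (which the paper's construction does not guarantee; cf.\ the footnote in the statement). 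Two minor remarks on your write-up, neither of which is a gap: your argument actually produces a simultaneous lift for \emph{every} element of $\cF$, not only for the $\partial_{s_i}$ --- what is special about the $\partial_{s_i}$ is rather that their flows downstairs are translations preserving the product structure, which is what makes $\bs_U\circ b=\mathrm{id}_W$ and $\bt_U\circ b$ equal to \eqref{diffnice}; and the relations $d\bt_U(Y_i)=\partial_{s_i}$, $d\bs_U(Z_i)=\partial_{s_i}$ only hold after localizing over $W$ and near the relevant fibres (a compactly supported field cannot be exactly related to a nowhere-vanishing one over a noncompact set), the same harmless abuse the paper itself commits, e.g.\ in Lemma~\ref{lem:notes}.
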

\begin{proof}

Choose generators of $\cF$ in a neighborhood $W$ of $x$ as in Prop. \ref{thm:splitting} (the splitting theorem), that is: the first elements $X_1,\dots,X_{k}$ restrict to  coordinate vector fields on $L$ and commute with all $X_i$'s, and the remaining  
%$\ell:=n-k$ 
elements $X_{k+1},\dots,X_n$ restrict to generators of $\cF_{S_x}$. 
Denote by $V\subset \R^n\times W$ the path-holonomy bi-submersion given by these generators. There exists a morphism of bi-submersions 
$$f\colon V\circ U\circ V\to U$$ such that $((0,x),u,(0,x))\mapsto u$.
Indeed, if we choose any bisection $b'$ of $U$ through $u$ and denote by $0$ the (constant) zero bisection of $V$, then the   bisections $0\circ b'\circ 0$ of $V\circ U\circ V$ and $b'$ of $U$ carry the same diffeomorphism, so we can apply  \cite[Lemma 2.11 b]{AndrSk}. Due to this, 
it suffices to construct a bisection of $V\circ U\circ V$ through $((0,x),u,(0,x))$ carrying the diffeomorphism \eqref{diffnice}, for composing it with $f$ we will obtain the desired bisection $b$.

Consider the following section of source map of $V\circ U\circ V$:
\begin{align*}
W &\mapsto \;\;\;\;\;\;\;\;\;\;\;\;\;V\;\;\;\circ \;\;U\;\;\circ V\\
z&\mapsto (\vec{s},\bt_U(\bar{b}(p)))\circ\bar{b}(p)\circ(-\vec{s},z).
\end{align*}
Here we write $z=(\vec{s};p)$ using  $W\cong I^k\times S_x$, and  on the right hand side $\vec{s}\in I^k$ is viewed as an element of 
$I^k\times\{0\}\subset \R^n$. Notice that $\bt_V(-\vec{s},z)=(\vec{0};p)\in S_x$. It is straightforward to check that this is a well-defined bisection of $V\circ U\circ V$ carrying the diffeomorphism \eqref{diffnice} and going  through the point $((0,x),u,(0,x))$.
\end{proof}
  
 \begin{thm}\label{thm:inj}
Let  $(M,\cF)$ be a manifold with a singular foliation.  Fix a point $x\in M$ and a slice $S_x$. Then
the map $\Phi_x^x \colon H_x^x \rightarrow  \frac{GermAut_{\cF}(S_x, S_x)}{exp(I_x \cF_{S_x}) }$   is injective.

 It follows that  $\Phi$ is injective.
 \end{thm}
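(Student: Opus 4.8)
The plan is to reduce the injectivity of $\Phi_x^x$ for an arbitrary singular foliation to the already-established case where the foliation vanishes at a point (Prop. \ref{prop:xinj}), using the splitting theorem and the trivial-extension lemma just proved. So suppose $h\in H_x^x$ satisfies $\Phi_x^x(h)=\langle Id_{S_x}\rangle$; we must show $h=1_x$. Pick a bi-submersion $(U,\bt_U,\bs_U)$ in the path-holonomy atlas and $u\in U$ with $[u]=h$, together with a section $\bar b\colon S_x\to U$ through $u$ whose associated $\tau:=\bt_U\circ\bar b$ maps $S_x$ into itself. The hypothesis $\Phi_x^x(h)=\langle Id_{S_x}\rangle$ means precisely that $\tau\in exp(I_x\cF_{S_x})$, i.e. $\tau$ is the time-one flow of a time-dependent vector field on $S_x$ lying in $I_x\cF_{S_x}$.

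Next I would invoke Lemma \ref{lem:trivext}: after fixing a splitting $W\cong I^k\times S_x$ near $x$ as in Prop. \ref{thm:splitting}, there is a bisection $b\colon W\to U$ through $u$ carrying the diffeomorphism $\psi\colon(\vec s,p)\mapsto(\vec s,\tau(p))$, the trivial extension of $\tau$ along the leaf directions. The point is that this global diffeomorphism $\psi$ of $W$ lies in $exp(I_x\cF)$: since $\tau=exp(\{Y_t\})$ with $Y_t\in I_x\cF_{S_x}$, its trivial extension $\widetilde Y_t$ (acting only in the $S_x$-directions, constant in $\vec s$) is a time-dependent vector field on $W$; by the splitting description of generators in Prop. \ref{thm:splitting}, each $\widetilde Y_t$ lies in $\cF_W$, and it vanishes at $x$ to the same order, so $\widetilde Y_t\in I_x\cF$ and $\psi=exp(\{\widetilde Y_t\})\in exp(I_x\cF)$.

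Having shown that $u\in U$ carries a diffeomorphism $\psi\in exp(I_x\cF)$, I would conclude exactly as in Prop. \ref{prop:xinj} and Prop. \ref{prop:reginj}. By Lemma \ref{lem:notes}, for any path-holonomy bi-submersion $V$ defined near $x$, the point $(x,0)\in V$ also carries $\psi$. Hence $u\in U$ and $(x,0)\in V$ carry the same local diffeomorphism, so by \cite[Cor. 2.11]{AndrSk} there is a morphism of bi-submersions defined near $u$ sending $u\mapsto(x,0)$. This gives $h=[u]=[(x,0)]=1_x$, proving injectivity of $\Phi_x^x$. Since $\Phi$ is a groupoid morphism covering $Id_M$, injectivity of every $\Phi_x^x$ yields injectivity of $\Phi$, as noted at the start of \S\ref{sec:inj}.

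The main obstacle, and the reason the argument needs Lemma \ref{lem:trivext} rather than working directly with $\tau$, is the passage from the germ datum on $S_x$ to an honest element of $exp(I_x\cF)$ carried on a full neighborhood $W$: a priori $\tau$ is only defined on the slice and its flow is not tangent to $W$ in a controlled way, so one cannot directly feed it into \cite[Cor. 2.11]{AndrSk}, which compares diffeomorphisms of open subsets of $M$. The trivial-extension construction resolves this by producing a genuine bisection of $U$ over $W$ carrying a diffeomorphism of $W$, and the splitting theorem is what guarantees that this extension stays inside $\cF$ and inside the ideal $I_x\cF$. Verifying that the extended time-dependent vector field indeed lands in $I_x\cF$ (and not merely in $\cF(x)$) is the delicate point, but it follows cleanly from the adapted generators supplied by Prop. \ref{thm:splitting}.
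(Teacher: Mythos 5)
Your proof is correct and follows essentially the same route as the paper's: extract $\tau\in exp(I_x\cF_{S_x})$ from the hypothesis, use Lemma \ref{lem:trivext} together with the splitting theorem to promote it to a bisection through $u$ carrying a diffeomorphism in $exp(I_x\cF)$, and then conclude via Lemma \ref{lem:notes} and \cite[Cor.\ 2.11]{AndrSk} that $[u]=[(x,0)]=1_x$. The only cosmetic difference is that you spell out why the trivially extended vector field lands in $I_x\cF$ (via the adapted generators), which the paper asserts more briefly.
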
 
 
\begin{proof}
We have
 to show that if $h\in {H}_x^x$ satisfies $\Phi(h)=[Id_{S_x}]$, then $h=1_x$. Let $U$ a bi-submersion in the path-holonomy atlas and $u\in U$ with $[u]=h$. Let $\bar{b}\colon S_x\to U$ any section of $\bs$ through $u$ such that $\tau:=\bt\circ \bar{b}$ maps $S_x$ to itself.
 Then, by assumption, $\tau\in exp(I_x\cF_{S_x})$, that is, $\tau$ is the time-one flow of a time-dependent vector field $\{Y_t\}_{t\in[0,1]}$ on $S_x$ lying in $I_x\cF_{S_x}$.
   Choose a diffeomorphism $W\cong I^k\times S_x$ as in  the splitting theorem. By Lemma \ref{lem:trivext} there exists a bisection $b\colon W\to U$ through $u$ such that the diffeomorphism $\tilde{\tau}:=\bt\circ b$   is the trivial extension of $\tau$ to every vertical slice. In particular $\tilde{\tau}$ is the time-one flow of the time-dependent vector field on $W$ obtained extending the $Y_t$
trivially to every vertical slice, and therefore  $\tilde{\tau}  
\in exp(I_x\cF)$. 

By Lemma \ref{lem:notes}, for any path-holonomy bi-submersion $V$ defined near $x$, the element $(x,0)\in V$ carries the diffeomorphism 
$\tilde{\tau}$. As this is also the diffeomorphism carried by $b$, it follows from \cite[Prop. 2.11 b]{AndrSk} that there is a morphism of bi-submersions $U\to V$ with $u\mapsto (x,0)$, therefore $[u]=[(x,0)]=1_x$.

\end{proof}

\section{Linear  Holonomy Transformations}\label{sec:lht}

In this section we consider two notions  obtained differentiating, over a leaf $L$, the  
 map $\Phi \colon H \to \{\text{holonomy transformations}\}$ defined in Thm. \ref{globalaction}. In \S \ref{sec:lintrafo}
we consider $\Phi|_{H_L}$ as  a groupoid action and linearize it, obtaining  a groupoid representation of $H_L$ on $NL$ (first order holonomy). In \S \ref{subsection:linholrep} we differentiate  the latter to a  representation of the Lie algebroid $A_L$ on $NL$.

\subsection{Lie groupoid representations by Linear  Holonomy Transformations}\label{sec:lintrafo}
 
Here we linearise the holonomy transformations arising in  Theorem \ref{globalaction}. The result, on every leaf $L$, can be phrased as a representation (linear groupoid action) of $H_L$ on the   vector bundle $NL:=\cup_{x\in L}N_xL$. We also give a linear groupoid action of $H_L$ on the Lie algebra bundle $\g_L=\cup_{x\in L}{\g_x}$ (\cf \cite[\S 1.3]{AnZa11}) which we interpret as the adjoint representation.

 \begin{prop} \label{globallinaction}
Let $x,y$ be points of $(M,\cF)$ lying in the same leaf $L$. 

1) There is a canonical  map 
\begin{align}\label{Psixy}
\Psi_x^y \colon H_x^y &\to Iso(N_xL,N_yL).%h \mapsto d_x\tau
\nonumber
\end{align}
defined as follows:  
 \begin{itemize}
\item Given $h\in H_x^y$ take any bi-submersion $(U,\bt,\bs)$ in the path-holonomy atlas with a point $u\in U$ satisfying $[u]=h$,
\item  define $$\Psi_x^y(h) \colon N_xL \to N_yL, \; [v]\mapsto [\bt_*(\tilde{v})]$$
 where $\tilde{v}\in T_uU$ is any $\bs_*$-lift of $v\in T_xM$.
\end{itemize}

2) We have $\Psi_x^y(h)= d_x\tau$, 
where the diffeomorphism $\tau$ is chosen as in Theorem \ref{globalaction}, for any choice of transversals $S_x,S_y$ and
using the canonical identifications $N_xL\cong T_xS_x$, $N_yL\cong T_yS_y$.
\end{prop}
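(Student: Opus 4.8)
The plan is to prove the two assertions of Proposition~\ref{globallinaction} in turn: first that $\Psi_x^y$ is well-defined (independent of the choices of $U$, $u$ and the lift $\tilde v$) and linear, and second that it agrees with $d_x\tau$ under the canonical identifications.

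\textbf{Well-definedness and linearity.} First I would check that the formula $[v]\mapsto [\bt_*(\tilde v)]$ makes sense. Since $\bs\colon U\to M$ is a submersion, any $v\in T_xM$ admits an $\bs_*$-lift $\tilde v\in T_uU$, and any two such lifts differ by an element of $\ker d_u\bs$. By the defining property (ii) of a bi-submersion, $C^\infty_c(U;\ker d\bs)\subset \bs^{-1}(\cF)=\bt^{-1}(\cF)$, so $\bt_*$ maps $\ker d_u\bs$ into $F_y=T_yL$; hence the class $[\bt_*(\tilde v)]\in N_yL=T_yM/F_y$ is independent of the lift. A symmetric argument, using that $\bt_*$ sends $\ker d_u\bt$ to $F_y$ while those vectors are $\bs_*$-lifts of elements of $F_x$, shows that $\Psi_x^y(h)$ descends to the quotient $N_xL=T_xM/F_x$: if $v\in F_x$, choose the lift $\tilde v$ tangent to $\ker d\bt$ (possible because $\bs_*$ restricted to $\ker d_u\bt$ surjects onto $F_x$, again by property (ii)), so that $\bt_*(\tilde v)=0$. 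Independence of the representative $u\in U$ and of the bi-submersion $U$ follows from \cite[Cor. 2.11(b)]{AndrSk}: if $u'\in U'$ also represents $h$, there is a morphism of bi-submersions $g$ with $g(u)=u'$, and since $\bs'\circ g=\bs$, $\bt'\circ g=\bt$, the induced map on tangent spaces intertwines the two constructions, giving the same class. Linearity is immediate, since $v\mapsto\tilde v\mapsto \bt_*(\tilde v)$ can be realized by a genuine linear map $T_uU\to T_yM$ composed with linear projections, and invertibility follows by applying the same construction to $h^{-1}$ (using the inverse bi-submersion), which produces a two-sided inverse.

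\textbf{Agreement with $d_x\tau$.} For the second part, I would fix transversals $S_x,S_y$ and a section $\bar b\colon S_x\to U$ through $u$ of $\bs$ with $(\bt\circ\bar b)(S_x)\subset S_y$, so that $\tau=\bt\circ\bar b$ as in Theorem~\ref{globalaction}. The key observation is that $\bar b$ gives a canonical choice of $\bs_*$-lift: for $v\in T_xM$ whose class in $N_xL$ we identify with a vector $w\in T_xS_x$ (via $N_xL\cong T_xS_x$, valid because $S_x$ is transverse to $L$), the vector $d_x\bar b(w)\in T_uU$ is an $\bs_*$-lift of $w$ since $\bs\circ\bar b=\id_{S_x}$. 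Applying $\bt_*$ gives $d_x(\bt\circ\bar b)(w)=d_x\tau(w)$, and projecting to $N_yL\cong T_yS_y$ recovers exactly $d_x\tau$ read through the identification. Because $\Psi_x^y(h)$ is independent of the lift by part~1), this computes $\Psi_x^y(h)$, establishing $\Psi_x^y(h)=d_x\tau$.

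\textbf{Main obstacle.} I expect the principal technical point to be the verification that $\Psi_x^y$ descends to a map on $N_xL$ rather than merely on all of $T_xM$, i.e.\ that the construction kills $F_x$; this is where property~(ii) of bi-submersions must be used twice (once for each of $\ker d\bs$ and $\ker d\bt$) and where one must be careful about which lift to choose. The compatibility with $d_x\tau$ is then essentially bookkeeping with the canonical identifications $N_xL\cong T_xS_x$, provided one has already observed that a section $\bar b$ of $\bs$ furnishes a preferred lift. I would also remark that well-definedness of the \emph{linearization} is precisely what fails for the coarser equivalence relation of Proposition~\ref{holwelldef} and is made possible by quotienting only by $exp(I_x\cF_{S_x})$, whose elements have trivial first-order part at $x$.
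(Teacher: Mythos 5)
Your proof is correct and follows essentially the same route as the paper's: well-definedness via $\bt_*(\ker d_u\bs)\subset T_yL$ together with morphisms of bi-submersions for independence of $(U,u)$, and part 2) via the observation that the section $\bar b$ furnishes a preferred $\bs_*$-lift so that $\bt_*\circ d_x\bar b=d_x\tau$. You are in fact slightly more explicit than the paper in verifying that the construction descends to the quotient $N_xL$ (killing $F_x$ by choosing lifts in $\ker d_u\bt$), but this is the same argument, just spelled out.
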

\begin{proof}
1) Fixing $u\in U$ as above, the map $N_xL \to N_yL, \; [v]\mapsto [\bt_*(\tilde{v})]$ is well-defined since $\bt(\bs^{-1}(x))=L$ implies that $\bt_*(Ker(d_u\bs))=T_yL$. The map
$\Psi_x^y(h)$ is independent of the choice of bi-submersion $U$ and of $u$ since, for any other choice $U'$ and $u'$, there exists by definition a  morphism of bi-submersions $U\to U'$ mapping $u$ to $u'$.

2) The diffeomorphism $\tau\colon S_x \to S_y$ can be described as follows: take any (local) $\bs$-section ${b}\colon M \to U$   through $u$ such that the local diffeomorphism $\bt\circ  {b}$ of $M$ maps $S_x$ into $S_y$, and restrict $\bt\circ  {b}$ to the slice $S_x$. Now choose $v\in T_xS_x$. Then $\tilde{v}:={b}_* (v) \in T_uU$ is an $\bs_*$-lift of $v$, so $\bt_*(\tilde{v})=d_x\tau(v)$.
\end{proof}
\begin{remark}
Prop. \ref{globallinaction} 2)  says that $\Psi_x^y(h)$ can be regarded as the ``derivative at $x$'' of  $\Phi_x^y \in \frac{GermAut_{\cF}(S_x, S_y)}{exp(I_x \cF_{S_x}) }$.
(Even though $\Phi_x^y(h)$ is   an equivalence class of maps, its derivative at $x$ is well-defined by Lemma \ref{derid}.)
\end{remark}
 
\begin{cor}\label{cor:psiL}
%Fix a leaf $L$. 
For every leaf $L$ the map 
\begin{center}
\fbox{\begin{Beqnarray*}
\Psi_L \colon H_L \to Iso(NL,NL)
 \end{Beqnarray*}}
\end{center}
obtained assembling the  maps $\Psi_x^y$ for all $x,y\in L$, is a  groupoid morphism.  When $H_L$ is smooth, $\Psi_L$ is a Lie groupoid morphism, i.e. a representation of   $H_L$ on the vector  bundle $NL$.
\end{cor}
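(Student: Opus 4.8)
The plan is to prove Corollary \ref{cor:psiL} by verifying the two groupoid morphism axioms (compatibility with composition and with units/inverses) and then upgrading to smoothness. First I would check the functoriality (multiplicativity) of $\Psi_L$. Given composable arrows $h\in H_x^y$ and $h'\in H_y^z$, I would represent them by points $u\in U$ and $u'\in U'$ in path-holonomy bi-submersions with $\bs_{U'}(u')=\bt_U(u)=y$, so that $(u',u)\in U'\circ U$ represents $h'\circ h$. Using the definition $\Psi_x^y(h)([v])=[\bt_*\tilde v]$ from Prop. \ref{globallinaction}, I would take an $\bs_*$-lift $\tilde v\in T_uU$ of $v\in T_xM$, push it forward to $\bt_*\tilde v\in T_yM$, then lift this to $T_{u'}U'$ and push forward again; the point is that the tangent map of the projection $U'\circ U\to U'$ sends the canonical lift at $(u',u)$ to a lift of $\bt_*\tilde v$, so $\Psi_y^z(h')\circ\Psi_x^y(h)=\Psi_x^z(h'\circ h)$ directly from the chain rule applied to $\bt_{U'\circ U}=\bt_{U'}\circ \pr_1$. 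Compatibility with units is immediate since the identity bi-submersion element $1_x=[(x,0)]$ carries $\id$, whose derivative is $\id_{N_xL}$; invertibility then follows formally once multiplicativity and units are established.

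The key remaining point is that each $\Psi_x^y(h)$ is a well-defined \emph{isomorphism} (not merely a linear map), but this is already contained in Prop. \ref{globallinaction} 2): identifying $N_xL\cong T_xS_x$ and $N_yL\cong T_yS_y$, we have $\Psi_x^y(h)=d_x\tau$ where $\tau\colon S_x\to S_y$ is a local diffeomorphism, hence $d_x\tau$ is invertible. Thus the assembled map $\Psi_L\colon H_L\to Iso(NL,NL)$ genuinely lands in the groupoid of linear isomorphisms between fibers, and the above shows it is a morphism of set-theoretic groupoids over $L$.

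The second half is the smoothness upgrade: assuming $H_L$ is smooth (hence, by the discussion preceding Thm. \ref{ALintegr}, a Lie groupoid with $\sharp\colon U_L\to H_L$ a submersion for bi-submersions in the path-holonomy atlas), I would show $\Psi_L$ is smooth. The natural strategy is to work on a bi-submersion $U$ where everything is manifestly smooth, and then descend along $\sharp$. Concretely, the recipe $[v]\mapsto[\bt_*\tilde v]$ depends smoothly on $u\in U_L$: choosing a smooth local $\bs$-section gives a smooth family of lifts $\tilde v$, and $\bt_*$ is smooth, so the induced map $U_L\to Iso(NL,NL)$, $u\mapsto \Psi_{\bs(u)}^{\bt(u)}([u])$, is smooth. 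Since this map is constant on $\sim$-equivalence classes (as $\Psi$ is well-defined on $H_L$) and $\sharp$ is a surjective submersion, $\Psi_L$ descends to a smooth map on $H_L$. A representation in the sense of a Lie groupoid acting linearly on the vector bundle $NL\to L$ is precisely a smooth groupoid morphism into $Iso(NL,NL)$, so this completes the claim.

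I expect the main obstacle to be the smoothness descent rather than the algebraic multiplicativity. The delicate part is ensuring that the linear holonomy map $U_L\to Iso(NL,NL)$ built from lifts is genuinely smooth as a section of the appropriate $Hom$-bundle, and then invoking that a smooth map constant on the fibers of the submersion $\sharp$ factors smoothly through the quotient $H_L$; care is needed because $NL\to L$ is a bundle over $L$ while $Iso(NL,NL)$ is a bundle over $L\times L$, and one must check the pullbacks along $\bs,\bt$ match up smoothly over $U_L$. The multiplicativity and unit axioms, by contrast, reduce to the chain rule on composite bi-submersions and should go through cleanly.
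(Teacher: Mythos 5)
Your proposal is correct and follows essentially the same route as the paper: the multiplicativity is checked exactly by the paper's "direct" argument (lifting a tangent vector through the composite bi-submersion $U\circ V$ and applying the chain rule to the target maps), and the smoothness is obtained, as in the paper, by observing that the lift construction gives a smooth map on $U_L$ and descending along the quotient map $\sharp$, using that the differentiable structure on $H_L$ is induced by the bi-submersions. No gaps.
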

\begin{proof}
We first show that $\Psi_L$ is a morphism of set-theoretic groupoids. One way to deduce this is to use the fact that
$\Phi$ is a groupoid morphism (Cor. \ref{groidmap}) and then Prop. \ref{globallinaction} 2).

An alternative, direct proof is as follows. Let $g,h$ be composable elements of $H_L$ and $\xi\in T_{\bs(h)}M$. Let $u$ be a point of a bi-submersion $(U,\bt^U,\bs^U)$ (of the path-holonomy atlas) representing $g$, and similarly let $v\in V$ represent $h$. Then $gh\in H$ is represented by $(u,v)\in U \circ V$. We have
$$\Psi(gh)[\xi]=[\bt^{U\circ V}_* (X)]=[\bt^U_*(Y)]$$
where $X\in T_{(u,v)}(U\circ V)$ is a $\bs^{U\circ V}_*$-lift of $\xi$, hence $X=(Y,Z)$ where $Z$ is a $\bs^{V}_*$-lift of $\xi$ and $\bs_*^UY=\bt_*^V Z$.
On the other hand, $$\Psi(g)(\Psi(h)[\xi])= [\bt^U_*(\text{a $\bs^U_*$-lift of }\bt^V_*Z)]=[\bt^U_*(Y)].$$

Now assume that $H_L$ is smooth. For the smoothness of the map $\Psi_L$ we argue as follows.  For every bi-submersion $U$, the following locally defined map is smooth: $U_L \times NL \to NL, (u,[v])\mapsto [\bt^U_*(\tilde{v})]$,
where $\tilde{v}\in T_uU$ is any $\bs_*$-lift of $v\in T_{\bs(u)}M$. We conclude recalling that  the differentiable structure on $H_L$ is induced by the bi-submersions, see \cite[\S 2.4]{AnZa11}.
\end{proof}

 \begin{remark}\label{Psiacts}
Notice \cite[\S4]{We}\cite[\S 1.2]{CrStr} that given any Lie groupoid $\Gamma$ over a manifold $M_{\Gamma}$ and a leaf   $L_{\Gamma}\subset M_{\Gamma}$, the restriction of $\Gamma$ to the leaf acts\footnote{See eq. \eqref{eq:glln}. The map in Prop. \ref{globallinaction} 1) is obtained adapting the definition of this action to the setting of bi-submersions.} linearly on $NL_{\Gamma}$. However this can not used to  obtain the action of $H_L$ on $NL$ described in Cor. \ref{cor:psiL}, since $H$ is usually not a Lie groupoid. 
\end{remark}

\begin{eps}\label{exlin}
We calculate the linear holonomy at the origin for Examples \ref{exglo},
by taking the derivative at the origin of the holonomy transformations obtained there.
\begin{enumerate}[(i)]
\item Let $M=\R$ and $\cF$ be generated by $X:=z {\partial_z}$. Then $\Psi_0^0[(\lambda,0)] \in  Iso(T_0M, T_0M)$ is $e^{\lambda}Id_{T_0M}$.

\item Let   $M=\R$ and $\cF$ be generated by $X:=z^2 {\partial_z}$. Then $\Psi_0^0[(\lambda,0)] \in  Iso(T_0M, T_0M)$ is given by $Id_{T_0M}$.

\item
%\label{exlin} 
We consider  the $S^1$-action on $M=\R^2$ by rotation.  Then $\Psi_0^0[(\lambda_0,0)]\in Iso(\R^2,\R^2)$ is given by  $\left(\begin{smallmatrix}  cos(\lambda_0) & -sin(\lambda_0)\\
 sin(\lambda_0) & cos(\lambda_0) 
   \end{smallmatrix}\right).$     
\end{enumerate}
\end{eps}

\subsubsection{Adjoint representations}\label{section:adjoint}

The holonomy groupoid $H$ also acts on the isotropy Lie algebras of the foliation. Recall that by Cor. \ref{ALintegr}  the restriction $H_L$ of the holonomy groupoid $H$ to a leaf $L$ is a Lie groupoid. Here we show that the restriction to $L$ of this action is the usual adjoint representation of the Lie groupoid $H_L$ (Cor. \ref{cor:adj}).
  
The action of $H$   on the isotropy {Lie algebras} is described in the next proposition, which will be proven in appendix \ref{holproofs}. Notice that when the foliation $\cF$ is regular, we have $\g_x=\{0\}$ for all $x\in M$, so the proposition is vacuous.

\begin{prop}\label{globalactionong}
Let $x,y$ belong to the same leaf of a singular foliation. There is a canonical, well-defined map  
\begin{align}
%D\Phi_x^y 
\widehat{\Psi}_x^y \colon H_x^y \rightarrow Iso(\g_x, \g_y).
%, \quad h \mapsto [{\phi_*} ]
\end{align}
Under the  identification 
$\g_x\cong \cF_{S_x}/I_x\cF_{S_x}$ 
%and $\g_y\cong \cF_{S_y}/I_y\cF_{S_y}$ 
given by any slice $S_x$ at $x$ (see \cite[Rem. 1.6]{AnZa11}) and an analogue identification at $y$, the element  $h\in H_x^y$ acts by
 \begin{equation}\label{gxgymap}
[Y]\in \cF_{S_x}/I_x\cF_{S_x} \mapsto [\tau_* Y]\in \cF_{S_y}/I_y\cF_{S_y},
\end{equation}
 where the diffeomorphism $\tau \colon S_x \to S_y$ associated to $h$ is chosen as in Theorem \ref{globalaction}. 
 \end{prop}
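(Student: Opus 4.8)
The plan is to \emph{define} the map by the formula \eqref{gxgymap} and then to check that it is well defined and an isomorphism, the only delicate point being the independence of the choice of the representing diffeomorphism $\tau$.

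First I would record the algebraic meaning of the quotient. Since $S_x$ is transverse to the leaf $L$, the splitting theorem (Prop. \ref{thm:splitting}) shows that every vector field in $\cF_{S_x}$ vanishes at $x$, i.e. $\cF_{S_x}=\cF_{S_x}(x)$; hence $\cF_{S_x}/I_x\cF_{S_x}$ is precisely the isotropy Lie algebra of $(S_x,\cF_{S_x})$ at $x$, canonically identified with $\g_x$ by \cite[Rem. 1.6]{AnZa11}. Given $h\in H_x^y$, pick $u$, $\bar b$ and $\tau=\bt\circ\bar b$ as in Theorem \ref{globalaction}. As $\tau$ is the restriction to $S_x$ of a foliation-preserving local diffeomorphism and $\tau(x)=y$, its pushforward $\tau_*$ carries $\cF_{S_x}$ isomorphically onto $\cF_{S_y}$ and $I_x$ onto $I_y$, hence $I_x\cF_{S_x}$ onto $I_y\cF_{S_y}$. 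Therefore $\tau_*$ descends to a Lie algebra isomorphism $\cF_{S_x}/I_x\cF_{S_x}\to\cF_{S_y}/I_y\cF_{S_y}$, which under the above identifications is the desired $\widehat\Psi_x^y(h)\colon\g_x\to\g_y$ of \eqref{gxgymap}.

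The crux is well-definedness. By Theorem \ref{globalaction} the diffeomorphism $\tau$ is determined only up to right composition with some $\psi\in exp(I_x\cF_{S_x})$, so I must show that every such $\psi$ induces the identity on $\cF_{S_x}/I_x\cF_{S_x}$, i.e. $\psi_*Y-Y\in I_x\cF_{S_x}$ for all $Y\in\cF_{S_x}$. Write $\psi=\phi_1$, the time-one flow of a time-dependent vector field $\{Z_t\}_{t\in[0,1]}$ in $I_x\cF_{S_x}$; since each $Z_t$ vanishes at $x$, the flow fixes $x$ and preserves $\cF_{S_x}$. From $\frac{d}{dt}(\phi_t)_*Y=-[Z_t,(\phi_t)_*Y]$ I obtain
\[
\psi_*Y-Y=-\int_0^1[Z_t,(\phi_t)_*Y]\,dt.
\]
Now $(\phi_t)_*Y\in\cF_{S_x}$ still vanishes at $x$ (as $\phi_t(x)=x$ and $Y_x=0$), while $Z_t=\sum_j f_j^tW_j$ with $f_j^t\in I_x$ and $W_j\in\cF_{S_x}$. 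The Leibniz rule $[Z_t,W]=\sum_j f_j^t[W_j,W]-\sum_j(Wf_j^t)W_j$ shows that each integrand lies in $I_x\cF_{S_x}$: the first sum because $\cF_{S_x}$ is bracket-closed and $f_j^t\in I_x$, the second because $W=(\phi_t)_*Y$ vanishing at $x$ forces $Wf_j^t\in I_x$. Expressing the integrand in a fixed finite generating set of $\cF_{S_x}$ with coefficients in $I_x$ and integrating in $t$ leaves the coefficients in $I_x$, so the whole integral lies in $I_x\cF_{S_x}$, as required.

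Finally I would observe that $\widehat\Psi_x^y(h)$ is invertible with inverse induced by $\tau^{-1}$ (a representative of $\Phi_y^x(h^{-1})$), that the maps assemble into a groupoid morphism exactly as for $\Phi$ in Corollary \ref{groidmap} (using functoriality of pushforward and that conjugation by $\tau$ maps $exp(I_x\cF_{S_x})$ into $exp(I_y\cF_{S_y})$), and that independence of the chosen slices follows as in Lemma \ref{immaterial}: changing slices replaces $\tau$ by $\psi^{21}_y\circ\tau\circ\psi^{12}_x$ with factors in $exp(I_x\cF)$ and $exp(I_y\cF)$, which act trivially on $\g_x$ and $\g_y$ by the computation above. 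The main obstacle is precisely this computation: one must control $\psi_*Y-Y$ not merely modulo $\cF_{S_x}$ but inside the submodule $I_x\cF_{S_x}$, and it is the vanishing $Y_x=0$ (valid because $\cF_{S_x}=\cF_{S_x}(x)$) that makes the bracket land in the ideal.
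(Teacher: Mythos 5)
Your proposal is correct and follows essentially the same route as the paper: the map is defined by pushforward along a representative $\tau$, and the crux — that $\psi\in exp(I_x\cF_{S_x})$ satisfies $\psi_*Y-Y\in I_x\cF_{S_x}$ for $Y\in\cF_{S_x}$ — is exactly the content of the paper's Lemma \ref{ypsiy} (applied to $(S_x,\cF_{S_x})$), which you re-derive inline via the same integral formula for $\frac{d}{dt}(\phi_t)_*Y$ and the same Leibniz-rule argument exploiting $Y_x=0$ and $Z_t\in I_x\cF_{S_x}$. The treatment of slice-independence via Lemma \ref{immaterial} and the triviality of the $(\psi^{12}_x)_*$-action also matches the paper.
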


 \begin{remark}\label{remark:actlinfol}
Let $L$ be a leaf of $(M,\cF)$ and $S_x$ a transversal to this leaf at $x$. In \S \ref{subsection:linfolnormal} we introduce the linearisation of the transversal foliation $\cF_{S_x}$; it is a foliation $ \cF_{lin}|_{N_xL}$ defined on the normal space $N_x L$.  For any $h\in H$, writing $x=\bs(h)$ and $y=\bt(h)$, the isomorphism $\Psi_{x}^{y} \colon N_xL \to N_yL$ maps the linearised foliation $ \cF_{lin}|_{N_xL}$  to $ \cF_{lin}|_{N_yL}$. This follows from the fact that the diffeomorphisms $\tau \colon S_x\to S_y$ map the foliation $\cF_{S_x}$ to $\cF_{S_y}$. As we show in lemma \ref{gxlin}, the Lie algebra $\g_x$ carries more information than the foliation $ \cF_{lin}|_{N_xL}$.
\end{remark}

For every leaf $L$, assembling the maps constructed in Prop. \ref{globalactionong} we obtain a map $\widehat{\Psi}_L\colon H_L \to Iso(\g_L)$. It is a  groupoid morphism.  (This follows from the fact that if  $\tau_i$ is a diffeomorphism associated to $h_i$ as in Theorem \ref{globalaction}, $i=1,2$, then $\tau_1\circ\tau_2$ is a diffeomorphism associated to $h_1 h_2$.)

%Proposition \ref{globalactionong} is not a surprise, despite that $H$ is usually not a Lie groupoid: 
Recall that given a transitive Lie groupoid $\Gamma$ over a manifold $L$  and  points $x,y\in L$, there is a map $\Gamma_x^y \to Hom(\Gamma_x^x,\Gamma_y^y)$ given by conjugation: an element $h\in \Gamma_x^y $ is mapped to the homomorphism $I_h : \gamma  \mapsto h \gamma h^{-1}$ (where $\gamma\in \Gamma_x^x$). Hence by differentiation we obtain a map $$\Gamma_x^y \rightarrow Iso(T_x\Gamma_x^x, T_y\Gamma_y^y), \;\;h\mapsto d_x(I_{h}).$$
%, where $\g_x,\g_y$ are the isotropy Lie algebras of $\Gamma$ at $x,y$. 
We refer to the resulting representation of $\Gamma$ on $\cup_{x\in L}T_x\Gamma_x^x$ as
  the \emph{adjoint representation of $\Gamma$}. The next two statements show that the representation defined in Prop. \ref{globalactionong} is equivalent to the adjoint representation of $H_L$, provided the leaf $L$ satisfies regularity conditions.
%Here is how the map of Proposition \ref{globalactionong} can be realized as the adjoint in this sense:
\begin{prop}\label{prop:adjoint}
Consider the connected and simply connected Lie group $G_x$ integrating $\g_x$. Let $E \colon \g_x \to H_x^x$ the composition of the exponential map $exp : \g_x \to G_x$ with the map $\varepsilon : G_x \to H_x^x$ discussed in  \S \ref{sec:essiso}. For every $h \in H_x^y$ put $I_h(\gamma) = h\gamma h^{-1}$. Then the following diagram commutes:
\begin{align}\label{diagE}
\xymatrix{
\g_x \ar[d]_{E}  \ar[r]^{\widehat{\Psi}_x^y(h)} &  \g_y  \ar[d]^{E}  \\ 
H_x^x  \ar[r]_{I_h} &  H_y^y
}
\end{align}
\end{prop}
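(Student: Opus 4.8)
The plan is to exploit the injectivity of $\Phi$ (Thm.~\ref{thm:inj}) to reduce the desired identity $E(\widehat{\Psi}_x^y(h)(\xi))=I_h(E(\xi))$ in $H_y^y$, for $\xi\in\g_x$, to an equality of holonomy transformations. The heart of the matter is a concrete description of the composition $\Phi\circ E$: \emph{for $\xi=[X]\in\g_x$ represented by a vector field $X\in\cF_{S_x}$ tangent to $S_x$, the holonomy transformation $\Phi_x^x(E(\xi))$ is the class of the time-one flow $exp(X)\colon S_x\to S_x$.} This is well defined: replacing $X$ by $X+Z$ with $Z\in I_x\cF_{S_x}$ changes $exp(X)$ only by an element of $exp(I_x\cF_{S_x})$, exactly as in the proof of Prop.~\ref{holwelldef}.

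To establish this description I would use that $\sharp\circ\Delta$ is independent of the chosen path-holonomy bi-submersion (\S\ref{sec:essiso}) and pick one adapted to the splitting theorem (Prop.~\ref{thm:splitting}): generators $X_1,\dots,X_n$ of $\cF$ near $x$ whose last generators, tangent to $S_x$, induce a basis of $\g_x\cong\cF_{S_x}/I_x\cF_{S_x}$. Writing $X=\sum k_iX_i$ with these coefficients, let $U\subset M\times\R^n$ be the corresponding bi-submersion and $Y_i$ the chosen $\bt$-lifts of $X_i$, so that $E(\xi)=\sharp(u_X)$ with $u_X=exp_{(x,0)}(\sum k_iY_i)$. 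The map $b\colon S_x\to U,\ z\mapsto exp_{(z,0)}(\sum k_iY_i)$ is a section of $\bs$ through $u_X$, because the $Y_i$ lie in $\ker d\bs$; hence by Thm.~\ref{globalaction} the transformation $\Phi_x^x(E(\xi))$ is the class of $\bt\circ b$. Since each $Y_i$ is $\bt$-related to $X_i$ and the $k_i$ are constant, $\sum k_iY_i$ is $\bt$-related to $X$, so $\bt\bigl(exp_{(z,0)}(\sum k_iY_i)\bigr)=exp_z(X)$, that is $\bt\circ b=exp(X)$, which maps $S_x$ to $S_x$ as $X$ is tangent to $S_x$. For $\xi$ so large that $u_X$ leaves $U$, I would reduce to the small case by multiplicativity of $E=\varepsilon\circ exp_{G_x}$: writing $exp_{G_x}(\xi)=exp_{G_x}(\xi/m)^m$ yields $E(\xi)=E(\xi/m)^m$, whence $\Phi(E(\xi))=[\,exp(X/m)\,]^m=[\,exp(X)\,]$ for $m$ large, since the $m$-fold composition of $exp(X/m)$ is $exp(X)$.

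With this description in hand I would compute both legs of the diagram under $\Phi$. Using $\widehat{\Psi}_x^y(h)(\xi)=[\tau_*X]$ (Prop.~\ref{globalactionong}), where $\tau\colon S_x\to S_y$ represents $h$ as in Thm.~\ref{globalaction}, the description applied at $y$ gives
\[
\Phi_y^y\bigl(E(\widehat{\Psi}_x^y(h)(\xi))\bigr)=[\,exp(\tau_*X)\,].
\]
On the other hand, since $\Phi$ is a groupoid morphism (Cor.~\ref{groidmap}) with $\Phi(h)=[\tau]$,
\[
\Phi_y^y\bigl(I_h(E(\xi))\bigr)=[\tau]\,[\,exp(X)\,]\,[\tau]^{-1}=[\,\tau\circ exp(X)\circ\tau^{-1}\,]=[\,exp(\tau_*X)\,],
\]
the last step being the conjugation-of-flows identity $\tau\circ exp(X)\circ\tau^{-1}=exp(\tau_*X)$, valid because $\tau$ preserves $\cF$ and sends $\cF_{S_x}$ to $\cF_{S_y}$. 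The two legs therefore have equal image under $\Phi_y^y$, and injectivity (Thm.~\ref{thm:inj}) forces $E(\widehat{\Psi}_x^y(h)(\xi))=I_h(E(\xi))$, so the diagram commutes.

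I expect the main obstacle to be the Key Lemma, i.e.\ reconciling the two rather different definitions of $E$ (via $\Delta$, $\varepsilon$, $\sharp$) and of $\Phi$ (via bisections): the care lies in choosing generators tangent to $S_x$ so the flow stays on $S_x$, in verifying that $b$ is genuinely an $\bs$-section through $u_X$, and in passing from small to arbitrary $\xi$ by multiplicativity. Once $\Phi\circ E$ is identified with the flow map, the remainder is a short application of the morphism and injectivity properties of $\Phi$.
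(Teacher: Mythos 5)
Your proposal is correct, but it is organized differently from the paper's proof. The paper does not pass through $\Phi$ at all: it represents the two sides of the diagram by explicit points of bi-submersions, namely $v_1=exp_{(y,0)}(\sum k_iY_i')\in U'$ (for the path-holonomy bi-submersion $U'$ built from $X_i':=\phi_*X_i$) and $v_2=(w,\,exp_{(x,0)}(\sum k_iY_i),\,w)\in W\circ U\circ\overline{W}$, exhibits bisections through each carrying the \emph{identical} local diffeomorphism $z\mapsto exp_z(\sum k_i\phi_*X_i)$ of an open neighborhood $\mathcal{O}_y$ of $y$ in $M$, and concludes directly from \cite[Cor.\ 2.11]{AndrSk} that $[v_1]=[v_2]$ in $H_y^y$. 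Your route instead establishes the intermediate identity $\Phi_x^x(E(\xi))=[exp(X)|_{S_x}]$ (whose verification is essentially the same bisection computation, restricted to slices), pushes both legs of the diagram through the groupoid morphism $\Phi$, and then invokes the injectivity of $\Phi_y^y$ (Thm.\ \ref{thm:inj}) to descend back to $H_y^y$; both arguments handle large $\xi$ by the same multiplicativity reduction. The trade-off: your version is more modular and makes the compatibility of $E$, $\widehat{\Psi}$ and $\Phi$ explicit, and since Thm.\ \ref{thm:inj} precedes this proposition there is no circularity; but it leans on the injectivity theorem, which is a substantial result and is logically unnecessary here --- the paper proves a stronger coincidence (equality of honest local diffeomorphisms on an open set, not merely of germs on a slice modulo $exp(I_y\cF_{S_y})$), so its argument would survive even without injectivity of $\Phi$. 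One small point worth making explicit in your Key Lemma: the adapted generators you choose (those inducing the basis of $\g_x$ being tangent to $S_x$) automatically vanish at $x$, which is what the formula for $E|_{\tilde\g_x}$ from \S\ref{sec:essiso} requires, and the independence of $\varepsilon$ from the chosen path-holonomy bi-submersion is what licenses this choice.
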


\begin{cor}\label{cor:adj}
Let $(M,\cF)$ be a singular foliation and $L$ a  leaf. Fix $x,y\in L$.
Then  the Lie groupoid representation 
$\widehat{\Psi}_L\colon H_L \to Iso(\g_L)$
is equivalent to the adjoint representation of $H_L$.
% integrates the Lie algebroid $A_L$.
\end{cor}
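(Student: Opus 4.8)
The goal is to show that the groupoid representation $\widehat{\Psi}_L$ of Prop. \ref{globalactionong} coincides with the adjoint representation of the Lie groupoid $H_L$. The plan is to unwind what ``adjoint representation'' means in terms of the data $\g_x$ and $E$, and then to show that Prop. \ref{prop:adjoint} supplies exactly the needed compatibility. Since the essential isotropy of $L$ is discrete, Thm. \ref{ALintegr} guarantees that $H_L$ is a smooth (Lie) groupoid integrating $A_L=\cup_{x\in L}\cF_x$, so all the objects involved are genuine Lie-theoretic ones and differentiation is legitimate.

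First I would pin down the identification of the adjoint representation's carrier bundle $\cup_{x\in L}T_x H_x^x$ with $\g_L=\cup_{x\in L}\g_x$. Discreteness of the essential isotropy means precisely that the kernel of $\varepsilon\colon G_x\to H_x^x$ is discrete, so $\varepsilon$ is a local diffeomorphism near the identity and the Lie algebra of $H_x^x$ is canonically $\g_x$. Concretely the map $E\colon \g_x\to H_x^x$ (the composite of $exp$ with $\varepsilon$) is a local diffeomorphism at $0$, hence $d_0E\colon \g_x\to T_xH_x^x$ is the canonical isomorphism. This is the identification under which I must compare the two representations.

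The core step is then purely diagrammatic. The adjoint representation of $H_L$ sends $h\in H_x^y$ to $d_x(I_h)\colon T_xH_x^x\to T_yH_y^y$, where $I_h(\gamma)=h\gamma h^{-1}$. Transporting this along the canonical isomorphisms $d_0E$, I must check that the resulting map $\g_x\to\g_y$ agrees with $\widehat{\Psi}_x^y(h)$. But Prop. \ref{prop:adjoint} says exactly that the square
\begin{align*}
\xymatrix{
\g_x \ar[d]_{E}  \ar[r]^{\widehat{\Psi}_x^y(h)} &  \g_y  \ar[d]^{E}  \\
H_x^x  \ar[r]_{I_h} &  H_y^y
}
\end{align*}
commutes. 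Differentiating this commutative square at $0\in\g_x$ (equivalently at $1_x\in H_x^x$) and using that $d_0E$ is an isomorphism, I obtain $d_x(I_h)\circ d_0E=d_0E\circ\widehat{\Psi}_x^y(h)$, which is precisely the statement that $\widehat{\Psi}_x^y(h)$ is conjugate to $d_x(I_h)$ via the canonical identifications. Since this holds for every $h$ and every pair $x,y\in L$, the two groupoid morphisms $H_L\to Iso(\g_L)$ coincide, i.e. they are equivalent.

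The main point requiring care is the legitimacy of differentiating the diagram \eqref{diagE}: one must know that $E$ is smooth and a local diffeomorphism at $0$, and that $I_h$ is a smooth map of the Lie groups $H_x^x\to H_y^y$. Both rely on the smoothness of $H_L$ furnished by Thm. \ref{ALintegr} under the discreteness hypothesis, and on $\varepsilon$ being a local diffeomorphism, which is where discreteness of the essential isotropy group is genuinely used. Once smoothness is in hand the argument is a formal consequence of Prop. \ref{prop:adjoint}, so I expect no further obstacle beyond checking that $d_0E$ really is the canonical identification between $\g_x$ and the Lie algebra $T_xH_x^x$ of the isotropy group.
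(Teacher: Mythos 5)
Your proposal is correct and follows essentially the same route as the paper: both arguments observe that discreteness of the essential isotropy group makes $d_0E\colon \g_x\to T_xH_x^x$ an isomorphism (since $\ker\varepsilon$ is discrete and $d_0exp=Id_{\g_x}$), and then differentiate the commuting square of Prop.~\ref{prop:adjoint} to identify $\widehat{\Psi}_x^y(h)$ with $d_x(I_h)$ for every $h$. The extra attention you pay to the smoothness of $E$ and $I_h$ via Thm.~\ref{ALintegr} is a reasonable elaboration of what the paper leaves implicit.
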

\begin{proof}[Proof of Cor. \ref{cor:adj}]
Recall that  $H_L$ is a Lie groupoid by Thm. \ref{ALintegr}. We use the notation of Prop. \ref{prop:adjoint}, with $x,y \in L$.
By assumption, the map $\varepsilon : G_x \to H_x^x$ has discrete kernel, and further $d_0exp=Id_{\g_x}$. Hence $d_0E\colon \g_x \to T_xH_x^x$ is an isomorphism. Taking derivatives in  diagram \eqref{diagE} we obtain a commuting diagram
$$
\xymatrix{
\g_x \ar[d]_{d_0E}  \ar[r]^{\widehat{\Psi}_x^y(h)} &  \g_y  \ar[d]^{d_0E}  \\ 
T_xH_x^x  \ar[r]_{d_x(I_h)} &  T_yH_y^y.
}
$$
As this holds for all $h\in H_L$, we are done.
\end{proof}
 
Until the end of this section, we turn to the proof of Prop. \ref{prop:adjoint}.
It suffices to prove the commutativity of  diagram \eqref{diagE} locally, namely in small neighborhoods $\tilde{\g}_x, \tilde{\g}_y$ of the origin.
Indeed, any $w\in \tilde{\g}_x$ can be written as $nv$ for some natural number $n$ and $v\in \tilde{\g}_x$. We have
$I_hE(nv)=(I_hE(v))^n$ since $\varepsilon$ and $I_h$ are group homomorphisms, and we have $[E\widehat{\Psi}_x^y(h)](nv)=([E\widehat{\Psi}_x^y(h)](v))^n$ since $\widehat{\Psi}_x^y(h)$ is linear and $\varepsilon$ is a group homomorphisms.

For the convenience of the reader, before the proof we recall some facts from \S \ref{sec:essiso} (see also \cite[\S 3]{AnZa11}) that will be used in the proof: let $G_x$ be the connected and simply connected Lie group integrating $\g_x$ and $\tilde{G}_x$ be a neighborhood of the identity where the exponential map $exp \colon \tilde{\g}_x \to \tilde{G}_x$ is a  diffeomorphism.   Let $X_1,\ldots,X_n \in \cF$ induce a basis of $\cF_x$, let $(U,\bt,\bs)$ be the corresponding  path holonomy bi-submersion at the point $x$, minimal at $(x,0)$,
and let $Y_1,\ldots,Y_n \in C^{\infty}(U,\ker d\bs)$  such that $d\bt(Y_i) = X_i$ for every $1\leq i \leq n$.

Recall that $\varepsilon$ is an extension of the map $\tilde{\varepsilon} \colon \tilde{G}_x \to H_x^x$ defined as $\tilde{\varepsilon} = \sharp \circ \Delta$, where  $\Delta \colon \tilde{G}_x \to U_x^x$ is a diffeomorphism    and $\sharp \colon U_x^x \to H_x^x$ is the quotient map. Hence the restriction of $E=\varepsilon \circ exp$
 to the neighborhood $\tilde{\g}_x$  is  given explicitly by 
$$E|_{\tilde{\g}_x} \colon \tilde{\g}_x \to H_x^x,\;\;
\sum_{i=1}^l k_i [X_i] \mapsto \sharp (exp_{(x,0)}(\sum_{i=1}^l k_i Y_i)),$$
where the $\{X_i\}$ are chosen so that the first $l$ of them vanish at $x$, for $l=dim(\g_x)$.

\begin{proof}[Proof of Prop. \ref{prop:adjoint}]
Let $h \in H_x^y$ be the class of some $w$ in a bi-submersion $(W,\bt_W,\bs_W)$, so that $\bs_W(w)=x$, $\bt_W(w)=y$. Let $S_x,S_y$ be slices, $\tau \colon S_x \to S_y$ be a diffeomorphism chosen as in Theorem \ref{globalaction} using the bi-submersion $W$. Identifying $\g_p$ with $\cF_{S_p}/I_p \cF_{S_p}$ as in \cite[Rem. 1.6]{AnZa11} ($p \in \{x,y\}$), the map $\widehat{\Psi}_x^y(h) \colon \g_x \to \g_y$ is given by $[Y] \mapsto [\tau_{*}(Y)]$ (see Prop. \ref{globalactionong}). Let $\phi$ be a local diffeomorphism extending $\tau$  arising from a bisection
%\footnote{Such a bisection is required to satisfy $d_x b(T_x L) = \{0\} \times T_x L$ where $L$ is the leaf at $x$ (see Appendix \ref{holproofs}, especially the proof of Proposition \ref{globalactionong} given there).}
$b_W$ of $(W,\bt_W,\bs_W)$ through $w$. 

%Choose a bisection $b$ of $W$ at $w$ and let $\phi \in Aut_{\cF}(M)$ be the local diffeomorphism it induces. It satisfies $\phi(x)=y$. 

Choose vector fields $X_1,\ldots,X_n \in \cF$ inducing a basis of $\cF_{x}$ so that the first $l$ of them are tangent to $S_x$, and let $(U,\bt,\bs)$ be the path-holonomy bi-submersion they define. Choose lifts $Y_1,\ldots,Y_n \in C^{\infty}(U(x);\ker d\bs)$ via $\bt$ of $X_1,\ldots,X_n$ respectively. Put $X'_i = \phi_* X_i \in \cF$, $i=1,\ldots,n$. Then $X'_1,\ldots,X'_n$ induce a basis of $\cF_y$. Let $(U',\bt',\bs')$ be the path-holonomy bi-submersion they define and choose lifts $Y'_1,\ldots,Y'_n \in C^{\infty}(U;\ker d\bs')$ via $\bt'$ likewise.

Now pick an element $v = \sum_{i=1}^l k_i [X_i]$ in $\widetilde{\g}_x$. We must show that $\left(E\circ \widehat{\Psi}_x^y(h)\right) (v) = \left(I_h \circ E\right)(v)$. The left-hand side of this expression, since $\widehat{\Psi}_x^y(h)[X_i]=[X_i']$, is the class of the element $$v_{1} := exp_{(y,0)}(\sum_{i=1}^l k_i Y'_i)\in U'.$$ The right-hand side $I_h(\sharp(exp_{(x,0)}(\sum_{i=1}^l k_i Y_i)))$ is the class of $$v_2 := (w,exp_{(x,0)}(\sum_{i=1}^l k_i Y_i),w)\;\in\;{W}\circ U \circ \overline{W},$$ where $\overline{W}$ is the inverse bi-submersion of $(W,\bt_W,\bs_W)$. We will show that the elements $v_1$ and $v_2$ carry the same local diffeomorphism, whence by \cite[Cor. 2.11]{AndrSk} they quotient to the same element in $H_y^y$.

To this end, let $\mathcal{O}_x$ be an open neighborhood of $x$ in $M$ and consider the bisection\\ $b := \{exp_{(m,0)}(\sum_{i=1}^l k_i Y_i) \colon m \in \mathcal{O}_x\}$ of $(U,\bt,\bs)$ at the point $exp_{(x,0)}(\sum_{i=1}^l k_i Y_i)$. Then the bisection $b_W  \times b \times b_W$ of $W \circ U \circ \overline{W}$ contains $v_2$ and corresponds to the local diffeomorphism of $ \mathcal{O}_y := \phi(\mathcal{O}_x)$ given by $$z \mapsto  \phi(exp_{\phi^{-1}(z)}\sum_{i=1}^l k_i X_i)=  
 exp_{z}\sum_{i=1}^l k_i\phi_* X_i.$$

On the other hand, the bisection $b' := \{exp_{(z,0)}(\sum_{i=1}^l k_i Y'_i) \colon z \in \mathcal{O}_y\}$ contains $v_1$ and carries the same local diffeomorphism, since $\phi_* X_i=X_i'$. Therefore the elements $v_1$ and $v_2$ induce the same element in $H_y^y$. 
\end{proof}

\subsection{Lie algebroid representations by Linear  Holonomy Transformations}\label{subsection:linholrep}

Here we differentiate the previous holonomy transformations to representations of the transitive Lie algebroid $A_L$ over a leaf $L$. %This is used in \S\ref{subsection:restrleaf}. 
For the sake of simplicity of exposition, we consider only the case of \emph{embedded} leaves, even though the results hold for arbitrary immersed leaves.

We consider  the $C^{\infty}(M)$-module $\cN = \vX(M)/\widehat{\cF}$ (see Appendix \ref{subsec:normalmodule} for more details about $\cN$).   Notice that the Lie bracket of vector fields descends to a map $\widehat{\cF}\times \cN \to \cN$.

%\icomment{The referee suggested there is a compact support issue with the two lines above (same in Appendix \ref{subsec:normalmodule}). I'm not sure what he means exactly... In definition \ref{def:fol} we state explicitly that $\cF$ consists of compactly supported vector fields, namely $\cF \subseteq \cX_c(M)$. Maybe he means that the module $\cN$ should be defined as $\cX_c(M)/\cF$ instead of $\cX(M)/\cF$. Actually, changing this might affect lemma \ref{regcasemor} below.}

%It is endowed with the flat $\cF$-connection: $$
%\nabla^{\cF} \colon \cF\times \cN \to \cN, \quad \nabla^{\cF}_{X}  Y   =   [X,Y] . $$
 
Now fix an embedded leaf $L$. We consider the Lie algebroid $A_L \to L$ introduced in \S \ref{sec:fol}, whose sections are given by $\widehat{\cF}/I_L\widehat{\cF}$, as well as  the normal bundle $NL= \frac{T_L M}{TL}\to L$, whose space of sections is $\cN/I_L\cN$ (see Lemma \ref{lem:NL}).
% The $\cF$-connection $\nabla^{\cF}$ 
The Lie bracket induces the \emph{Bott connection on $NL$} 
$$\nabla^{L,\perp} \colon C^{\infty}(L;A_L) \times C^{\infty}(L;NL) \to C^{\infty}(L;NL),\;\; \quad \nabla^{L,\perp}_{\langle X \rangle}\langle Y \rangle = \langle [X,Y] \rangle.
$$  
The map $\nabla^{L,\perp}$  is a Lie algebroid representation of $A_L$ on $NL$, that is, a flat Lie algebroid connection, and hence can equivalently be regarded as a Lie algebroid morphism 
\begin{center}
\fbox{\begin{Beqnarray*}
\nabla^{L,\perp} \colon A_L \to Der(NL)
 \end{Beqnarray*}}
\end{center}
%\begin{equation}\label{rep1}
%\nabla^{L,\perp} \colon A_L \to Der(NL)
%\end{equation}
Here $Der(NL)$ denotes the Lie algebroid over $L$ whose sections are given by $CDO(NL)$, the first order differential operators  $D : C^{\infty}(L;NL) \to C^{\infty}(L;NL)$ such that there exists a vector field $\sigma_D \in \vX(M)$ with $D(fX) = fD(X) + \sigma_D(X)(f)X$ (\cf \cite{KCHM}).

\begin{eps}
Restricting the Lie algebroid representation $\nabla^{L,\perp}$ to the isotropy Lie algebra $\g_x$ at some point $x\in L$ 
%  from $A_L$ to $\g_L$ we obtain the  Lie algebroid morphism 
%\begin{equation}\label{rep2}
%\g_L \to Der(NL).
%\end{equation}
we obtain a Lie algebra representation of $\g_x$ on the vector space $N_xL$. Explicitly, $[X ] \in \g_x\subset\widehat{\cF}/I_x\widehat{\cF}$ sends $\langle Y \rangle\in N_xL=\vX(M)/(\widehat{\cF}+I_x\vX(M)) $ to $\langle [X,Y] \rangle \in N_xL$.
We present 3 examples of this Lie algebra representation. In all 3 cases
$L=\{0\}$.
\begin{enumerate}[(i)]
\item As in Ex. \ref{exlin}, consider $M=\R$ and $\cF$ generated by $x\partial_{x}$. Then $\g_0$ is the one-dimensional Lie algebra with basis $[x\partial_{x}]$ and $N_0L=T_0M$.
%=\R$, an element of which is represented by $\partial_{x}\in \vX(M)$. 
Since $\cL_{x\partial_{x}}\partial_{x}=-\partial_{x}$ we conclude that the element $[x\partial_{x}]\in \g_0$ acts by $-Id_{N_0L}$.
\item\label{S1} As in Ex. \ref{exlin}, we consider 
% the $S^1$-action on $M=\R^2$ by rotation. It defines a
the foliation $\cF$  generated by the  vector field $x\partial_{y}-y\partial_{x}$. Then $\g_0$ is the one-dimensional  Lie algebra with basis $[x\partial_{y}-y\partial_{x}]$ and $N_0L=T_0M$ has basis $\{\partial_{x}|_0,\partial_{y}|_0\}$. The Lie algebra element $[x\partial_{y}-y\partial_{x}]\in \g_0$ acts on $N_0L$ by $\left(\begin{smallmatrix}0 & 1 \\-1 & 0\end{smallmatrix}\right)$.
\item Consider $M=\R^2$ and $\cF$ generated by the Euler vector field $x\partial_{x}+y\partial_{y}$. Then $\g_0$ is the one-dimensional Lie algebra with basis $[x\partial_{x}+y\partial_{y}]$ and $N_0L=T_0M$. The Lie algebra element  $[x\partial_{x}+y\partial_{y}]$ acts by $-Id|_{N_0L}$.
\end{enumerate}
\end{eps}

We show that, under regularity conditions on the leaf $L$, the  
 Lie groupoid representation   $\Psi_L$ of  $H_L$ on $NL$ (see Cor. \ref{cor:psiL}) differentiates to the Lie algebroid representation 
$\nabla^{L,\perp}$ of  $A_L$. Before that, we show that in the case of regular foliations this holds not just over individual leaves but over the whole of $M$.

Recall that if $B\to M$ is a vector bundle, then  $Iso(B)$\footnote{This is called \textit{frame groupoid}  in \cite[Ex. 1.1.12]{KCHM}.} 
is the transitive Lie groupoid consisting of linear isomorphisms between fibers of $B\to M$.
 
 \begin{lemma}\label{regcasemor} Let $\cF$ be a regular foliation on $M$, so $\widehat{\cF}=C^{\infty}(M;F)$ for an involutive distribution $F$. Assume that all leaves of $\cF$ be embedded. Let $N:=\cup_{{L }}NL\to M$ where $L$ ranges over all leaves, so that $\cN=C^{\infty}(M;N)$.

Then the Lie groupoid representation $\Psi \colon H \to Iso(N)$ (see Cor. \ref{cor:psiL}) differentiates to the Lie algebroid representation $F \to Der(N)$ which, at the level of sections, is given by the map $\widehat{\cF}\times \cN \to \cN$ induced by the  Lie bracket.  
 %, where it is denoted $\Phi(N)$. We use the notation $Iso(N)$ here to be consistent with the notation $Iso(N_xL, N_yL)$ we used earlier. Also because $\Phi$ here is used for the map defined in \ref{globalaction}.
\end{lemma}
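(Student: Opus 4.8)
The plan is to first record that, in the regular case, $H$ is a Lie groupoid integrating $F$, so that $\Psi$ is an honest Lie groupoid representation whose infinitesimal version is a flat $F$-connection on $N$; then to differentiate $\Psi$ along the flows of vector fields in $\cF$ and recognise the Lie bracket.

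\textbf{Step 1 (setting).} Since $\cF$ is regular, $\g_x=\{0\}$ for every $x\in M$, so the essential isotropy of each leaf is trivial. Hence by Thm. \ref{ALintegr} every $H_L$ is smooth and integrates $A_L=F|_L$; assembling over the (embedded) leaves, $H$ becomes a Lie groupoid whose Lie algebroid is $F$, with anchor the inclusion $F\hookrightarrow TM$. In particular $\Gamma(F)=\cF$, and differentiating the representation $\Psi\colon H\to Iso(N)$ of Cor. \ref{cor:psiL} produces a flat $F$-connection on $N$, i.e. a Lie algebroid morphism $F\to Der(N)$; it remains only to identify it with the Bott connection.

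\textbf{Step 2 (the differentiated formula).} I would use the standard description of the infinitesimal representation attached to $\Psi$: for $X\in\cF=\Gamma(F)$ let $\psi_t$ be the flow of $X$ (the anchor of $X$ is $X$ itself), and let $g_t(x)\in H$ be the element with source $x$ and target $\psi_t(x)$ carried by the bisection of the path-holonomy bi-submersion generated by $X$ (completed to a basis of $F$) that carries $\psi_t$. Then for $\nu\in\cN=C^{\infty}(M;N)$ the connection is
\begin{align*}
(\nabla_X\nu)(x)=\frac{d}{dt}\Big|_{t=0}\,\Psi(g_t(x))^{-1}\,\nu(\psi_t(x)).
\end{align*}
The key geometric input is that, by Lemma \ref{lem:regcase} and Prop. \ref{globallinaction} 2), the isomorphism $\Psi(g_t(x))\colon N_x\to N_{\psi_t(x)}$ is exactly the map induced on the quotient bundle $N=TM/F$ by $d_x\psi_t$; this makes sense because $\psi_t\in Aut_{\cF}(M)$ preserves $F$.

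\textbf{Step 3 (recognising the Lie bracket).} Picking $Y\in\vX(M)$ with $\langle Y\rangle=\nu$ and writing $\overline{(\cdot)}$ for the projection $TM\to N$, Steps 1--2 yield
\begin{align*}
(\nabla_X\nu)(x)
=\frac{d}{dt}\Big|_{t=0}\overline{d_{\psi_t(x)}\psi_{-t}\big(Y(\psi_t(x))\big)}
=\frac{d}{dt}\Big|_{t=0}\overline{\big((\psi_{-t})_*Y\big)(x)}
=\overline{(\cL_XY)(x)}=\langle[X,Y]\rangle(x),
\end{align*}
where in the last steps I use that the linear projection $TM\to N$ commutes with $\tfrac{d}{dt}$ and that $\cL_XY=[X,Y]$. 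As $x$ is arbitrary, $\nabla_X\nu=\langle[X,Y]\rangle$, which is precisely the map $\cF\times\cN\to\cN$ induced by the Lie bracket, i.e. the Bott connection $F\to Der(N)$. The main obstacle is Step 2: one must pin down the conventions (inverses and signs) in the differentiation of a groupoid representation and verify carefully that $\Psi$ evaluated on the flow element $g_t(x)$ coincides with the map induced by $d_x\psi_t$ on $N$. Once this identification is secured, Step 3 is the routine recognition of the Lie derivative.
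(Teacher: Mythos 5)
Your proposal is correct and follows essentially the same route as the paper: construct the one-parameter family of holonomy elements determined by the flow $\phi^{t}_{X}$ of $X\in\cF$, identify its image under $\Psi$ with the map induced by $(\phi^{t}_{X})_*$ on $N=TM/F$ (the paper does this via Cor. \ref{cHH} and the path-holonomy description of $\Phi$, you via Prop. \ref{globallinaction} 2), which is the same content), and then differentiate to recognise $\langle[X,\cdot]\rangle$. The sign/convention issue you flag in Step 2 is exactly what the paper handles by differentiating $\Psi(\gamma(-\epsilon))$ at $\epsilon=0$.
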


%\icomment{The referee is right with this one: Writing $\cF=C^{\infty}(M;F)$ is inconsistent with definition \ref{def:fol}. We should change it to $\cF=C_c^{\infty}(M;F)$. But then the following is not clear to me: Defining $\cN = \cX(M)/\cF$, do we get $\cN=C_c^{\infty}(M;N)$? Note that this is true if we define $\cN = \cX_c(M)/\cF$.}

\begin{proof} We will work directly with compactly supported sections of the Lie algebroids $F$ and $Der(N)$ and with $\bs$-sections of the corresponding Lie groupoids $H$ and $Iso(N)$.
 Given $X\in \cF$, we construct a path $\gamma(\epsilon)$ of sections of the source map of $H$ with  
 $\dot{\gamma}(0)=X$ (recall that $\cF$ is the space of sections of the Lie algebroid $F$ of $H$).
Specifically, for each $\epsilon$, let $\gamma(\epsilon) \colon M\to H$ be defined as follows: $(\gamma(\epsilon))(x)$ is 
the holonomy class of the curve
$[0,1]\to M,  t \mapsto \phi^{t\epsilon}_{{X}}(x)$, where $\phi^{t}_{X}$ is the time-$t$ flow of ${X}$. 
The map $\Phi$ of Thm. \ref{globalaction} is given by holonomy along paths (see Cor. \ref{cHH}),  so  $\Phi(\gamma(\epsilon))$ is the restriction of $\phi^{\epsilon}_{X}$ to suitably chosen slices.

Hence 
 $ \Psi(\gamma(\epsilon))$ is the vector bundle isomorphism of $N$ given by  $(\phi^{\epsilon}_{X})_*$ (more precisely, the    map it induces on $N=TM/F$).
%gives rise to an element of $Der(N)_x$ by  
%mapping $\beta\in C^{\infty}(M;N)$ to $(\Phi(\gamma(\epsilon)))^{-1}\beta_{\bt(\gamma(\epsilon)}$. Further $ \Phi(\gamma(\epsilon))=d_x \phi^{\epsilon}_{X}$, so 
Therefore $\Psi_*(X)=\frac{d}{d\epsilon}|_{\epsilon=0}(\Psi(\gamma(-\epsilon)))$
%=\frac{d}{d\epsilon}|_{\epsilon=0} \Psi(\gamma(\epsilon))$ 
is the first order differential operator which acts on
 $\langle Z \rangle \in  C^{\infty}(M;N)$ (the class of $Z\in\vX(M)$)
 by $$\langle Z \rangle \mapsto 
\frac{d}{d\epsilon}|_{\epsilon=0} (\phi^{-\epsilon}_{X})_* \langle Z \rangle=\langle {\cal L}_{X} Z \rangle= \langle [X, Z] \rangle.$$
\end{proof}

\begin{prop}\label{transvrep}
Let $(M,\cF)$ be a singular foliation and $L$ an embedded leaf. %whose \nbt{essential isotropy} is discrete, so that by Thm. \ref{ALintegr} the  Lie groupoid $H_L$ integrates the Lie algebroid $A_L$.
Then the  Lie groupoid representation $\Psi_L \colon H_L \to Iso(NL)$ of  Cor. \ref{cor:psiL} differentiates to the Lie algebroid representation
$\nabla^{L,\perp} \colon A_L \to Der(NL)$.
\end{prop}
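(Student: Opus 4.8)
The plan is to verify that the Lie groupoid representation $\Psi_L$ differentiates to the representation $\nabla^{L,\perp}$, reducing to an infinitesimal computation that parallels the regular case treated in Lemma \ref{regcasemor}. Since both $\Psi_L$ and $\nabla^{L,\perp}$ are (a priori) defined over the leaf $L$, and since $A_L$ is the Lie algebroid of $H_L$ (by Thm. \ref{ALintegr}, using that the essential isotropy is discrete), it suffices to show that for each section $X\in \cF/I_L\cF = C^{\infty}(L;A_L)$ the infinitesimal action $(\Psi_L)_*(X)$ on $NL$ agrees with $\nabla^{L,\perp}_{\langle X\rangle}$. Both sides are first-order differential operators on $C^{\infty}(L;NL)$, so it is enough to check equality after applying them to an arbitrary section $\langle Z\rangle \in C^{\infty}(L;NL)$, where $Z\in\vX(M)$ represents a class in $\cN/I_L\cN$.

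First I would fix $X\in\cF$ and build a path $\gamma(\epsilon)$ of $\bs$-sections of $H_L$ with $\dot\gamma(0)=X$, exactly as in the proof of Lemma \ref{regcasemor}: for each $\epsilon$ and each $x\in L$, let $(\gamma(\epsilon))(x)\in H_L$ be the holonomy groupoid class represented by the point $exp_{(x,0)}(\epsilon Y)$ in a path-holonomy bi-submersion $U$ defined by generators including $X$, where $Y$ is a $\bt$-lift of $X$ to $\ker d\bs$. The key geometric input is Prop. \ref{globallinaction} 2), which identifies $\Psi_x^y(h)$ with the derivative $d_x\tau$ of the holonomy transformation $\tau$ carried by the relevant bisection. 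Applying this to the bisection $m\mapsto exp_{(m,0)}(\epsilon Y)$, the carried diffeomorphism is precisely the time-$\epsilon$ flow $\phi^\epsilon_X$ of $X$ (restricted near the slice), so $\Psi_L(\gamma(\epsilon))$ is induced on $NL=T_LM/TL$ by the pushforward $(\phi^\epsilon_X)_*$.

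Next I would differentiate: $(\Psi_L)_*(X)$ acts on $\langle Z\rangle$ by
\begin{equation}
\langle Z\rangle \mapsto \frac{d}{d\epsilon}\Big|_{\epsilon=0}(\phi^{-\epsilon}_X)_*\langle Z\rangle = \langle \cL_X Z\rangle = \langle [X,Z]\rangle,
\end{equation}
which is exactly $\nabla^{L,\perp}_{\langle X\rangle}\langle Z\rangle$ by the definition of the Bott connection. Here I would invoke the smoothness statement from Cor. \ref{cor:psiL}: because $H_L$ is a Lie groupoid and the differentiable structure on $H_L$ is induced by the bi-submersions, the path $\gamma(\epsilon)$ is genuinely a smooth path of $\bs$-sections, so that differentiating $\Psi_L\circ\gamma$ computes the Lie-algebroid-level morphism $(\Psi_L)_*$ applied to $X$.

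The main obstacle I anticipate is not the final bracket computation, which is essentially formal, but rather justifying that the derivative of $\Psi_L$ at the identity can be computed leaf-wise through these explicitly constructed flow paths. Concretely, one must confirm that $\gamma$ really represents the class of $X$ in $A_L$ under the identification of $A_L$ with the Lie algebroid of $H_L$, i.e. that the bi-submersion construction $exp_{(x,0)}(\epsilon Y)$ recovers the correct infinitesimal generator; this is where the discreteness of the essential isotropy is essential, since it is what makes $H_L$ smooth with Lie algebroid $A_L$ (Thm. \ref{ALintegr}) and thus makes "differentiating $\Psi_L$'' meaningful. Once this identification is in place, the argument runs parallel to Lemma \ref{regcasemor}, with the only modifications being that $Z$ must be taken as a representative of a class in $\cN/I_L\cN$ and that all computations are restricted to the leaf $L$.
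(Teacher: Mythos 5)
Your proposal is correct and follows essentially the same route as the paper's proof: construct the path of bisections $\epsilon\mapsto exp_{(\cdot,0)}(\epsilon Y_1)$ in a path-holonomy bi-submersion, identify $\Psi_L(\gamma(\epsilon))$ with $(\phi^{\epsilon}_X)_*$ via Prop.~\ref{globallinaction}~2), and differentiate to obtain $\langle Z\rangle\mapsto\langle [X,Z]\rangle=\nabla^{L,\perp}_{\tilde X}\langle Z\rangle$. The paper's argument is the same adaptation of Lemma~\ref{regcasemor} that you describe, including the use of Thm.~\ref{ALintegr} to make the differentiation meaningful.
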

\begin{proof}
%If $H$ is longitudinally smooth then (\cf \ref{longsmooth}) $H|_L$ is a smooth groupoid and it integrates the Lie algebroid $A_L$. From Proposition \ref{globallinaction} we obtain a Lie groupoid morphism $H|_L \to Iso(NL)$. Now 
%The idea is to modify the proof of Lemma \ref{regcasemor} replacing $H$ with suitable bi-submersions, which we are allowed to do by the construction of Thm. \ref{ALintegr}. 
 Given $\tilde{X}\in \cF/I_L\cF=C^{\infty}_c(L;A_L)$ defined in a neighborhood of $x\in L$,   we construct a path $\gamma(\epsilon)$ of sections of the source map of $H_L$ with  
 $\dot{\gamma}(0)=\tilde{X}$. We may assume that $(\tilde{X} \text{ mod } I_x\cF) \in \cF_x$ is non-zero, for otherwise it acts trivially.
Choose a lift $X\in \cF$.  Starting from a basis of $\cF_x$, we can construct 
 generators $\{X_i\}_{i\le n}$ of $\cF$ in a neighborhood $M_0$, with $X_1=X$.
  Denote by $U$ the corresponding path-holonomy bi-submersion, and fix vertical lifts $\{Y_i\}_{i\le n}$ of the $X_i$ w.r.t. the target map $\bt$. 
  %There exist  $f_i\in C^{\infty}(M_0)$ such that $X=\sum_{i}f_iX_i$. 
 For every $\epsilon$ sufficiently close to zero, the map $$\hat{\gamma}(\epsilon) \colon M_0\to U, y \mapsto exp_{(y,0)}(\epsilon   Y_1) $$
is a bisection of $U$ carrying $\phi^{\epsilon}_{X}$, the time-$\epsilon$ flow of $X$.
The path $\epsilon\mapsto\gamma(\epsilon):=\sharp (\hat{\gamma}(\epsilon))|_{M_0\cap L}$ is a path of sections of the source map of $H_L$  with  
 $\dot{\gamma}(0)=\tilde{X}$.  

Using Prop. \ref{globallinaction} 2) we obtain that  $ \Psi_L(\gamma(\epsilon))$ is the vector bundle isomorphism of $NL$ given by  $(\phi^{\epsilon}_{X})_*$ (or rather the map it induces on $NL$). Therefore $(\Psi_L)_*(\tilde{X})=\frac{d}{d\epsilon}|_{\epsilon=0}(\Psi_L(\gamma(-\epsilon)))$
%=\frac{d}{d\epsilon}|_{\epsilon=0} \Psi(\gamma(\epsilon))$ 
is the first order differential operator which acts on  $\langle Z \rangle \in C^{\infty}(L;NL)=\vX(M)/(\cF+I_L\vX(M))$ (see Lemma \ref{lem:NL}) by $$\langle Z \rangle \mapsto 
\frac{d}{d\epsilon}|_{\epsilon=0} (\phi^{-\epsilon}_{X})_* \langle Z \rangle=\langle {\cal L}_{X} Z \rangle= \langle [X, Z] \rangle=\nabla^{L,\perp}_{\tilde{X}}\langle Z \rangle.$$
\end{proof}

Finally, recall that every transitive Lie algebroid acts by the bracket on its isotropy bundle (the kernel of the anchor map), see \cite[Ex. 3.3.15]{KCHM}. In the case of $A_L$ this representation is  $$\widehat{\nabla} \colon A_L \to Der(\g_L),\quad \widehat{\nabla}_X(V) = [X,V]_{A_L},$$
where $\g_L$ denotes the isotropy bundle of $A_L$ (a bundle of Lie algebras whose fiber over $x\in L$ is canonically isomorphic to $\g_x$).
 By $Der(\g_L)$ we denote the Lie algebroid over $L$ whose sections are covariant differential operators on the vector bundle $\g_L$ which are derivations of the bracket on the fibres of $\g_L$.
 % Namely, $\nabla$ respects the bracket of $\g_L$ in the sense 
% $\nabla_X([V,W]) = [\nabla_X(V),W] + [V,\nabla_X(W)]$ for all $V,W\in C^{\infty}(L;\g_L)$. 

%Assume that the \nbt{essential isotropy} group of the embedded leaf $L$ is discrete. 
The Lie groupoid representation 
$\widehat{\Psi}_L\colon H_L \to Iso(\g_L)$ (see Prop. \ref{globalactionong}) 
differentiates to the  representation $\widehat{\nabla} \colon A_L \to Der(\g_L)$. This follows immediately from  Cor. \ref{cor:adj} and \cite[Prop. 3.7.4]{KCHM}.

\section{The linearized foliation near a leaf}\label{section:linfol}

In this section, for any leaf $L$, we show that there is 
well-defined notion of ``linearization of $\cF$ at $L$.''
  In \S \ref{subsection:linfolnormal} we realize the linearized foliation as a foliation on $NL$, and show that it is induced by a Lie groupoid action, provided $L$ is embedded. %and has discrete \nbt{essential isotropy} group. 
  In \S \ref{subsection:linfolquotient} we give an alternative description of the linearized foliation as a foliation on  $({H_x\times N_xL})/{H_x^x}$, under the same assumptions. In these two subsections $L$ is taken to be embedded (and not immersed) just to simplify the exposition.
   Finally, in \S \ref 
{subs:remlin}, we make some comments on the linearization problem:
under what conditions is $\cF$ isomorphic to its linearization nearby a leaf? Notice that an answer to this question would constitute a version of the Reeb Stability Theorem for singular foliations. We also briefly discuss the relation to singular Riemannian foliations.

\subsection{The linearized foliation on the normal bundle}\label{subsection:linfolnormal}

Let $L$ be an embedded leaf of the singular foliation $(M,\cF)$.  
 There is a canonical identification $$I_L/I^2_L \cong \Gamma(N^*L)=C^{\infty}_{lin}(NL),\;\; [f] \mapsto df|_L,$$ where $I_L$  
  denotes the functions on $M$ that vanish on $L$,   $C^{\infty}_{lin}(NL)$ the fiberwise linear functions on the normal bundle, and 
  $[f]:=(f \text{ mod }I^2_L)$ for $f\in I_L$.   
   
Given a vector field $Y$ on   $M$   tangent to $L$, we 
%take the \emph{linearisation of $X$ at $x$} to be the vector field
denote by $Y_{lin}$ the vector field on $NL$ which acts as $Y|_L$ on the fiberwise constant functions, and as follows on $C^{\infty}_{lin}(NL)\cong I_L/I^2_L$: for all $f\in I_L$,  $$Y_{lin}[f]:=[Y(f)].$$
%(Notice that $Y_{lin}$ is well-defined since $Y$ preserves $I^2_L$.)

We obtain a bracket-preserving assignment $$lin \colon \cF \to \vX_{lin}(NL),\;\;Y \mapsto Y_{lin},$$
where $\vX_{lin}(NL)$ denotes the vector fields on $NL$ which preserve the fiberwise constant functions and $C^{\infty}_{lin}(NL)$. 
Notice that $lin$ factors to a map $\cF/I_L\cF \to \vX_{lin}(NL)$.
 \begin{definition}\label{def:linF}
The \emph{linearization of $\cF$} is  the foliation $\cF_{lin}$ on $NL$
generated (as a $C^{\infty}(NL)$-module) by $\{Y_{lin}:Y \in \cF\}$. 
\end{definition}

\begin{remark}\label{gxlin} The linearization procedure, obviously, is not injective. This is already apparent if one considers a fiber $N_xL$ of the normal bundle. More precisely,
by Lemma \ref{lem:com} it is clear that restricting the  map (obtained factoring) $lin$  we obtain a
canonical surjective Lie algebra morphism $\g_x \to \cF_{lin}|_{N_xL}:=\{Z|_{N_xL}: Z \in \cF_{lin} \text{ is tangent to }N_xL\}$. Explicitly, it is  given by $$\g_x\to \cF_{lin}|_{N_xL},\;\langle Y \rangle \mapsto (Y_{lin})|_{N_xL}.$$
 This map is usually not injective (take for instance $(M,\cF) = (\R,x^2\partial_{x})$ and $x=0$). 
  %Hence  $\g_x$ contains more information than $\cF^{\perp}_x$ about the   transverse foliation.
  \end{remark}

Recall that we defined the Lie algebroid morphism
$\nabla^{L,\perp} \colon A_L \to Der(NL)$ in \S\ref{subsection:linholrep}. At the level of section it induces $\nabla^{L,\perp} \colon \cF/I_L\cF \to CDO(NL)$, which is essentially given by the Lie bracket of vector fields.

\begin{lemma}\label{lem:com}
The following diagram commutes:
$$ \xymatrix{ 
\cF \ar[d]\ar^{lin}[r]& \vX_{lin}(NL)\ar^{\cong}[d]\\
\cF/I_L\cF  \ar[r]^{\nabla^{L,\perp}}& CDO(NL).
}$$
Here the right arrow is the Lie algebra isomorphism given by \begin{equation}\label{eq:cdolin}
\vX_{lin}(NL)\cong CDO(NL), Y \mapsto (a\mapsto [Y,a]_{\vX(NL)}),
\end{equation}
where $a\in \Gamma(NL)$ is also interpreted as a vertical (constant) vector field on $NL$.
\end{lemma}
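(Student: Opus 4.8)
The plan is to push both composites in the square down to $CDO(NL)$ and compare them there. Since a covariant differential operator is nothing but its action on $\Gamma(NL)$, the whole diagram commutes as soon as, for each fixed $Y\in\cF$, the two CDOs attached to $Y$ agree, i.e.
\begin{equation*}
[Y_{lin},a]_{\chi(NL)}=\nabla^{L,\perp}_{\langle Y\rangle}a\qquad\text{for all }a\in\Gamma(NL),
\end{equation*}
where on the left $a=\langle Z\rangle$ is read as the constant vertical vector field on $NL$ determined by $Z\in\vX(M)$ (this is the map \eqref{eq:cdolin} applied to $Y_{lin}$), and on the right $\nabla^{L,\perp}_{\langle Y\rangle}\langle Z\rangle=\langle[Y,Z]\rangle$. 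Both sides of this identity are again sections of $NL$, so they coincide as soon as they pair equally with every element of $\Gamma(N^{*}L)$.

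To carry this out I would fix the following conventions, all either recalled from the excerpt or standard for a vector bundle. Under the identification $\Gamma(N^{*}L)\cong I_L/I_L^{2}$ one has $df|_L\leftrightarrow[f]$; the induced pairing of $df|_L$ with $\langle Z\rangle\in\Gamma(NL)$ is $(df|_L)(\langle Z\rangle)=(Zf)|_L$, which is well defined because $df|_L$ annihilates $TL$ and because $(Zf)|_L=0$ whenever $Z\in\cF$ or $Z\in I_L\vX(M)$. A section $b\in\Gamma(NL)$, viewed as a constant vertical field, acts on the fiberwise linear function $\ell_\xi$ attached to $\xi\in\Gamma(N^{*}L)$ by $b(\ell_\xi)=\xi(b)$, a fiberwise constant function. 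Finally, by its very definition $Y_{lin}$ acts on fiberwise constant functions through $Y|_L$ and on linear functions through $[f]\mapsto[Yf]$. Granting these, the computation is short: fixing $\xi=df|_L$ with $f\in I_L$ and $a=\langle Z\rangle$, and noting $Yf\in I_L$ since $Y$ is tangent to $L$, one gets
\begin{align*}
[Y_{lin},a](\ell_\xi)
&= Y_{lin}\big(a(\ell_\xi)\big)-a\big(Y_{lin}(\ell_\xi)\big)\\
&= (Y|_L)\big((Zf)|_L\big)-a\big(\ell_{d(Yf)|_L}\big)\\
&= (Y(Zf))|_L-(Z(Yf))|_L = ([Y,Z]f)|_L\\
&= (df|_L)\big(\langle[Y,Z]\rangle\big) = \xi\big(\nabla^{L,\perp}_{\langle Y\rangle}a\big).
\end{align*}
As $\xi\in\Gamma(N^{*}L)$ is arbitrary, this gives $[Y_{lin},a]=\nabla^{L,\perp}_{\langle Y\rangle}a$ and hence the commutativity of the square.

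The argument has no genuinely hard analytic core; the only delicate part is the bookkeeping. One must check that every pairing used descends to the relevant quotients $I_L/I_L^{2}$ and $\cN/I_L\cN$ (this is exactly where tangency of $Y$ to $L$ and $f\in I_L$ enter, guaranteeing $(Zf)|_L$ is insensitive to adding elements of $\cF+I_L\vX(M)$ to $Z$, and that $Yf$ stays in $I_L$), and that the action of $Y_{lin}$ on linear functions is transported correctly into the dual pairing with $\Gamma(NL)$. Once the identification $\Gamma(N^{*}L)\cong I_L/I_L^{2}$ of the excerpt is in place, the substance of the proof is precisely the Leibniz identity $[Y,Z]f=Y(Zf)-Z(Yf)$ in the third line above.
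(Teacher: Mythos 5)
Your proof is correct and follows essentially the same route as the paper's: both reduce the commutativity of the square to the identity $[Y_{lin},\langle Z\rangle_{ver}]=\langle [Y,Z]\rangle_{ver}$, verify it by evaluating both (vertical constant) vector fields on fiberwise linear functions $[f]$ with $f\in I_L$, and conclude via the Leibniz identity $([Y,Z]f)|_L=(Y(Zf))|_L-(Z(Yf))|_L$. Your phrasing through the dual pairing with $\Gamma(N^{*}L)$ is just the paper's ``action on fiberwise linear functions'' in disguise, and your extra bookkeeping about descent to the quotients is a harmless (indeed welcome) elaboration.
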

\begin{proof}
Let $Y\in \cF$. We have to check that $[Y_{lin},\cdot]_{\vX(NL)}=
[Y,\cdot]|_L \text{ mod } TL$ when acting on elements of $\Gamma(NL)$, i.e. that
 \begin{equation}\label{eq:ver}
[Y_{lin},Z_{ver}]_{\vX(NL)}=
([Y,Z])_{ver}
\end{equation}
for all $Z\in \vX(M)$, where $Z_{ver}:=
(Z|_L \text{ mod } TL)\in \Gamma(NL)$.

To this aim, take $f\in I_L$.
For the r.h.s. we have
$$([Y,Z])_{ver}[f]=[Y,Z](f)|_L=\big(Y(Z(f))-Z(Y(f))\big)|_L.$$
For the l.h.s.
\begin{align*}
[Y_{lin},Z_{ver}]_{\vX(NL)}[f]&=Y_{lin}(Z_{ver}[f])-Z_{ver}(Y_{lin}[f])\\
&=Y_{lin}(Z(f)|_L)-Z_{ver}[Y(f)]\\
&=\left(Y(Z(f))\right)|_L-\left(Z(Y(f))\right)|_L
\end{align*}
where we used $Z_{ver}[f]=(Z(f))|_L\in C^{\infty}(L)$, $Y_{lin}[f]=[Y(f)]\in C_{lin}(NL)$ and $Y_{lin}(g|_L)=Y(g)|_L\in C^{\infty}(L)$ (for $g:=Z(f)$).

Both sides of eq. \eqref{eq:ver} are vertical (constant) vector fields on $NL$, and as we just showed that their action on fiberwise linear functions agree, we conclude that they are equal.
\end{proof}

From Lemma \ref{lem:com} we obtain immediately:
\begin{cor}\label{cor:agree}
The subset $\{Y_{lin}:Y \in \cF\}$ of $\vX_{lin}(NL)$ agrees with the image of $\nabla^{L,\perp} \colon \cF/I_L\cF \to CDO(NL)$, under the  identification \eqref{eq:cdolin}.
\end{cor}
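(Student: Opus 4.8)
The plan is to read the statement directly off the commutative square established in Lemma \ref{lem:com}, so that no genuinely new argument is required. By Definition \ref{def:linF}, the subset $\{Y_{lin}:Y\in\cF\}$ is by its very description the image of the top horizontal arrow $lin\colon\cF\to\chi_{lin}(NL)$ of that diagram. The right vertical arrow, namely the Lie algebra isomorphism \eqref{eq:cdolin} $\chi_{lin}(NL)\cong CDO(NL)$, therefore transports this subset bijectively onto the image of the composite $\nabla^{L,\perp}\circ\pi$, where I write $\pi\colon\cF\to\cF/I_L\cF$ for the left vertical arrow (the canonical quotient projection). This is exactly what commutativity of the square gives us.

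First I would observe that $\pi$ is surjective, since every element of $\cF/I_L\cF$ is by definition the class of some $X\in\cF$. Consequently the image of $\nabla^{L,\perp}\circ\pi$ coincides with the image of $\nabla^{L,\perp}\colon\cF/I_L\cF\to CDO(NL)$ taken on its own. Combining this with the previous paragraph yields precisely the asserted equality of subsets of $CDO(NL)$ under the identification \eqref{eq:cdolin}, which is the content of the corollary.

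I do not expect any real obstacle: essentially all the work sits in Lemma \ref{lem:com}, and the only further input is the trivial surjectivity of $\pi$. The single point worth a word of care is that \eqref{eq:cdolin} is an isomorphism of the ambient spaces, hence carries images of maps landing in $\chi_{lin}(NL)$ onto images of the corresponding maps landing in $CDO(NL)$; this is what legitimizes moving the subset $\{Y_{lin}:Y\in\cF\}$ across the vertical arrow without loss of information, and it is why the two subsets \emph{agree} rather than merely correspond.
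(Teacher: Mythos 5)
Your proposal is correct and is exactly the paper's argument: the paper derives the corollary immediately from the commutative square of Lemma \ref{lem:com}, and your two observations (surjectivity of the quotient map $\cF\to\cF/I_L\cF$ and the fact that the isomorphism \eqref{eq:cdolin} carries images onto images) are precisely the implicit steps being invoked there.
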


Given an action of a  Lie groupoid $G\rightrightarrows M$ on a map $\pi\colon N\to M$, the \emph{induced foliation} on $N$ is the one generated (as a $C^{\infty}(N)$-module) by the image of the corresponding infinitesimal action
$\Gamma(A)\to \vX(N)$, where $A$ is the Lie algebroid of $G$. 

\begin{cor}\label{cor:ind} Let $(M,\cF)$ be a singular foliation and $L$ an embedded leaf. %whose \nbt{essential isotropy} is discrete. 
Then the linearized foliation $\cF_{lin}$ is the foliation induced by the Lie groupoid action $\Psi_L$  of $H_L$ on $NL$ (see Cor. \ref{cor:psiL}).
\end{cor}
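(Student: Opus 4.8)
The plan is to unwind the definition of the \emph{induced} foliation and to recognize its generators as the linearizations $Y_{lin}$, using the results already established. Since the essential isotropy of $L$ is discrete, Theorem \ref{ALintegr} tells us that $H_L$ is a Lie groupoid integrating $A_L$, so the action $\Psi_L$ of Cor. \ref{cor:psiL} is an honest Lie groupoid action on $NL \to L$, and its Lie algebroid is $A_L$. By the definition recalled just before the statement, the induced foliation on $NL$ is generated, as a $C^{\infty}(NL)$-module, by the image of the infinitesimal action $\Gamma(A_L) \to \chi(NL)$, where $\Gamma(A_L) = \cF/I_L\cF$.

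The key point is to compute this infinitesimal action. Because $\Psi_L$ takes values in the fibrewise-linear isomorphisms $Iso(NL)$, the flows it generates on the total space $NL$ are fibrewise-linear, so the infinitesimal action lands in $\chi_{lin}(NL)$; moreover, under the identification \eqref{eq:cdolin} between $\chi_{lin}(NL)$ and $CDO(NL)$, a section $X \in \Gamma(A_L)$ is sent to the covariant differential operator obtained by differentiating $\Psi_L$ along $X$. By Prop. \ref{transvrep} this differentiation is exactly the Bott representation $\nabla^{L,\perp}$. Hence the image of the infinitesimal action, viewed inside $\chi_{lin}(NL)$ via \eqref{eq:cdolin}, coincides with the image of $\nabla^{L,\perp} \colon \cF/I_L\cF \to CDO(NL)$.

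It then remains only to invoke Cor. \ref{cor:agree}, which identifies that image with the set $\{Y_{lin} : Y \in \cF\}$; by Definition \ref{def:linF} this set generates $\cF_{lin}$ as a $C^{\infty}(NL)$-module, and since the induced foliation is generated by the same set, the two foliations coincide. The only genuine obstacle is the middle step, namely checking that the infinitesimal action of the \emph{linear} groupoid action $\Psi_L$ on the total space is given by the linear vector fields attached to $\nabla^{L,\perp}$ through \eqref{eq:cdolin}: this is the standard compatibility between differentiating a linear representation of a Lie groupoid and the identification of linear vector fields with covariant differential operators, and once it is in place the corollary follows by directly comparing generators.
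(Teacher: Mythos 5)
Your proposal is correct and follows essentially the same route as the paper: differentiate the action $\Psi_L$ using Prop. \ref{transvrep} to obtain $\nabla^{L,\perp}$ under the identification \eqref{eq:cdolin}, then conclude by Cor. \ref{cor:agree}. The extra care you take with the compatibility between the infinitesimal action of a linear groupoid action and the identification of linear vector fields with covariant differential operators is exactly the content the paper leaves implicit.
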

\begin{proof}  
By Prop. \ref{transvrep} the above Lie groupoid action differentiates to the Lie algebroid action of $A_L$ on $NL$ which, under the identification \eqref{eq:cdolin}, is given by $\nabla^{L,\perp}$. Now use Cor. \ref{cor:agree}. 
\end{proof}

\subsection{The linearized foliation on  \texorpdfstring{$({H_x\times N_xL})/{H_x^x}$}{Lg}}\label{subsection:linfolquotient}

We give an alternative description of the linearization of $\cF$ at $L$ defined in Def. \ref{def:linF}, in the case that $L$ is
an embedded leaf. %whose \nbt{essential isotropy} group is discrete.  
Fix a point $x$ in $L$. Since $H_L$ is a Lie groupoid (Thm. \ref{ALintegr}), $H_x\to L$ is a principal $H_x^x$-bundle. There is a linear action of $H_x^x$ on $N_xL$ given by $\Psi$ (see Prop. \ref{globallinaction}). We consider the associated vector bundle, that is,
$$Q:=\frac{H_x\times N_xL}{H_x^x},$$
where the action of $H_x^x$ on $H_x\times N_xL$ is given by
$g\cdot(h,\xi)=(hg^{-1},\Psi(g)\xi)$.

\begin{lemma}
There is a canonical diffeomorphism
\begin{equation*}
\Upsilon\colon Q \to NL,\;\; [(h,\xi)]\mapsto \Psi(h)\xi.
\end{equation*}
\end{lemma}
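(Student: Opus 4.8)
The plan is to exhibit $\Upsilon$ as a well-defined bijection that is compatible with the smooth structures, and then upgrade it to a diffeomorphism using the fact that $H_x \to L$ is a principal bundle.

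First I would check well-definedness: the formula $[(h,\xi)]\mapsto \Psi(h)\xi$ must be independent of the representative. If $(h,\xi)$ and $(hg^{-1},\Psi(g)\xi)$ represent the same class (with $g\in H_x^x$), then applying the formula to the second gives $\Psi(hg^{-1})\Psi(g)\xi$. Since $\Psi=\Psi_L$ is a groupoid morphism by Cor.~\ref{cor:psiL}, we have $\Psi(hg^{-1})\Psi(g)=\Psi(hg^{-1}g)=\Psi(h)$, so both representatives yield $\Psi(h)\xi$. This uses that $\Psi(h)\colon N_xL\to N_{\bt(h)}L$ is a linear isomorphism (Prop.~\ref{globallinaction} 1)), so the target lands in the correct fiber $N_{\bt(h)}L$, and hence $\Upsilon$ covers the identity on $L$ via $[(h,\xi)]\mapsto \bt(h)$.

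Next I would verify that $\Upsilon$ is a fiberwise linear bijection. Over a point $y\in L$, the fiber of $Q$ consists of classes $[(h,\xi)]$ with $\bt(h)=y$; fixing any single $h_0\in H_x^y$, every such class has a unique representative of the form $(h_0,\eta)$ (using freeness and transitivity of the $H_x^x$-action on the fiber $H_x^y$ of the principal bundle $H_x\to L$), and $\Upsilon$ sends it to $\Psi(h_0)\eta\in N_yL$. Since $\Psi(h_0)$ is a linear isomorphism $N_xL\to N_yL$, this is a linear bijection on each fiber, so $\Upsilon$ is a bijective bundle map covering $\id_L$.

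Finally I would address smoothness, which I expect to be the main obstacle. The quotient map $H_x\times N_xL\to Q$ is a surjective submersion (as $Q$ is an associated vector bundle to the principal $H_x^x$-bundle $H_x\to L$, this is standard once one knows $H_L$ is a Lie groupoid, Thm.~\ref{ALintegr}), so to see $\Upsilon$ is smooth it suffices to check that the composite $H_x\times N_xL\to NL$, $(h,\xi)\mapsto \Psi(h)\xi$, is smooth; this follows from the smoothness of the action $\Psi_L$ established in Cor.~\ref{cor:psiL}. For the inverse, one can work locally: choosing a local section $\sigma\colon V\to H_x$ of $\bt$ over an open set $V\subseteq L$ (available since $H_x\to L$ is a principal bundle, hence locally trivial), the inverse is $NL|_V\ni v\mapsto [(\sigma(y),\Psi(\sigma(y))^{-1}v)]$ for $v\in N_yL$, which is smooth because $\sigma$ and $\Psi$ are smooth and inversion of linear isomorphisms is smooth. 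As both $\Upsilon$ and $\Upsilon^{-1}$ are smooth bundle maps covering $\id_L$, $\Upsilon$ is the asserted diffeomorphism.
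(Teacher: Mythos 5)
Your proof is correct and follows essentially the same route as the paper: well-definedness via the groupoid morphism property of $\Psi$ (Cor.~\ref{cor:psiL}) and bijectivity using that each $\Psi(h)$ is a linear isomorphism onto the correct fiber. The only difference is that you additionally spell out smoothness of $\Upsilon$ and its inverse via the associated-bundle structure and local sections, a point the paper's proof leaves implicit after checking bijectivity.
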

\begin{proof}
$\Upsilon$ is well-defined, since for all $g\in H_x^x$ we have
$\Psi(hg^{-1})(\Psi(g)\xi)       =\Psi(h)\xi$ as a consequence of the fact that $\Psi$ is a groupoid morphism, see Cor. \ref{cor:psiL}.
$\Upsilon$ is surjective: given $\eta \in N_yL$, pick any $h\in H_x^y$ and define $\xi:=(\Psi(h))^{-1}\eta$. Then $\Upsilon[(h,\xi)]=\eta$.
Using the fact that $\Psi$ is a groupoid morphism, it is straightforward to check that $\Upsilon$ is injective.
\end{proof}

 There is a map 
$$ \Gamma_c(A_L)=\cF/I_L\cF \to \vX(Q)$$
defined as follows.
 Given $Y\in \Gamma(A_L)=\Gamma(ker (\bs_*|_M))$, consider the vector field $(R_*Y,0)$ on 
 $H_x\times N_xL$, defined to be  $((R_h)_*Y,0_\xi)$ at the point $(h,\xi)$. This vector field is $H_x^x$-invariant, and hence projects to a vector field   on $Q$, which we denote by $[(R_*Y,0)]$. 
 
\begin{prop}\label{prop:equiv2m} Let  $L$ be
an embedded leaf of $(M,\cF)$.% whose \nbt{essential isotropy} group   is discrete.
Under the canonical diffeomorphism $\Upsilon$:

1) the action $\Psi_L$ of $H_L$
on $NL$ corresponds to the action of $H_L$ on $Q$ induced by left groupoid multiplication on $H_x$

2) $\{Y_{lin}:Y \in \cF\}$ on $NL$ corresponds to  $\{[(R_*Y,0)]:Y\in \Gamma_c(A_L)\}$ on $Q$. 
% .The following diagram commutes:
%$$ \label{comm1} \xymatrix{ 
%\cF_Q \ar[r]^{\Upsilon}& \cF^{lin}(NL)\\
%\Gamma(A_L)\ar[u]^{\Psi^Q_*}\ar[ur]^{\Psi_*}
%&}.
%$$  
\end{prop}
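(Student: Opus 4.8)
The plan is to prove both statements by transporting the relevant structures through the diffeomorphism $\Upsilon$, exploiting that $\Psi$ is a groupoid morphism (Cor.~\ref{cor:psiL}). Part 1) is essentially formal. First I would check that left multiplication descends to a well-defined action of $H_L$ on $Q$: for $g\in H_L$ with $\bs(g)=\bt(h)$ one sets $g\cdot[(h,\xi)]=[(gh,\xi)]$, and replacing $(h,\xi)$ by $(hg_0^{-1},\Psi(g_0)\xi)$ with $g_0\in H_x^x$ produces $((gh)g_0^{-1},\Psi(g_0)\xi)$, the same class in $Q$. Then equivariance of $\Upsilon$ is a one-line computation: $\Upsilon(g\cdot[(h,\xi)])=\Psi(gh)\xi=\Psi(g)\Psi(h)\xi=\Psi_L(g)\cdot\Upsilon([(h,\xi)])$, using multiplicativity of $\Psi$.

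\textbf{Part 2), setup via flows.} This is the substantive point, and I would argue at the level of flows. Fix $Y\in\Gamma(A_L)$ and a lift $X\in\cF$; note $Y_{lin}$ depends only on $Y$ since $lin$ factors through $\cF/I_L\cF$. The vector field $(R_*Y,0)$ on $H_x\times N_xL$ has vanishing $N_xL$-component, so its flow fixes $\xi$ and acts on the $H_x$-factor by the flow $\Phi^\epsilon$ of the right-invariant vector field $R_*Y$. The key identification is that $\Phi^\epsilon$ is left translation by the family of $\bs$-sections $\gamma(\epsilon)$ of $H_L$ with $\dot\gamma(0)=Y$ constructed in the proof of Prop.~\ref{transvrep}; concretely $\Phi^\epsilon(h)=\gamma(\epsilon)(\bt h)\cdot h$, which one verifies has velocity $(R_h)_*Y_{\bt h}=(R_*Y)_h$ at $\epsilon=0$.

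\textbf{Part 2), transport and conclusion.} Transporting through $\Upsilon$ and using multiplicativity of $\Psi$, the curve through $p=\Upsilon[(h,\xi)]=\Psi(h)\xi$ becomes $\epsilon\mapsto\Psi(\gamma(\epsilon)(\bt h))\,\Psi(h)\xi=\Psi_L(\gamma(\epsilon))\,p$. By the computation in the proof of Prop.~\ref{transvrep}, $\Psi_L(\gamma(\epsilon))$ acts on $NL$ as the map induced by $(\phi^\epsilon_X)_*$, so the transported curve is $\epsilon\mapsto(\phi^\epsilon_X)_*p$. Differentiating at $\epsilon=0$ gives $\Upsilon_*[(R_*Y,0)]_p=\frac{d}{d\epsilon}\big|_{0}(\phi^\epsilon_X)_*p$, which is exactly the value at $p$ of $X_{lin}$, the generator of the linearized flow $(\phi^\epsilon_X)_*$ on $NL$. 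Equivalently, one identifies the corresponding covariant differential operator with $\nabla^{L,\perp}_{Y}$ via Prop.~\ref{transvrep} and Cor.~\ref{cor:agree}, then reads off the vector field through the isomorphism \eqref{eq:cdolin}. As $Y$ ranges over $\Gamma(A_L)=\cF/I_L\cF$ (with $X$ over its lifts), the two families of vector fields match.

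\textbf{Main obstacle.} The heart of the argument is the identification $\Phi^\epsilon(h)=\gamma(\epsilon)(\bt h)\cdot h$ together with the passage to $(\phi^\epsilon_X)_*$ on $NL$. Only first-order behaviour at $\epsilon=0$ is needed, so I would avoid establishing that $\gamma$ is a one-parameter subgroup of bisections; instead I would note that $\Phi^\epsilon(h)$ and $\gamma(\epsilon)(\bt h)\,h$ are curves in $H_x$ agreeing to first order, and that applying the smooth map $\Psi_L$ (smooth because $H_L$ is a Lie groupoid by Thm.~\ref{ALintegr} — this is where the discreteness of the essential isotropy enters) and evaluating at $\xi$ preserves this first-order agreement. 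The remaining care is the bookkeeping of source/target constraints in the products $gh$ and $\gamma(\epsilon)(\bt h)\,h$, and checking that $(R_*Y,0)$ is genuinely $H_x^x$-invariant and hence descends to a vector field on $Q$.
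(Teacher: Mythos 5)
Your proposal is correct and follows essentially the same route as the paper's (very terse) proof: part 1) from the definition of $\Upsilon$ and multiplicativity of $\Psi$, and part 2) from part 1) combined with the differentiation result of Prop.~\ref{transvrep}/Cor.~\ref{cor:ind} and the fact that right-invariant vector fields are the infinitesimal generators of left groupoid multiplication. You have merely written out explicitly, at the level of flows, the details the paper leaves implicit.
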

\begin{proof}
1) follows from the definition of $\Upsilon$.

2) follows from 1), from Cor. \ref{cor:ind}, and from the fact that the infinitesimal generators of left groupoid multiplication are the right-invariant vector fields.
\end{proof}
 
\begin{remark}
Consider the case when the foliation $\cF$ is regular. Then the leaves of the (regular) foliation $\cF_{lin}$ on $NL$ are given by pasting together the flat sections of the flat connection $\nabla^{L,\perp}$ (see Prop. \ref{cor:ind}).
Further, the leaves of the above (regular) foliation on $Q$ are obtained projecting the foliation on ${H_x\times N_xL}$ by horizontal copies of $H_x$.
\end{remark} 
 
\subsection{Remarks on the linearization problem}\label{subs:remlin}

Consider a singular foliation $(M,\cF)$ and an embedded leaf $L$.

\begin{definition}\label{def:lina}
We say that \emph{$\cF$ is linearizable about $L$} if
 there exist  tubular neighborhoods $U\subset M$ of $L$ and $V\subset NL$ of the zero section, and a  diffeomorphism $U\to V$ mapping $L$ to the zero section
inducing an isomorphism  (of $C^{\infty}(U)$-modules) between
 the foliation $\cF|_U$ and $\cF_{lin}|_V$, where $\cF_{lin}$ is the linearized foliation on $NL$.
\end{definition}
Notice that  it does not make sense to extend Def. \ref{def:lina} to immersed leaves $L$, as the zero section is always an embedded submanifold of $NL$. 
\begin{remark}
Let $\cF$ be generated by just one vector field $X$ vanishing at a point $x$. Then $\cF$ is  linearizable about $\{x\}$ iff there exists a smooth, nowhere vanishing function $f$ defined on $V\subset T_xM$ and  a diffeomorphism $U\to V$ that maps $X$ to $f\cdot X_{lin}$. In particular, the linearizability of the foliation $\cF$ about $\{x\}$ is  a weaker condition than the linearizability of the vector field $X$ about $x$.
\end{remark}
 
\begin{remark}
It is known that not every singular foliation comes from a Lie algebroid \cite[Prop. 1.3]{AnZa11}.
(The local version of the question is still open.) 
However if $\cF$ is linearizable about an embedded leaf $L$, then $\cF|_U$ comes from a Lie algebroid, where $U$ is some tubular neighbourhood of $L$. This follows from Cor. \ref{cor:ind}. A refinement of this statement will be given in Prop. \ref{prop:proper} (see also Remark \ref{rem:linearization} \ref{rem:a}).
\end{remark}

Recall that a 
%\nbt{Hausdorff} 
Lie groupoid  $G\rightrightarrows U$ is \emph{proper}
if its target-source map $(\bt,\bs) \colon G\to U\times U$ is proper (i.e., preimages of compact sets are compact). Properness implies that $G$ is \emph{proper at $y$}, for all $y\in U$ (see \cite[\S 1]{CrStr}).

\begin{prop}\label{prop:proper}
Let $(M,\cF)$ be a singular foliation, and $x$ a point of an embedded leaf $L$ so that the Lie groupoid $H_L$ is Hausdorff.   The following are equivalent: 
\begin{itemize}
\item[a)] $\cF$ is linearizable about $L$ and $H_x^x$ is compact
\item[b)] there exists a tubular  neighborhood $U$ of $L$  and 
a Hausdorff Lie groupoid $G\rightrightarrows U$, proper at $x$, inducing the foliation $\cF|_U$.\end{itemize}
\end{prop}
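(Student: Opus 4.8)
The plan is to establish the two implications separately, exploiting throughout the key observation recalled in the introduction (\cite[Ex.~3.4(4)]{AndrSk}): when a foliation is induced by a Lie groupoid $G$, the holonomy groupoid $H$ is a quotient of $G$, so that $H_x^x$ is a quotient of $G_x^x$ and properness of $G$ at $x$ forces $H_x^x$ to be compact.

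For the direction $b)\Rightarrow a)$, suppose $\cF|_U$ is induced by a Hausdorff Lie groupoid $G\rightrightarrows U$ proper at $x$. The compactness of $H_x^x$ is immediate from the quotient observation, since the continuous image of the compact isotropy $G_x^x$ is compact (using Hausdorffness of $H_L$ to ensure $H_x^x$ is actually compact and not merely a compact-image quotient with bad topology). For linearizability, I would invoke the linearization theorem for proper Lie groupoids of Crainic--Struchiner \cite{CrStr}: properness at $x$ gives a normal form for $G$ near the orbit $L$, and the induced foliation near $L$ is then the linearized foliation $\cF_{lin}$ under the identification of the normal bundle. Concretely, the $G$-action linearizes near $L$, hence the foliation it induces is isomorphic (as a $C^\infty$-module) to the foliation induced by the linearized action on $NL$, which by Cor.~\ref{cor:ind} is precisely $\cF_{lin}$. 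This yields the diffeomorphism required by Def.~\ref{def:lina}.

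For the direction $a)\Rightarrow b)$, assume $\cF$ is linearizable about $L$ and $H_x^x$ is compact. By linearizability we may replace $\cF|_U$ by $\cF_{lin}$ on a tubular neighborhood $V\subset NL$; by Cor.~\ref{cor:ind}, $\cF_{lin}$ is the foliation induced by the Lie groupoid action of $H_L$ on $NL$. I would then build the desired groupoid as the action groupoid of $H_L$ acting on (a tubular neighborhood inside) $NL$. Since $L$ has discrete essential isotropy, $H_L$ is a Hausdorff Lie groupoid by Thm.~\ref{ALintegr}, and the action groupoid $H_L\ltimes V\rightrightarrows V$ is a Hausdorff Lie groupoid inducing exactly $\cF_{lin}|_V$. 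Its isotropy at $x$ is $H_x^x$, which is compact by hypothesis; one then checks that properness of the action groupoid \emph{at the point $x$} follows from compactness of $H_x^x$ together with the fact that the $H_L$-orbits near $L$ are (restrictions of) the leaves, so the target-source map has compact fibres over a neighborhood of $(x,x)$. Transporting back through the linearizing diffeomorphism $V\cong U$ produces the groupoid $G\rightrightarrows U$ required in $b)$.

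The main obstacle I anticipate is the passage between ``properness at $x$'' and compactness of $H_x^x$ in the direction $a)\Rightarrow b)$: compactness of a single isotropy group does not in general imply properness of a Lie groupoid, so the claim can only be \emph{properness at the point $x$}, and I must verify the precise local definition of properness at $x$ (preimages of compact neighborhoods of $(x,x)$ under $(\bt,\bs)$ are compact) for the action groupoid. This requires using that near $L$ the action of the proper-at-$x$ groupoid is essentially the linear isotropy action of the compact group $H_x^x$ on $N_xL$, so a slice/tube argument shows the action is proper at $x$. The Hausdorffness hypotheses on $H_L$ and on $G$ are exactly what keep these topological arguments from breaking down, and I would flag where each is used.
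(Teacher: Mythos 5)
Your proposal follows essentially the same route as the paper: Crainic--Struchiner plus the quotient observation for $b)\Rightarrow a)$, and transporting the transformation groupoid $H_L\ltimes NL$ through the linearizing diffeomorphism for $a)\Rightarrow b)$. Two remarks on the points where your justification is thinner than the paper's. First, the obstacle you flag in $a)\Rightarrow b)$ is resolved more cleanly than by a slice/tube argument at $x$: since $H_L$ is \emph{transitive} over $L$ with compact isotropy $H_x^x$, it is isomorphic to the gauge groupoid of the principal $H_x^x$-bundle $H_x\to L$ and is therefore globally proper, and the paper's Lemma \ref{lem:prop} then shows that the transformation groupoid of \emph{any} action of such a groupoid on a submersion over $L$ is proper (not merely proper at $x$); note that your intermediate claim that compact $(\bt,\bs)$-fibres suffice for properness is not correct as stated. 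Second, in $b)\Rightarrow a)$ the identification of the foliation induced by the linearized groupoid $G_{lin}=G_L\ltimes NL$ with $\cF_{lin}$ does not follow directly from Cor. \ref{cor:ind} (which concerns the $H_L$-action); it is the content of the paper's Lemma \ref{lem:lingrfol}, asserting that linearizing the groupoid linearizes its foliation, and this requires a separate (if short) argument.
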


\begin{proof}
$a)\Rightarrow b).$
By Cor. \ref{cor:ind}, the linearized foliation $\cF_{lin}$ is   induced by a Lie groupoid action of $H_L$ on $NL$. In other words, $\cF_{lin}$ is the foliation induced by the transformation groupoid $H_L \ltimes NL \rightrightarrows NL$. This transformation groupoid is proper by Lemma \ref{lem:prop} below (applied to $H_L$), so in particular it is proper at $x\in L\subset NL$. Now use the assumption that $\cF$ is linearizable.

$b)\Rightarrow a).$ The linearization theorem for proper groupoids, in the version of Crainic-Struchiner \cite[Thm. 1]{CrStr}, implies that there is an isomorphism from $G$ to $G_{lin}|_V$, where we shrink $U$ if necessary and $V$ is a neighborhood of the zero section of $NL$. 
Here  the Lie groupoid $G_{lin}\rightrightarrows NL$ is the linearization of $G$ about $L$, which induces the foliation $\cF_{lin}$ on $NL$ by Lemma \ref{lem:lingrfol} below.
In particular, the base map of the above isomorphism is a diffeomorphism $U\to V$ that sends $\cF|_U$ to $\cF_{lin}|_V$, hence $\cF$ is linearizable about $L$.
Further, 
since $G$ is proper at $x$, its isotropy group $G_x^x$ is compact. Hence $H_x^x$, which is a quotient of $G_x^x$ due to \cite[Ex. 3.4(4)]{AndrSk}, is also compact. 
 \end{proof}

\begin{remarks}\label{rem:linearization}
\begin{enumerate}
\item\label{rem:a} 
%Let $L$ be a leaf of a foliated space $(M,\cF)$.
% and $x\in L$.
In general, a foliation linearizable about  a leaf $L$ does not come  from a  Lie groupoid
proper at $x$ (where $x\in L$).   For instance, the foliation on $\R^2$ induced by the   action of $SL(2,\R)$ on $\R^2$ (see \cite{AndrSk}, \cite{AnZa11}) is linearizable around zero (as the action is linear). However this foliation does not arise from a groupoid $G$ proper at $0$: if it did, $H_0^0$ would be  compact (being a quotient of $G_0^0$), contradicting the non-compactness of  $H_0^0  = SL(2,\R)$. The same applies, of course, if we replace $SL(2,\R)$ by $GL(2,\R)$, in fact for any foliation defined by a linear action of a non-compact group (see \cite[Ex. 4.6]{AnZa11}).

The situation is different if one assumes the set-up of Prop. \ref{prop:proper}. More precisely: assuming the leaf $L$ at $x \in M$ is embedded and  $H_x^x$ is compact, the foliation is linearizable at $L$ if{f} it comes from a Lie groupoid proper at $x$.

\item Notice that a linearization theorem for singular foliations 
would extend the Reeb stability theorem for regular foliations  (\cf \cite[Thm. 2.9]{MM}). Instances of the  linearization theorem in which additional structure is present were recently established: for the symplectic foliation of a Poisson manifold   by Crainic-Mar{c}u\c{t} in \cite{CrMar}, and for orbits of a proper Lie groupoid by Crainic-Struchiner in \cite{CrStr}. 

\item A way to approach such linearization results is by understanding certain averaging processes along a certain groupoid. The difficulty with a general singular foliation is that, although the holonomy groupoid is always longitudinally smooth (see \cite{Debord}), its $\bs$-fibers usually have jumping dimensions (unlike the case studied in \cite{CrStr}). For this reason one needs to somehow modify the usual averaging processes in order to obtain linearization conditions for an arbitrary singular foliation. This is a problem of different order, which deserves to be addressed in a separate article.

In fact, in view of the previous remarks, it seems reasonable to think that the compactness of $H_x^x$ might be one of the sufficient conditions for the singular version of the Reeb stability theorem. 
\end{enumerate}
\end{remarks}

We end   with two Lemmas {used in} the proof of Prop. \ref{prop:proper}.

\begin{lemma}\label{lem:prop}
Let $\Gamma\rightrightarrows L$ be a transitive Lie groupoid such that for some $x\in L$ the isotropy group $\Gamma_x^x$ is compact.
Then, for any action of $\Gamma$ on a surjective submersion $\pi \colon N\to L$, the corresponding transformation groupoid $\Gamma \ltimes N\rightrightarrows N$ is proper.
\end{lemma}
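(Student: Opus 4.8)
The plan is to derive properness of the transformation groupoid $\Gamma\ltimes N$ from properness of $\Gamma$ itself, the latter being exactly where the compactness of the isotropy is used. So the first step I would carry out is to show that \emph{$\Gamma$ is a proper Lie groupoid}. Since $\Gamma$ is transitive, fixing $x\in L$ it is canonically isomorphic to the gauge groupoid $\frac{\Gamma_x\times\Gamma_x}{\Gamma_x^x}$ of the principal $\Gamma_x^x$-bundle $\bs\colon\Gamma_x\to L$ (a standard fact, see e.g. \cite{KCHM}). Under this identification the anchor $(\bt,\bs)\colon\Gamma\to L\times L$ becomes a locally trivial fibre bundle with typical fibre $\Gamma_x^x$. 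As $\Gamma_x^x$ is compact by hypothesis, such a bundle projection is a proper map (cover a given compact $K\subseteq L\times L$ by finitely many trivialising opens and use that $\Gamma_x^x$ is compact). Hence $(\bt,\bs)$ is proper, i.e. $\Gamma$ is proper.

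The second step is a point-set reduction. Recall that $\Gamma\ltimes N$ is the closed submanifold $\{(g,n):\bs(g)=\pi(n)\}$ of $\Gamma\times N$, with $\bs(g,n)=n$ and $\bt(g,n)=g\cdot n$. Given a compact $K\subseteq N\times N$ I would set $K_1:=\pr_1(K)$ and $K_2:=\pr_2(K)$, which are compact in $N$, and observe that $(\bt,\bs)^{-1}(K)$ is closed and contained in
$$A:=\{(g,n)\in\Gamma\ltimes N:\ n\in K_2,\ g\cdot n\in K_1\}.$$
For $(g,n)\in A$ one has $\bs(g)=\pi(n)\in\pi(K_2)$ and $\bt(g)=\pi(g\cdot n)\in\pi(K_1)$, so the $\Gamma$-component satisfies $g\in C:=(\bt,\bs)^{-1}\big(\pi(K_1)\times\pi(K_2)\big)$, which is \emph{compact} by Step 1. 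Thus $A$ is a closed subset (cut out by the continuous conditions $n\in K_2$ and $g\cdot n\in K_1$) of the compact set $(C\times K_2)\cap(\Gamma\ltimes N)$, hence compact; and $(\bt,\bs)^{-1}(K)$, being a closed subset of the compact set $A$, is compact. This is exactly properness of $\Gamma\ltimes N$.

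I expect the only genuinely non-formal input to be Step 1: the passage through the gauge-groupoid picture, which is what converts ``compact isotropy'' into ``proper anchor''. Everything in Step 2 is routine topology organised around the fibre-product description of $\Gamma\ltimes N$; the compactness hypothesis on $\Gamma_x^x$ enters there only indirectly, through the properness of $\Gamma$, to guarantee that an arrow $(g,n)$ with source and target projecting into fixed compact subsets of $L$ has its $\Gamma$-component $g$ confined to a compact subset of $\Gamma$.
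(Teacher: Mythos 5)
Your proposal is correct and follows essentially the same route as the paper: first establish properness of $\Gamma$ via its identification with the gauge groupoid of the principal $\Gamma_x^x$-bundle $\Gamma_x\to L$, then bound the preimage of a compact set under the target-source map of $\Gamma\ltimes N$ inside $\left(\bt^{-1}(\pi(K_1))\cap\bs^{-1}(\pi(K_2))\right)\times K_2$ and conclude by closedness in a compact set. The only cosmetic difference is that you test properness on an arbitrary compact $K\subseteq N\times N$ and pass to its projections, whereas the paper works directly with products $K_1\times K_2$; these are equivalent.
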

\begin{proof}
First we point out that $\Gamma$ is a proper groupoid.  Indeed $\Gamma$ is isomorphic to the gauge groupoid for the principal $\Gamma_x^x$-bundle   $\Gamma_x\to L$, whose fiber is compact, so that the target-source map for the gauge groupoid is proper.

Given any compact subsets $K_1,K_2\subset N$, consider the preimage  $\psi^{-1}(K_1\times K_2)$  under the target-source map of the transformation groupoid, that is, under 
$\psi\colon (\Gamma \ltimes N)\to N\times N, (g,\xi)\mapsto (g\xi, \xi)$. Clearly $$\psi^{-1}(K_1\times K_2)=\{(g,\xi)\in \Gamma \ltimes N: g\xi\in K_1,  \xi \in K_2\}$$ is contained in $(\bt^{-1}(\pi(K_1))\cap \bs^{-1}(\pi(K_2)))\times K_2$, which is compact since $\Gamma$ is a  proper groupoid. (Here $\bt,\bs$ are the target and source maps of $\Gamma$.) Hence $\psi^{-1}(K_1\times K_2)$, being a closed subset of a compact one,  is compact.
\end{proof}

Let $G\rightrightarrows M$ be a Lie groupoid.  Let  $L$ be a leaf, and denote by $G_L$ the restriction of $G$ to $L$. As we pointed out in Rem. \ref{Psiacts}, there is an action of $G_L$ on the normal bundle $NL=TM|_L/TL$.
Given a bisection $\gamma \colon L \to G_L$ (viewed as a section of the source map of $G_L$), its action on $NL$ is given by the  bundle automorphism 
\begin{equation}\label{eq:glln}
(\bt \circ b)_*|_{L},
\end{equation}
 where $b \colon M \to G$ is any bisection of $G$ extending $\gamma$.
The \emph{linearization of $G$ about $L$}, denoted by  $G_{lin}$, is the transformation groupoid $G_L\ltimes NL\rightrightarrows NL$ of this action \cite[\S 1.2]{CrStr}. 
The following lemma states that the foliation of the linearized groupoid is the linearization of the groupoid foliation.

\begin{lemma}\label{lem:lingrfol}
Let $G\rightrightarrows M$ be a Lie groupoid, and denote by $\cF$ the induced foliation. Let  $L$ be an embedded leaf. Denote by $G_{lin}$ the linearization of $G$ about $L$, and denote by $\cF(G_{lin})$ the foliation  on $NL$ it induces.
Then  $\cF_{lin}=\cF(G_{lin})$.
\end{lemma}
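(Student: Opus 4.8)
The plan is to show that the two $C^{\infty}(NL)$-modules coincide by comparing their generators. By Def.~\ref{def:linF}, $\cF_{lin}$ is generated by $\{Y_{lin}:Y\in\cF\}$. On the other side, $G_{lin}=G_L\ltimes NL$, so by the definition of the foliation induced by a groupoid action, $\cF(G_{lin})$ is generated by the fundamental vector fields of the $G_L$-action on $NL$, i.e.\ by the image of the infinitesimal action $\Gamma(A|_L)\to\chi(NL)$, $\alpha\mapsto\alpha_{NL}$, where $A|_L$ is the Lie algebroid of $G_L$ (the restriction to $L$ of the Lie algebroid $A$ of $G$, with anchor $\rho$). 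Thus it suffices to prove that $\{\alpha_{NL}:\alpha\in\Gamma(A|_L)\}=\{Y_{lin}:Y\in\cF\}$.

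The key step is the identity $\alpha_{NL}=Y_{lin}$, where $Y:=\rho(\tilde\alpha)\in\cF$ for any local extension $\tilde\alpha\in\Gamma(A)$ of $\alpha$. To establish it, let $\{b_\epsilon\}$ be the flow of bisections of $G$ generated by $\tilde\alpha$; its base map is the flow of the anchor image, $\bt\circ b_\epsilon=\phi^\epsilon_Y$. By formula~\eqref{eq:glln}, the one-parameter family of bundle automorphisms of $NL$ through which $G_L$ acts is $(\bt\circ b_\epsilon)_*|_L=(\phi^\epsilon_Y)_*|_L$, namely the flow that $Y$ induces on the normal bundle. Differentiating at $\epsilon=0$ and unwinding the definition of $Y_{lin}$ --- which acts as $Y|_L$ on fiberwise constant functions and by $[f]\mapsto[Y(f)]$ on $C^{\infty}_{lin}(NL)\cong I_L/I_L^2$ --- one reads off that the generator of this family is exactly $Y_{lin}$, so $\alpha_{NL}=Y_{lin}$. (This is the computation performed in the proof of Prop.~\ref{transvrep} with $G_L$ in place of $H_L$; being a direct flow computation, it does not use the discreteness of the essential isotropy.)

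Granting the identity, the two generating sets agree. The inclusion $\{\alpha_{NL}\}\subseteq\{Y_{lin}:Y\in\cF\}$ is clear, since each $Y=\rho(\tilde\alpha)$ lies in $\cF$. For the reverse inclusion, recall that the foliation induced by $G$ is generated by $\rho(\Gamma(A))$; since $\rho$ is $C^{\infty}(M)$-linear this image is already a module, so every $Y\in\cF$ near $L$ equals $\rho(\tilde\alpha)$ for some local section $\tilde\alpha$ of $A$, whose restriction $\alpha$ to $L$ satisfies $\alpha_{NL}=Y_{lin}$. Therefore the generating sets coincide and $\cF_{lin}=\cF(G_{lin})$.

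The main point requiring care is the well-definedness behind $\alpha_{NL}=Y_{lin}$: the vector field $Y=\rho(\tilde\alpha)$ depends on the chosen extension $\tilde\alpha$, whereas $\alpha_{NL}$ depends only on $\alpha$. This is reconciled by the fact that $Y_{lin}$ descends to $\cF/I_L\cF$. Indeed, two extensions of $\alpha$ differ by a section in $I_L\Gamma(A)$, so the corresponding $Y$'s differ by an element $W\in I_L\cF$, and $W_{lin}=0$ for every $W\in I_L\cF$: writing $W=\sum_i f_iX_i$ with $f_i\in I_L$ and $X_i\in\cF$, we have $W|_L=0$, and for $f\in I_L$ the tangency of $X_i$ to $L$ gives $X_i(f)\in I_L$, whence $W(f)=\sum_i f_iX_i(f)\in I_L^2$ and $[W(f)]=0$. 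Alternatively, the whole comparison can be phrased through Cor.~\ref{cor:agree}, which already identifies $\{Y_{lin}:Y\in\cF\}$ with the image of $\nabla^{L,\perp}$ under~\eqref{eq:cdolin}; one then only needs that the infinitesimal action of $G_L$ on $NL$ is this same $\nabla^{L,\perp}$.
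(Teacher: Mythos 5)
Your proof is correct and follows essentially the same route as the paper's: both arguments reduce to the observation that linearizing the generator of a flow of base diffeomorphisms $\bt\circ b_\epsilon$ yields the generator of the induced flow $(\bt\circ b_\epsilon)_*|_L$ on $NL$, and both use eq.~\eqref{eq:glln} to identify the infinitesimal generators of the $G_L$-action with the vector fields $Y_{lin}$ for $Y\in\cF$. Your explicit check that $W_{lin}=0$ for $W\in I_L\cF$ (so that the identity $\alpha_{NL}=Y_{lin}$ is independent of the extension) is a welcome elaboration of a point the paper leaves implicit in the remark that $lin$ factors through $\cF/I_L\cF$.
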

\begin{proof}
First observe that, for any family $\phi_{\epsilon}$ of diffeomorphisms of $M$ preserving $L$ with $\phi_0=Id_M$,   the linearization of the vector field $\frac{d}{d\epsilon}|_0\phi_{\epsilon}$ is obtained applying $\frac{d}{d\epsilon}|_0$  to  the linearization   of the $\phi_{\epsilon}$'s, i.e. to 
$(\phi_{\epsilon})_*|_L$ (seen as an automorphism of $NL$). In particular, given   a family of bisections
 $b_{\epsilon} \colon M \to G$  with $b_0=Id_M$, the linearization of the vector field $\frac{d}{d\epsilon}|_0(\bt\circ b_{\epsilon})$ is exactly $\frac{d}{d\epsilon}|_0(\bt\circ b_{\epsilon})_*|_L$.

Both the foliations $\cF_{lin}$ and $\cF(G_{lin})$ are generated by linear vector fields on $NL$ (i.e. elements of $\vX_{lin}(NL)$): $\cF_{lin}$ by its very definition, and $\cF(G_{lin})$ because $G_{lin}$ is the transformation groupoid of an action by vector bundle automorphisms of $NL$. So it suffices to consider linear vector fields.

We have  $$\widehat{\cF}=\{\frac{d}{d\epsilon}|_0(\bt\circ b_{\epsilon})\},$$ where $b$ ranges over all families  $b_{\epsilon} \colon M \to G$ of bisections of $G$ with $b_0=Id_M$.

The linear vector fields in $\widehat{\cF(G_{lin}})$ are the infinitesimal generators of the $G_L$ action on $NL$. By eq. \eqref{eq:glln} they are exactly
$$\{\frac{d}{d\epsilon}|_0(\bt\circ b_{\epsilon})_*|_L\},$$
where for any  family  $\gamma_{\epsilon} \colon L \to G_L$ of bisections of ${G_L}$ with $\gamma_0=Id_L$ we choose a family of bisections
 $b_{\epsilon} \colon M \to G$ of $G$ with $b_0=Id_M$  extending it.
 
From the above observation it is clear that the set of linear vector fields in $\widehat{\cF}_{lin}$ and $\widehat{\cF(G_{lin})}$ agree, and therefore that $ \cF_{lin}$ and $ \cF(G_{lin})$ agree.
\end{proof}

\section{Further applications}

In this short section we present two applications of the material developped so far.

\subsection{Riemannian foliations}\label{subsec:rf}

 The following definition, when specialized to the case of regular foliations, reduces to the notion of Riemannian foliation given in the Introduction.

\begin{definition}\label{def:ada}Let $(M,\cF)$ be a foliated manifold. We say that a Riemannian metric $g$ on $M$ is \emph{adapted} if, for all leaves $L$, the bundle metric on $NL$ (induced by   $NL\cong (TL)^{\perp}$) is preserved by 
the action $\Psi_L$ of $H_L$ defined in Cor. \ref{cor:psiL}. 
\end{definition}
It would be interesting to study the geometric properties of (singular) foliations with an adapted metric. Not every foliation admits an adapted Riemannian metric, but foliations arising from proper Lie groupoids do, by the work of Pflaum-Posthuma-Tang \cite{PPT}. We present a further existence result, which relies on \cite{PPT}.

\begin{prop}\label{prop:Riem}
Let $(M,\cF)$ be a singular foliation and let $L$ be an embedded leaf. Assume that $\cF$ is linearizable about $L$ and that the Lie group  $H_x^x$ is compact, where $x\in L$.
 
Then there is a tubular neighborhood $U$ of $L$ such that $(U,\cF|_U)$ admits an adapted, complete Riemannian metric which makes it into a singular Riemannian foliation.
\end{prop}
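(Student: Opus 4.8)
The plan is to transport the problem to the normal bundle via linearizability, realize the linearized foliation as the orbit foliation of a \emph{proper} transformation groupoid, invoke the averaging results of Pflaum-Posthuma-Tang to produce an invariant (hence adapted) metric exhibiting a singular Riemannian foliation, and finally reconcile completeness with the passage to a genuine tubular neighborhood by means of an equivariant radial reparametrization.

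First I would reduce to $NL$. The three properties at stake --- being adapted (Def. \ref{def:ada}), being complete, and being a singular Riemannian foliation --- depend only on the foliated Riemannian manifold and are invariant under foliated diffeomorphisms, since such a diffeomorphism induces an isomorphism of holonomy groupoids conjugating the actions $\Psi_{L'}$ and carries perpendicularity and geodesics to perpendicularity and geodesics. By linearizability (Def. \ref{def:lina}) there is a diffeomorphism $\Theta \colon U_0 \to V_0$ from a tubular neighborhood of $L$ onto a neighborhood $V_0$ of the zero section of $NL$ intertwining $\cF|_{U_0}$ and $\cF_{lin}|_{V_0}$. Hence it suffices to build, on a neighborhood of the zero section of $NL$, an adapted complete metric exhibiting $\cF_{lin}$ as a singular Riemannian foliation, and then pull it back along $\Theta$.

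Next I would realize $\cF_{lin}$ through a proper groupoid. By Cor. \ref{cor:ind}, $\cF_{lin}$ on $NL$ is the foliation induced by the Lie groupoid action $\Psi_L$ of $H_L$ on $NL$, i.e. by the transformation groupoid $\Gamma := H_L \ltimes NL \rightrightarrows NL$. By Thm. \ref{ALintegr} $H_L$ is a transitive Lie groupoid, and $H_x^x$ is compact by hypothesis, so Lemma \ref{lem:prop} shows $\Gamma$ is proper. As $\Gamma$ is a proper Lie groupoid, the averaging technique of Pflaum-Posthuma-Tang \cite{PPT} provides a $\Gamma$-invariant metric $g_0$ on $NL$ for which the orbit foliation --- which is exactly $\cF_{lin}$ --- is a singular Riemannian foliation; using the associated-bundle description $NL \cong H_x\times_{H_x^x}N_xL$ with compact structure group $H_x^x$, complete flat fibres $N_xL$, and a complete metric on $L$, we may moreover take $g_0$ complete on all of $NL$. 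Finally $g_0$ is adapted: the holonomy groupoid of $\cF_{lin}$ is a quotient of $\Gamma$ by \cite[Ex. 3.4(4)]{AndrSk}, so the holonomy action $\Psi$ on the normal bundles of the leaves of $\cF_{lin}$ factors through the $\Gamma$-action, whence $\Gamma$-invariance of $g_0$ forces invariance of the induced normal metrics.

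Finally I would reconcile completeness with the tubular neighborhood. Using the identification $NL \cong Q = (H_x \times N_xL)/H_x^x$ of Prop. \ref{prop:equiv2m}, I would pick a fibrewise, $H_x^x$-equivariant radial diffeomorphism $\sigma$ carrying each fibre $N_{\bt(h)}L$ onto an open ball of suitably small, smoothly varying radius (possible since $H_x^x$ acts orthogonally for the invariant inner products); it induces a fibre-preserving diffeomorphism $\Sigma \colon NL \to V'$ onto an open disc bundle $V'$, and we arrange the radius small enough that $V'\subseteq V_0$. The crucial point is that $\Sigma$ preserves $\cF_{lin}$: by Prop. \ref{prop:equiv2m} 2) this foliation corresponds on $Q$ to the module generated by the vector fields $[(R_*Y,0)]$, which have vanishing $N_xL$-component and are therefore untouched by the purely fibrewise map $\Sigma$. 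Setting $g' := (\Sigma^{-1})^*g_0$ turns $\Sigma$ into a foliated isometry of $(NL,\cF_{lin},g_0)$ onto $(V',\cF_{lin}|_{V'},g')$, so $g'$ is complete, adapted, and exhibits $\cF_{lin}|_{V'}$ as a singular Riemannian foliation; pulling $g'$ back along $\Theta$ over $U := \Theta^{-1}(V')$ then yields the asserted metric. I expect this last step to be the main obstacle: \cite{PPT} delivers the structure on all of $NL$ whereas linearizability supplies only a neighborhood of the zero section, and the whole reconciliation hinges on the fact --- provided by Prop. \ref{prop:equiv2m} 2) --- that the ``horizontal'' generators of $\cF_{lin}$ survive radial rescalings, which is precisely what lets completeness persist under restriction to a disc bundle.
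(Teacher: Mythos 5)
Your proposal is correct and follows the same route as the paper: use linearizability together with the properness of the transformation groupoid $H_L\ltimes NL$ (Cor.~\ref{cor:ind}, Lemma~\ref{lem:prop}, i.e.\ the proof of Prop.~\ref{prop:proper}) to realize $\cF$ near $L$ as the foliation of a proper Lie groupoid, then invoke the averaging results of \cite{PPT} for the invariant transversal metric and the singular Riemannian property, and deduce adaptedness from the fact that $H$ is a quotient of that groupoid via \cite[Ex.~3.4(4)]{AndrSk}. The one place you diverge is the final ``reconciliation'' step: the paper simply restricts the proper groupoid to the tubular neighborhood $U$ (the restriction of a proper groupoid to an open subset is again proper, since $\bs^{-1}(V)\cap\bt^{-1}(V)$ is the full preimage of $V\times V$ under the target-source map) and applies \cite[Prop.~3.13]{PPT} \emph{there}, which already delivers a complete metric on $U$; so the equivariant radial reparametrization $\Sigma$ you build to transport completeness from all of $NL$ into a disc bundle is not needed. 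Your $\Sigma$-argument is nevertheless sound --- the generators $[(R_*Y,0)]$ of $\cF_{lin}$ on $Q$ are indeed invariant under a fibrewise $H_x^x$-equivariant radial map, by Prop.~\ref{prop:equiv2m}~2) --- so what you lose in economy you gain in an explicit mechanism for seeing why completeness survives the passage to a neighborhood of the zero section.
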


Recall that a \emph{singular Riemannian foliation} in Molino's sense (\cite[\S 6]{Mo88}, see also \cite[\S 1]{ABT}) consists of manifold $M$  with a suitable\footnote{More precisely, it is required that  every tangent vector to a leaf can be extended to a vector field   on $M$ tangent to the leaves.} partition into immersed submanifolds (leaves), together with 
a complete Riemannian metric such that if a  geodesic  intersects perpendicularly a leaf, it intersects perpendicularly every leaf it meets.

\begin{proof}
By (the proof of) Prop. \ref{prop:proper}, there exists a neighborhood U and a proper Lie groupoid $\Gamma$ over $U$ inducing the foliation $\cF|_U$. Hence $U$ admits a complete  Riemannian metric $g$ whose transversal component is preserved by the action of $\Gamma$, by   \cite[Prop. 3.13]{PPT} 
(cf. also \cite[Def. 3.11]{PPT}). The metric $g$ is adapted (in the sense of Def. \ref{def:ada}), as a consequence of the fact that $H|_U$ is a quotient of $\Gamma$ \cite[Ex. 3.4.(4)]{AndrSk}.
Further,  $g$  makes the leaves of the foliation $\cF|_U$ into singular Riemannian foliation by \cite[Prop. 6.4]{PPT}. 
\end{proof}

\subsection{Deformations of singular foliations}\label{section:deform}

As a further application of the Bott connection $\nabla^{L,\perp}$ introduced in \S \ref{subsection:linholrep}, we look at deformations of singular foliations. 
%In \S \ref{subsection:deform} 
We consider deformations of singular foliations (keeping the underlying $C^{\infty}(M)$-module structure fixed),
%. {In \S \ref{subs:dc}}
and we attach to them a class in foliated cohomology. 
%In \S \ref{subsection:restrleaf} we give a description of  the restriction of the foliated cohomology  to a leaf, generalizing Heitsch's \cite[Thm 3]{Heitsch}. 

\subsubsection{Deformations}\label{subsection:deform}

%In this subsection we view singular foliations as Lie-Rinehart algebras. We study the resulting notion of  deformation for a singular foliation, which requires that the underlying $C^{\infty}(M)$-module structure be kept fixed.

A regular foliation $F\subset TM$ on a manifold $M$ can be equivalently described as a Lie algebroid over $M$ with injective anchor\footnote{More precisely: if $F\subset TM$ is a regular foliation, then the restriction of the Lie bracket of vector fields $[\cdot,\cdot]_{\vX(M)} $ and the inclusion into $TM$ make $F$ into a Lie algebroid with injective anchor. If $(A,[\cdot,\cdot]_A,\rho_A)$ is a Lie algebroid and $\rho_A$ is injective, then it is canonically isomorphic to $(\rho_A(A),[\cdot,\cdot]_{\vX(M)}, \text{ inclusion})$
and $\rho_A(A)$ is a regular foliation.}. Since injectivity is an open condition, a smooth family of foliations of $M$ parametrized by $t\in(-\epsilon,\epsilon)$ which agrees with $F$ at $t=0$ is the same thing as a smooth family of Lie algebroids over $M$ which agrees with $F$ at $t=0$. We now consider the case of singular foliations.

Recall  that a \emph{Lie-Rinehart algebra} (an algebraic version of the notion of Lie algebroid) over  a unital commutative algebra $\cC$ consists of a unital left $\cC$-module $\cM$, a Lie algebra structure $[\cdot,\cdot]$ on $\cM$, and 
a map $\rho \colon \cM \to Der(\cC)$ which is a $\cC$-module morphism, a Lie algebra morphism, and satisfies  the Leibniz rule (\cf \cite[\S 1]{Huebschmann}). 
%\footnote{Note that \cite{CMdef} uses a different notion of morphism, which corresponds to Lie algebroid isomorphisms over $Id_M$.}. 
An isomorphism between Lie-Rinehart algebras $(\cC,\cM)$ and $(\cC',\cM')$ consists of an algebra isomorphism $\cC\to \cC'$ and a Lie algebra isomorphism $\cM\to \cM'$ which intertwine both the module structures and  $\rho,\rho'$.
 
In the following, for a given vector bundle $E\to M$, we take $\cC=
C^{\infty}(M)$ and $\cM$ a locally finitely generated $C^{\infty}(M)$-submodule of $\Gamma(E)$, so that $\cM$ is a $\cC$-module in a natural way.
\begin{definition}\label{dfn:smvar}
We say that 1-parameter family $\{m_t\}_{t \in \R}$ of elements of $\cM$ \emph{varies smoothly with $t$} if it does when regarded as a 1-parameter family of elements of $\Gamma(E)$, i.e. if $\{(t,m_t):t\in \R\}$ is a smooth section of the vector bundle over $\R\times M$ obtained pulling back $E$. We say that 1-parameter family of linear maps $\phi_t \colon \cM\to \cM$ varies smoothly with $t$ if, for every $m\in \cM$, the 1-parameter family $\{\phi_t(m)\}$ of elements of $\cM$ varies smoothly with $t$.
\end{definition}

Following \cite[Def. 1]{CMdef} we define:
\begin{definition}\label{CMdefLR}
 Let $M$ a manifold, $E\to M$ a vector bundle,
 $\cM$ a fixed locally finitely generated $C^{\infty}(M)$-submodule of $\Gamma(E)$ and $I=(-\epsilon,\epsilon)$ an interval.
\begin{enumerate}
 \item A family of Lie-Rinehart algebras over $I$ is a smoothly varying family $(\cA_t)_{t\in I} = (\cM,[\cdot,\cdot]_t,\rho_t)$.
 \item The families $(\cA_t)_{t \in I}$ and $(\cA'_t)_{t \in I}$ are equivalent if there exists a family of Lie-Rinehart algebra isomorphisms $h_t : \cA_t \to \cA'_t$ varying smoothly with respect to $t$.
 \item A deformation of a Lie-Rinehart algebra $\cA=(\cM,\rho,[\cdot,\cdot])$ is a family $(\cA_t)_{t \in I}$ of Lie-Rinehart algebras such that $\cA_0 = \cA$.
 \item   Two deformations $(\cA_t)_{t \in I}$ and $(\cA'_t)_{t \in I}$ of $(\cA,\rho,[\cdot,\cdot])$ are equivalent if there exists an equivalence $h_t : \cA_t \to \cA'_t$ with $h_0 = id$.
   %\item \label{iteme} A deformation of a foliation $(M,\cF)$ is a family $(M,\cF_t)_{t \in I}$ of foliations corresponding to a deformation of the Lie-Rinehart algebra $(\cF,[\cdot,\cdot],i \colon \cF \hookrightarrow Der_c(C^{\infty}(M))$.
\end{enumerate}
\end{definition}

Similarly to the regular case, a singular foliation $\cF \subset \vX_c(M)$ can be equivalently described by saying that $\widehat{\cF}$ has the structure of a Lie-Rinehart algebra $(\cM,[\cdot,\cdot],\rho)$ over $C^{\infty}(M)$ for which $\rho \colon \cM \to Der(C^{\infty}(M))=\vX(M)$ is injective.  
In the following we work with $\widehat{\cF}$ (as introduced in Def. \ref{def:folhat}) rather than with $\cF$, in order to parallel the known  results   for regular foliations.

% and takes values in compactly supported vector fields. 

Denote by $\widehat{\cF}_{mod}$ the $C^{\infty}(M)$-module underlying  $\widehat{\cF}$ (i.e., $\widehat{\cF}_{mod}$ is $\widehat{\cF}$ seen simply as $C^{\infty}(M)$-module).  $\widehat{\cF}_{mod}$ is a locally finitely generated $C^{\infty}(M)$-submodule of $\vX(M)$.
Applying Def. \ref{CMdefLR}, we obtain that a \emph{deformation of a foliation} $(M,\cF)$ consists of Lie brackets 
$[\cdot,\cdot]_t$ and maps $\rho_t$, agreeing with the Lie bracket of vector fields $[\cdot,\cdot]_{\vX(M)}$ and the inclusion into $\vX(M)$ for $t=0$,
 such that $$(\widehat{\cF},[\cdot,\cdot]_t,\rho_t)$$ is a family of 
Lie-Rinehart algebras.
Notice that for every $t$ the module $\{\rho_t(X): X\in \widehat{\cF}\}$ corresponds to a foliation, and $\rho_t$ (as long as it is injective) determines $[\cdot,\cdot]_t$ by virtue of  $\rho_t([X,Y]_t)=[\rho_t(X),\rho_t(Y)]_{\vX(M)}$.

\begin{ep}
%\begin{enumerate}
%\item
 Consider a foliation $(M,\cF)$ and let $\{\varphi^t\}_{t\in I}$ be a one-parameter family of diffeomorphisms of $M$. Denote by $[\cdot,\cdot]$ the Lie bracket of vector fields in $\cF$.
Then $(\widehat{\cF}_{mod},[\cdot,\cdot],(\varphi^t)_*)$ is a deformation as above.  
Further it is equivalent to the constant deformation (i.e., it is a trivial deformation), an equivalence being given by  $\{(\varphi^t)_*\}_{t\in I}$.
All trivial deformations of $(M,\cF)$ are of this kind, as a consequence of the fact that any algebra automorphism of $C^{\infty}(M)$ is the pullback of functions  by a diffeomorphism of $M$.
%\end{enumerate}
\end{ep}

The fact that in our notion of deformation of a foliation $(M,\cF)$ 
we insist on fixing the $C^{\infty}(M)$-module structure   makes our notion of deformation somewhat restrictive. For instance,
if two  foliations  $\cF_1,\cF_2$ on $M$ are isomorphic as 
$C^{\infty}(M)$-modules, then there exists a diffeomorphism $\phi$ of $M$ such that for all $x\in M$ the vector spaces $(\cF_1)_x$ and  $(\cF_2)_{\phi(x)}$ are isomorphic.

\subsubsection{Deformation cocycles}\label{subs:dc}

In \cite{Heitsch} deformations of regular foliations were shown to be controlled by ``foliated cohomology'', whereas in \cite{CMdef} by ``deformation cohomology''. Let us explain how these cohomologies are defined in our framework, i.e. when $\cF$ is a singular foliation:
\begin{itemize}
\item[1)] $H_{def}^*(\widehat{\cF})$ is the \emph{deformation cohomology} defined exactly as in the beginning of  \cite[\S 2]{CMdef}, replacing the $C^{\infty}(M)$-module of Lie algebroid sections with $\widehat{\cF}_{mod}$ (notice that the definition given there holds for any Lie-Rinehart $C^{\infty}(M)$-algebra.). An $n$-cochain is a multilinear and antisymmetric map $D : \underbrace{\widehat{\cF} \otimes \dots \otimes \widehat{\cF}}_\text{$n$-times} \to \widehat{\cF}$ which is a multiderivation. Namely, there exists a $C^{\infty}(M)$-multilinear map $\sigma_D : \underbrace{\widehat{\cF} \otimes \dots \otimes \widehat{\cF}}_\text{$n-1$-times} \to \vX(M)$, called the \textit{symbol} of $D$, such that $$D(X_1,\dots,fX_n) = fD(X_1,\dots,X_n) + \sigma_D(X_1,\dots,X_{n-1})(f)X_n$$ for all $X_1,\dots,X_n \in \widehat{\cF}$ and $f \in C^{\infty}(M)$. Notice that the above expression determines $\sigma_D$ uniquely. The boundary map $\delta : C^n_{def}(\widehat{\cF}) \to C^{n+1}_{def}(\widehat{\cF})$ is given by a suitable Eilenberg-Maclane formula (see \cite[\S 2]{CMdef}).

\item[2)]
$H^*(\widehat{\cF};\cN)$  is the \emph{foliated cohomology} defined as in \cite[\S 1]{Heitsch}, but using the map $ \widehat{\cF}\times \cN \to \cN$ induced by the Lie bracket of vector fields (see \S \ref{subsection:linholrep}), which makes  $\cN$ into a representation of $\widehat{\cF}$. An $n$-cochain is a map of $C^{\infty}(M)$-modules $\underbrace{\widehat{\cF}\wedge \dots \wedge \widehat{\cF}}_\text{$n$-times} \to \cN$.

\item[3)] The two cohomologies are related via a canonical map  
\begin{equation} \label{eqn:defiso}
 H^n_{def}(\widehat{\cF}) \to H^{n-1}(\widehat{\cF};\cN)
\end{equation} 
defined as follows: Given a cochain $D \in C^{n}_{def}(\widehat{\cF})$ consider its symbol $\sigma_{D}$ which is an $\vX(M)$-valued map. The assignment $D \to \natural \circ \sigma_D$ where $\natural : \vX(M) \to \vX(M)/\widehat{\cF} = \cN$ is the quotient map, induces a well defined map in cohomology. When $\widehat{\cF}$ corresponds to a regular foliation it was shown in \cite{CMdef} that this map is an isomorphism. 

\end{itemize} 

The following result and its proof is  an immediate adaptation
to the context of foliations (or Lie-Rinehart $C^{\infty}(M)$-algebras) 
 of  Prop. 2 and   Thm. 2  of \cite{CMdef}.
\begin{itemize}
 %\item[A)] 
 %Every  deformation defines a cocycle $c : \cF \wedge \cF \to \cF$ by $$c(a,b) = \frac{d}{dt}|_{t=0}([a,b]_t).$$ The cohomology class $[c]\in H^{2}_{def}(\cF)$   depends only on the equivalence class of the deformation.
 \item[A)] Given a deformation of $\cF$, consider the cocycles in 
 $C^{2}_{def}(\widehat{\cF})$ given by

 $$c_t(a,b) = \frac{d}{ds}|_{s=t}([a,b]_s)$$ for all $t$, and assume that $M$ is compact. Then the deformation is trivial (that is, equivalent to the constant deformation)
 % of $\cF$
 if{f} there is a smooth family  $D_t\in C^{1}_{def}(\widehat{\cF})$ with $\delta(D_t)=c_t$. 
\end{itemize}

  When $\cF$ is regular, say $\cF=C^{\infty}_c(M,F)$ for an involutive regular distribution $F$ on $M$, and $F_t\subset TM$ is a smooth deformation of   $F$ by  involutive regular distributions, then Heitsch \cite{Heitsch} gave a geometric construction of  a class in $H^1(\widehat{\cF};\cN)$, using orthogonal projections $TM\to F_t$. In \cite{CMdef} it was shown that Heitsch's class corresponds to the class $[c_0] \in H^2_{def}(\widehat{\cF})$ via the canonical isomorphism 
(\ref{eqn:defiso}).

Given a deformation of a singular foliation now, it is impossible to construct geometrically a class in $H^1(\widehat{\cF};\cN)$ as in \cite{Heitsch} (the orthogonal projections used in \cite{Heitsch} are no longer smooth in the singular case). We do have the deformation class $[c_0] \in H^2_{def}(\widehat{\cF})$ defined in A) though, which we can carry to a class in $H^1(\widehat{\cF};\cN)$ using the map (\ref{eqn:defiso}). It would be interesting to investigate what this class is the obstruction to.
%This element deserves the name ``Heitsch's class'', as
In the regular case it is exactly Heitsch's class \cite{Heitsch}. 
%,  in analogy to A) above.

\appendix

\section{Appendix}

\subsection{Proofs of Theorem \ref{globalaction} and Proposition \ref{globalactionong}
%Section \ref{section:geomhol}
}\label{holproofs}

Let $(M,\cF)$ be a foliation. 
If $W\subset \R^n\times M$ is a minimal bi-submersion obtained from a basis of $\cF_x$ and $b \colon M_0 \to \R^n$ a bisection defined in a neighborhood of $x$ in $M$, then
in general it is not true that $t b$ is a bisection at $x$ for all $t \in [0,1]$, as the following example shows.
\begin{ep}\label{notb}
If $M=\R$ and $\cF=\langle \partial_{x} \rangle$, then $b= -2x \colon M \to \R$ is a bisection. For any $t$, the map $\bt(t b) \colon M \to M$ is $x(1-2t)$, so  for $t=\frac{1}{2}$ it is not a diffeomorphism. Even more is true: since the diffeomorphism carried by $b$ is orientation-reversing, it is not isotopic to the identity.
\end{ep}

To prove Theorem \ref{globalaction} we will need bisections $b$ such that $t b$ is a bisection at $x$ for all $t \in [0,1]$. Their existence is guaranteed by the following two lemmas.

\begin{lemma}\label{epsb}
Let $x\in(M,\cF)$, $\{X_i\}_{i\le n}$ generators for $\cF$ in a small neighborhood $M_0$ of $x$ defining a basis of 
$\cF_x$.
Let $W\subset \R^{n}\times M_0$ be the corresponding bi-submersions, and 
denote by  $L$ the leaf through $x$. 
 
If $b$ is a section of $\bs \colon M_0 \to W$  such that $(d_xb)(T_xL)=\{0\}\times T_xL$ then the image of $b$ is a bisection near $x$.
Further, $\emph{image}(tb)$ is a bisection at $x$ for all $t \in [0,1]$.
\end{lemma}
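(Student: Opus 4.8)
The plan is to reduce the statement to the invertibility of a single linear map. Writing $b(y)=(\beta(y),y)$ with $\beta\colon M_0\to\R^n$ and $u:=b(x)$, the image of $b$ is a section of the submersion $\bs$, so $\bs$ restricts to a diffeomorphism on it automatically; hence image$(b)$ is a bisection near $x$ if and only if $\bt\circ b$ is a local diffeomorphism near $x$, i.e. if and only if $d_x(\bt\circ b)\colon T_xM_0\to T_{\bt(u)}M$ is an isomorphism. I would prove this by comparing the short exact sequences $0\to T_xL\to T_xM_0\to N_xL\to 0$ and $0\to T_{\bt(u)}L\to T_{\bt(u)}M\to N_{\bt(u)}L\to 0$: I will check that $d_x(\bt\circ b)$ carries the first into the second, restricts to an isomorphism on the leaf-tangent parts, and induces an isomorphism on the normal quotients, so that it is an isomorphism by the five lemma.

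For the normal quotients, note that $\tilde v:=d_xb(v)$ is an $\bs_*$-lift of any $v\in T_xM_0$, and the map induced on $N_xL\to N_{\bt(u)}L$ is $[v]\mapsto[\bt_*\tilde v]$. This is well defined because $\bt(\bs^{-1}(x))\subseteq L$ forces $\bt_*(\ker d_u\bs)\subseteq T_{\bt(u)}L$, and it is surjective because $\bt$ is a submersion; being a surjection between spaces of equal finite dimension, it is an isomorphism. Observe that this step does not use the hypothesis on $b$ — it is just the (always invertible) linear holonomy.

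The hypothesis enters decisively in the leaf-tangent part, which I expect to be the main obstacle. By assumption $d_xb(T_xL)=\{0\}\times T_xL$, which is tangent to the submanifold $\{(\beta(x),y):y\in L\}$ of $W$. On this submanifold $\bt$ is the time-one flow $y\mapsto exp_y(Z)$ of the fixed vector field $Z:=\sum_i\beta_i(x)X_i\in\cF$; since the flow of a field in $\cF$ preserves leaves, it maps a neighbourhood of $x$ in $L$ diffeomorphically into $L$, and therefore $d_x(\bt\circ b)$ restricts to an isomorphism $T_xL\to T_{\bt(u)}L$. It is exactly here that Example \ref{notb} is ruled out: without the hypothesis $d_xb(T_xL)$ would have a nonzero $\R^n$-component, and after rescaling by a factor $t$ the leaf-part could degenerate. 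This finishes the proof that image$(b)$ is a bisection near $x$. Finally, for the last assertion I would simply note that $tb$ satisfies the same hypothesis, since $d_x(t\beta)|_{T_xL}=t\,d_x\beta|_{T_xL}=0$; after shrinking $M_0$ so that $(t\beta(y),y)\in W$ for all $t\in[0,1]$ and $y$ near $x$, the first part applied to $tb$ yields that image$(tb)$ is a bisection at $x$ for every $t\in[0,1]$.
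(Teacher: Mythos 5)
Your proof is correct, and it takes a genuinely different (though ultimately equivalent) route from the paper's. The paper works entirely inside $T_{b(x)}W$ and shows directly that $T_{b(x)}(\mathrm{image}(b))\cap \ker d_{b(x)}\bt=\{0\}$ by a chain of equalities: intersecting with $(d_{b(x)}\bs)^{-1}(T_xL)$ costs nothing because $\bs$ maps $\bt$-fibres into leaves, this reduces the intersection to $d_xb(T_xL)\cap\ker d_{b(x)}\bt=(\{0\}\times T_xL)\cap\ker d_{b(x)}\bt$ by the hypothesis, and the latter vanishes because $\{0\}\times T_xL$ is tangent to the constant bisection through $b(x)$, which carries the diffeomorphism $exp(\sum_i\beta_i(x)X_i)$. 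You instead push everything down to $M$ and prove that $d_x(\bt\circ b)$ is invertible via the five lemma on the normal exact sequences: the induced map $N_xL\to N_{\bt(u)}L$ is the choice-independent, always invertible linear holonomy of Prop.~\ref{globallinaction}, and the hypothesis is used only to identify $d_x(\bt\circ b)|_{T_xL}$ with $d_x\bigl(exp(\sum_i\beta_i(x)X_i)\bigr)|_{T_xL}$. The two key inputs --- that $\bs_*$ (resp.\ $\bt_*$) maps the kernel of the other differential into the leaf tangent, and the comparison of $b$ along $L$ with the ``frozen'' constant bisection of value $\beta(x)$ --- are exactly the ones the paper uses, but your decomposition makes transparent that transversality in the normal directions is automatic and that the hypothesis is needed precisely to prevent the leafwise degeneration of Example~\ref{notb}; the price is that you must verify the well-definedness and surjectivity of the normal map, which the paper only records later (in Prop.~\ref{globallinaction}) and which you correctly supply. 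One shared loose end: for the final assertion both you and the paper implicitly need $(t\beta(y),y)$ to lie in the domain of $\bt$ for all $t\in[0,1]$; this is harmless, since $exp_y(\sum_i t\beta_i(y)X_i)$ is the time-$t$ flow of $\sum_i\beta_i(y)X_i$ and hence exists whenever the time-one flow does.
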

\begin{proof} Let $y:=\bt(b(x))\in L$.
Clearly the source $\bs$ maps $\bt^{-1}(y)$ into $L$. Hence $d_{b(x)}\bs$ maps $T_{b(x)}\bt^{-1}(y)$ into $T_xL$. So
 \begin{align*}
&T_{b(x)}(\text{image}(b))\cap T_{b(x)}\bt^{-1}(y)\\
=&T_{b(x)}(\text{image}(b))\cap T_{b(x)}\bt^{-1}(y)\cap (d_{b(x)}\bs)^{-1}(T_xL)\\
=&T_{b(x)}(\text{image}(b|_L))\cap T_{b(x)}\bt^{-1}(y)\\
=&\{0\}.
\end{align*}
The last equality holds for the following reason: By our assumption $T_{b(x)}(\text{image}(b|_L))= \{0\}\times T_xL=T_{b(x)}(\text{graph}(\hat{b}|_L))$ where $\hat{b} \colon  M_0 \to \R^n$ is the constant map with value $b(x)$. The latter   defines a bisection as $\bt \circ (\hat{b}\times Id_{M_0})$ is a diffeomorphism, namely $exp(\sum_{i \le n} b(x)_i X_i)$.

  The last statement of the lemma holds simply because $d_x(t b)(T_xL)=
\{0\}\times T_xL$.\end{proof}
   
\begin{remark} 
We make a brief comment about the composition of path-holonomy bi-submersions. 
For $\alpha=1,2$ let $x^{\alpha}\in M$,    $\{X_i^{\alpha}\}_{i\le n}$ generators for $\cF$ defining a basis of 
$\cF_{x^{\alpha}}$, and denote by $W^{\alpha}$   the corresponding bi-submersions. By \cite[Prop. 2.10 a)]{AndrSk}  $W^{\alpha}$ is a neighborhood of $(0,x^{\alpha})$ in  $\R^{n}\times M$. The composition\footnote{Notice that this might be empty, for instance if the supports of the $\{X_i^{2}\}$ are disjoint from the support of the $\{X_i^{1}\}$.}
 of the two bi-submersions \cite[Prop. 2.4  b))]{AndrSk} is $$W^2 \circ W^1=
\{(\lambda^2,y^2),(\lambda^1,y^1):exp_{y^1}(\sum \lambda_i^1 X_i^1)=y^2, (\lambda^{\alpha},y^{\alpha})\in W^{\alpha}\}\subset (\R^n\times M)\times (\R^n \times M).
$$
As the component $y^2$ is determined by the other three, $W_2\circ W_1$
  can be identified canonically with $$\{(\lambda^2,\lambda^1,y^1):(\lambda^1,y^1)\in W^1, (\lambda^2, exp_{y^1}(\sum \lambda_i^1 X_i^1))\in W^2\}\subset
\R^n  \times \R^n \times M.$$ In the following we will freely switch from viewing an $N$-fold composition of minimal bi-submersions as an open subset of $(\R^n \times M) \times\cdots \times (\R^n \times M)$ to viewing it as an open subset of 
$\R^n \times\cdots \times \R^n \times M$.
\end{remark}

\begin{lemma}\label{epsbmany}
Let $W$ be any bi-submersion in the path-holonomy atlas   containing the point $x$, and $M_0$ a neighborhood of $x$ in $M$. 
Then $W$ is isomorphic to a finite composition of path-holonomy bi-submersions, so we may assume that $W\subset \R^n\times\cdots \times \R^n\times M$ is such a composition.

The graph  of any map $b \colon M_0 \to \R^n\times\cdots \times \R^n$
%section $b$of $\bs \colon M\to W'$  
satisfying $(d_xb)(T_xL)=0$ can be canonically deformed to the   bisection $\{0\}\times M_0$ by a path of bisections of $W$.
\end{lemma}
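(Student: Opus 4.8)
The plan is to prove the two assertions in order. For the first, recall (see \cite{AndrSk}) that $W$ lies in the path-holonomy atlas, hence is a finite product of path-holonomy bi-submersions and of their inverses; since the inverse of the path-holonomy bi-submersion generated by $X_1,\dots,X_n$ is isomorphic to the path-holonomy bi-submersion generated by the same vector fields, via $(y,\xi)\mapsto (exp_y(\sum_i \xi_i X_i),-\xi)$, we may write $W\cong W^N\circ\cdots\circ W^1$ with each $W^\alpha$ a path-holonomy bi-submersion and identify $W$ with an open subset of $\R^n\times\cdots\times\R^n\times M$ exactly as in the remark preceding this lemma. Shrinking the factors, we arrange that each $W^\alpha$ has fibres star-shaped about $0\in\R^n$.

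Next I would write down the canonical deformation. Writing $b=(b^N,\dots,b^1)$ with each $b^\alpha\colon M_0\to\R^n$, I set
\[
b_t\colon M_0\to \R^n\times\cdots\times\R^n\times M,\qquad b_t(y)=(tb^N(y),\dots,tb^1(y),y),\qquad t\in[0,1].
\]
Then $b_1$ is the graph of $b$, $b_0$ is the zero section $\{0\}\times M_0$, and each $b_t$ is a section of $\bs_W$ (the source being the last coordinate). Two things must be checked: that $\text{image}(b_t)\subset W$ for all $t$, and that each $b_t$ is a bisection near $x$. For the containment, the set $\{(t,y)\in[0,1]\times M_0: b_t(y)\in W\}$ is open and meets every slice; since $[0,1]$ is compact it suffices, after shrinking $M_0$, to know $b_t(x)\in W$ for all $t$, and this holds once the star-shaped fibres are small enough that the iterated flows defining $\bt_W$ remain defined and stay in the relevant charts for $t\in[0,1]$. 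This is a routine continuity-and-compactness argument.

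The heart of the proof is that each $b_t$ is a bisection; this is the generalization of Lemma \ref{epsb} to compositions. Fix $t$, put $u:=b_t(x)$ and $y_t:=\bt_W(u)\in L$. Since $\text{image}(b_t)$ has dimension $\dim M$ and $\ker d_u\bt_W$ has dimension $\dim W-\dim M$, it is enough to show $T_u(\text{image}(b_t))\cap \ker d_u\bt_W=\{0\}$. Given $v$ in this intersection, write $v=d_xb_t(w)$ with $w=d_u\bs_W(v)\in T_xM$. As the source and target of any point of $W$ lie in the same leaf and $y_t\in L$, the map $\bs_W$ sends $\bt_W^{-1}(y_t)$ into $L$; since $v\in\ker d_u\bt_W=T_u\bt_W^{-1}(y_t)$ this forces $w\in T_xL$. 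The hypothesis $(d_xb)(T_xL)=0$ then kills all the $\R^n$-components of $d_xb_t(w)$, so $v=(0,\dots,0,w)$. Finally, freezing the fibre coordinates at $tb(x)$, the target map restricts on that slice to the diffeomorphism $g^N_t\circ\cdots\circ g^1_t$, where $g^\alpha_t$ is the time-one flow of $\sum_i (tb^\alpha(x))_i X^\alpha_i$; hence $d_u\bt_W(0,\dots,0,w)=d_x(g^N_t\circ\cdots\circ g^1_t)(w)$, which vanishes only for $w=0$. Thus $v=0$ and $b_t$ is a bisection.

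The main obstacle is precisely this transversality for all $t$ simultaneously: as the fibre coordinates are scaled the intermediate base points move, so one cannot apply Lemma \ref{epsb} factor by factor. The device that unlocks it is the observation that, along a slice with frozen fibre coordinates, the composite target map is an honest diffeomorphism (a composition of flows), which makes the single-bi-submersion computation go through verbatim for the composition and yields $b_t$ as a bisection for every $t$.
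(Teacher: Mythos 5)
Your proof is correct, but for the main (second) assertion it takes a genuinely different route from the paper's. The paper deforms the graph of $b=(b^N,\dots,b^1)$ by switching on the factors \emph{one at a time}: it uses the piecewise path on $[0,N]$ given by $b_t=(0,\dots,0,tb^1)$ for $t\in[0,1]$, then $b_t=(0,\dots,0,(t-1)b^2,b^1)$ for $t\in[1,2]$, and so on, and justifies that each stage is a bisection by repeated appeal to Lemma \ref{epsb} (applied to one minimal factor at a time, the previously switched-on factors being frozen). You instead scale \emph{all} fibre components simultaneously by $t\in[0,1]$ and, as you correctly observe, Lemma \ref{epsb} cannot then be invoked factor by factor; so you redo its transversality computation directly for the composite: pull a vector in $T_u(\mathrm{image}(b_t))\cap\ker d_u\bt_W$ back to $T_xL$ via $\bs_W$, kill its fibre components using $(d_xb)(T_xL)=0$, and conclude by noting that $\bt_W$ restricted to the slice with frozen fibre coordinates is the composition of time-one flows $g^N_t\circ\cdots\circ g^1_t$, hence a diffeomorphism. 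This is a clean generalization of the proof of Lemma \ref{epsb} and it does establish the statement; your treatment of the containment $\mathrm{image}(b_t)\subset W$ via the tube lemma is sketchy but no worse than the paper, which does not address that point at all. The one thing your route loses is downstream convenience: in the proof of Theorem \ref{globalaction} the paper uses precisely its sequential path \eqref{bt} to write the generating time-dependent vector field in the simple form $Z_t=\sum_i((\phi_t^{-1})^*b_i^{\alpha})\cdot X_i^{\alpha}$ on each interval $[\alpha-1,\alpha]$; with the simultaneous scaling that formula becomes a sum over all factors involving pushforwards through the intermediate flows, so if you adopt your deformation you would have to redo that computation accordingly.
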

\begin{proof} Every bi-submersion  $W$ in the path-holonomy atlas    is by definition the composition of path-holonomy bi-submersions and their inverses. If $U=\R^n\times M \rightrightarrows M$ is the path-holonomy bi-submersion defined by a choice of local generators $X_1,\dots,X_n$ of $\cF$, then its inverse $U^{-1}$ is obtained by switching the roles of $\bs$ and $\bt$, and the map $U\to U^{-1}, ( {\lambda},y)\to (-\lambda,exp_y(\sum \lambda_i X_i))$
provides an isomorphism of bi-submersions. This proves the first statement of the lemma.

%For some natural number $N$, $W'=W_N\circ \cdots\circ W_1$ where

Assume now that the bi-submersion $W$ is a composition $W=W_N\circ \cdots\circ W_1$ where  
$W_{\alpha}\subset \R^{n}\times M$ is a minimal   bi-submersion ($\alpha=1,\dots,N$).
% and is canonically isomorphic to an open subset to $\R^{k\cdot N}\times \dots,\times\R^{k} \times M$. 
If $b=(b^{N},\dots,b^{1})\colon M_0 \to \R^n\times\cdots \times \R^n$ satisfies $(d_xb)(T_xL)=0$, then the path  defined on $[0,N]$ by
\begin{align}\label{bt}
&b_{t}:=(0,0, \dots,0,0, tb^1) \text{ for } t \in [0,1],\\ 
&b_{t}:=(0,0,\cdots,0,(t-1)b^2,b^1) \text{ for } t \in [1,2], \nonumber\\
&\cdots \nonumber\\
&b_{t}:=((t-N+1)b^N,b^{N-1},\cdots, b^2 ,b^1) \text{ for } t \in [N-1,N] \nonumber
\end{align}
consists of maps whose graphs are bisections of $W$. This is seen applying repeatedly  Lemma \ref{epsb}. Notice that when $t=0$ we obtain the zero map, and when $t=N$ we obtain the map $b$.   \end{proof}

\begin{lemma}\label{bisexists}
Let $U$ be any bi-submersion, $p\in U$. Let $S_x$ and $S_y$ be slices  transverse to the foliation at $x:=\bs(p)$ and $y:=\bt(p)$ respectively. Then there exists a bisection of $U$ through $p$ whose corresponding diffeomorphism maps $S_x$ into $S_y$.
\end{lemma}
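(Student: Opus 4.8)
The plan is to realize the desired bisection as the graph of a section $b$ of $\bs$ defined near $x:=\bs(p)$, with $b(x)=p$, such that the carried diffeomorphism $\bt\circ b$ is a local diffeomorphism of $M$ sending $S_x$ into $S_y$. Writing $y:=\bt(p)$, this amounts to asking that $b(S_x)\subseteq \bt^{-1}(S_y)$ while $\bt\circ b$ remains a local diffeomorphism. Throughout I abbreviate $K_\bs:=\ker(d_p\bs)$ and $K_\bt:=\ker(d_p\bt)$, and recall from the proof of Prop. \ref{globallinaction} that $d_p\bt(K_\bs)=F_y$ and, symmetrically, $d_p\bs(K_\bt)=F_x$.

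First I would consider $\Sigma:=\bt^{-1}(S_y)$, which is a submanifold of $U$ near $p$ because $\bt$ is a submersion and $S_y$ is transverse to the leaf. The first key point is that $\bs|_\Sigma\colon\Sigma\to M$ is a submersion near $p$. Indeed, given $v\in T_xM$, choose $w_0\in T_pU$ with $d_p\bs(w_0)=v$; decompose $d_p\bt(w_0)=f+s$ with $f\in F_y$ and $s\in T_yS_y$ (using $T_yM=F_y\oplus T_yS_y$); and, since $d_p\bt(K_\bs)=F_y$, pick $\kappa\in K_\bs$ with $d_p\bt(\kappa)=f$. Then $w:=w_0-\kappa\in T_p\Sigma$ and $d_p\bs(w)=v$. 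Hence $\bs|_\Sigma$ admits a section $\sigma$ through $p$, and $\beta:=\sigma|_{S_x}$ is a section of $\bs$ over $S_x$ with $\beta(x)=p$ and $\bt(\beta(S_x))\subseteq S_y$.

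It then remains to arrange that $\bt\circ\beta\colon S_x\to S_y$ be a local diffeomorphism, and this is controlled by the linear subspace $E:=(d_p\bs)^{-1}(T_xS_x)\cap(d_p\bt)^{-1}(T_yS_y)\subseteq T_pU$. The crucial linear-algebraic observation is that both $d_p\bs$ and $d_p\bt$ restrict on $E$ to surjections onto $T_xS_x$, resp. $T_yS_y$, and that both restrictions have the \emph{same} kernel $K_\bs\cap K_\bt$; this uses $F_x\cap T_xS_x=0=F_y\cap T_yS_y$ together with $d_p\bt(K_\bs)=F_y$ and $d_p\bs(K_\bt)=F_x$. Consequently, if I choose the section $\sigma$ so that $T_p(\mathrm{image}\,\beta)$ is a complement of $K_\bs\cap K_\bt$ inside $E$, then $d_x(\bt\circ\beta)=d_p\bt|_{T_p(\mathrm{image}\,\beta)}$ is an isomorphism onto $T_yS_y$, so $\bt\circ\beta$ is a local diffeomorphism into $S_y$.

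Finally I would extend $\beta$ to a section $b$ of $\bs$ on a full neighborhood of $x$, keeping $b|_{S_x}=\beta$ and prescribing $d_xb$ in the leaf directions $F_x$ by a lift $\widehat{W}\subseteq T_pU$ of $F_x$ with $d_p\bt(\widehat{W})$ complementary to $T_yS_y$ in $T_yM$ — possible because $d_p\bt$ is surjective (indeed $d_p\bt(K_\bs)=F_y$ supplies exactly the missing $F_y$-directions). Then $d_x(\bt\circ b)$ is an isomorphism $T_xM\to T_yM$, so the graph of $b$ is a bisection of $U$ through $p$ whose carried diffeomorphism maps $S_x$ into $S_y$, as desired. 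I expect the linear-algebraic heart of the argument — the submersivity of $\bs|_{\bt^{-1}(S_y)}$ at $p$ and the coincidence of the two kernels on $E$ — to be the main obstacle, since it is precisely here that the defining properties of a bi-submersion and the transversality of the slices must be combined.
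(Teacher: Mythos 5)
Your argument is correct and is essentially the paper's own proof in slightly different packaging: the paper obtains the submanifold $C=\bs^{-1}(S_x)\cap\bt^{-1}(S_y)$ at once from transversality of $(\bs,\bt)$ to $S_x\times S_y$, whereas you first form $\bt^{-1}(S_y)$ and then restrict over $S_x$; your subspace $E$ is exactly $T_pC$, and the key linear-algebra step (the common kernel $\ker d_p\bs\cap\ker d_p\bt$, obtained from $d_p\bt(\ker d_p\bs)=F_y$, $d_p\bs(\ker d_p\bt)=F_x$ and the transversality of the slices) is identical. The concluding extension of the partial section over $S_x$ to a genuine bisection through $p$ likewise matches the paper's final step.
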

\begin{proof}
\emph{{Claim}: The map $(\bs,\bt) \colon U \to M \times M$ is transverse at $p$
to the submanifold $S_x \times S_y$.}

We have to show that $Im(d_p(\bs,\bt))+(T_xS_x \times T_yS_y)=T_{x,y}(M\times M)$.
This follows from the fact that $(T_xL\times T_yL)\subset Im(d_p(\bs,\bt))$, which in turn holds for the following reason: Given $X\in T_xL, Y\in T_yL$, since $d_p\bt(Ker(d_p\bs))=T_yL$ by \cite[Def. 2.1 ii)]{AndrSk},
 there exists $Z'\in Ker(d_p\bs)$ with $d_p \bt(Z')=Y$. Similarly,  there exists $Z''\in Ker(d_p\bt)$ with $d_p\bs(Z'')=X$. Hence $Z'+Z''$ maps to $(X,Y)$ under $d_p(\bs,\bt)$. 
 
From the above claim it follows that, shrinking the transversals if necessary and working in a neighborhood of $p$, $C:=\bs^{-1}(S_x)\cap \bt^{-1}(S_y)$ is a submanifold of $U$ of codimension $2dim(L)$.
\bigbreak
\emph{{Claim}: Denote by ${\sigma} \colon C  \to S_x$ the restriction of $\bs$. Then $d_p\sigma$ is surjective with kernel $Ker(d_p \bs) \cap Ker(d_p \bt)$}.

Let $Z\in ker (d_p\sigma)$. In particular $Z \in ker (d_p \bs)$, so  $d_p \bt(Z)\in T_yL$. Since at the same time $d_p \bt(Z)\in T_yS_y$ we conclude that $Z\in ker (d_p \bt)$. So $ker (d_p\sigma)\subset Ker(d_p \bs) \cap Ker(d_p \bt)$. The latter is the kernel of the surjective linear map
  $d_p\bt \colon Ker(d_p \bs) \to T_yL$, hence it has dimension $dim(U)-dim(M) -dim(L)$.
Hence $$dim(Im(d_p\sigma))=dim(C)-dim(ker(d_p\sigma))\ge dim(M)-dim(L)=dim(S_x),$$ proving both statements of  the claim. 

The last claim has several consequences. As $\sigma$ is a submersion near $p$,  we can find a $\beta \colon S_x \to \bs^{-1}(S_x)\cap \bt^{-1}(S_y)$ with $\sigma \circ \beta=Id_{S_x}$. 
%Furthermore, it is clear that both statements of the above claim hold also for the restriction of $\bt$ (instead of $\bs$). 
Further, the intersection of $Ker(d_p \bt)$ with $T_pC$ is $Ker(d_p \bs) \cap Ker(d_p \bt)$. As the image of $\beta$ is transversal to the latter, it follows that  $\bt \circ\beta \colon S_x \to S_y$ is a diffeomorphism onto its image. In particular, we can find an 
 extension of $\beta$ to a section $b$ of $\bs$ such that $T_p\text({image}(b))$  has trivial intersection with $Ker(d_p \bt)$, so $b$ will be a bisection of $U$.
\end{proof}

\begin{lemma}\label{vfinS}
Let $x\in (M,\cF)$, $S_x$ a slice at $x$ transverse to the foliation, and $\{Z_t\}_{t\in[0,1]}$ a time-dependent vector field on $M$ lying in  $I_x\cF$ whose time-1 flow $\psi$ satisfies $\psi(S_x)\subset S_x$. Then there exits a 
time-dependent vector field on $S_x$, lying in $I_x\cF_{S_x}$, whose time-1 flow equals $\psi|_{S_x}$.
\end{lemma}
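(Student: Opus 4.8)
The plan is to work in the local model of the splitting theorem (Prop.~\ref{thm:splitting}) and to produce the required vector field as the infinitesimal generator of the \emph{projection onto the slice} of the flow of $\{Z_t\}$. First I would fix a neighborhood $W$ of $x$ and a diffeomorphism $W\cong I^k\times S_x$ as in Prop.~\ref{thm:splitting}, so that $\cF_W$ is generated by the coordinate fields $\partial_{s_1},\dots,\partial_{s_k}$ on $I^k$ together with the trivial extensions $\tilde X_1,\dots,\tilde X_l$ of generators of $\cF_{S_x}$; here $S_x$ is identified with $\{0\}\times S_x$ and $x$ with $(0,x)$. Since $\{Z_t\}$ lies in $I_x\cF$, on $W$ it can be written as $Z_t=\sum_j a^t_j\,\partial_{s_j}+\sum_i b^t_i\,\tilde X_i$ with coefficients $a^t_j,b^t_i\in I_x$, smooth in $t$. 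Denoting by $\psi_t$ the (locally defined, $x$-fixing) flow of $\{Z_t\}$ and writing $\psi_t(0,q)=(\sigma_t(q),\rho_t(q))\in I^k\times S_x$, I would set
$$g_t:=\rho_t=\Pi\circ\psi_t|_{S_x}\colon S_x\to S_x,$$
where $\Pi\colon I^k\times S_x\to S_x$ is the projection. Then $g_0=Id$, and since the hypothesis $\psi(S_x)\subset S_x$ forces $\sigma_1\equiv 0$, we get $g_1=\psi|_{S_x}$.

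The key computation is that, wherever $g_t$ is invertible, its generating vector field lies in $I_x\cF_{S_x}$. Indeed, as $\partial_{s_j}$ is $\Pi$-vertical while $\tilde X_i$ is $\Pi$-related to $X_i$, differentiating yields $\dot g_t(q)=\sum_i b^t_i(\sigma_t(q),\rho_t(q))\,X_i(\rho_t(q))$, so the generator $V_t:=\dot g_t\circ g_t^{-1}$ is $V_t=\sum_i c^t_i\,X_i$ with $c^t_i(q')=b^t_i\big(\sigma_t(g_t^{-1}(q')),q'\big)$. These $c^t_i$ are smooth in $(t,q')$, and because $x$ is fixed by the flow (so $g_t^{-1}(x)=x$ and $\sigma_t(x)=0$) we obtain $c^t_i(x)=b^t_i(0,x)=0$; hence $V_t=\sum_i c^t_i X_i\in I_x\cF_{S_x}$, and its time-one flow is $g_1=\psi|_{S_x}$, which is exactly the claim. (The very same argument with $I_x$ replaced by $\cF(x)$ throughout gives the variant invoked in the proof of Prop.~\ref{holwelldef}: there the $c^t_i$ need not lie in $I_x$, but $V_t(x)=0$ still holds since each $X_i$ vanishes at $x$, so $V_t\in\cF_{S_x}(x)=\cF_{S_x}$.)

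The main obstacle is to guarantee that $g_t$ is a diffeomorphism onto an open subset of $S_x$ for \emph{every} $t\in[0,1]$, not merely near the endpoints; equivalently, that the transversal $\psi_t(S_x)$ is a graph over $S_x$ throughout. Here I would use that each $\psi_t$, being a flow of vector fields in $\cF$, preserves $\cF$ and fixes $x$, so $d_x\psi_t$ preserves $F_x$, which under the splitting is precisely the tangent space to the $\Pi$-fibre through $x$. Consequently $d_x\psi_t$ maps the complement $T_xS_x$ of $F_x$ to another complement of $F_x$, which means $d_xg_t$ is invertible for all $t$. Invertibility of $d_qg_t$ is an open condition in $(t,q)$ holding on the compact set $[0,1]\times\{x\}$, so after shrinking $S_x$ about $x$ it holds on all of $[0,1]\times S_x$; shrinking $S_x$ further and using compactness of $[0,1]$ to keep each $\psi_t(S_x)$ inside a small flow-box around $x$ (where transversality to the $\Pi$-fibres forces a single intersection per fibre) upgrades the local diffeomorphisms $g_t$ to genuine diffeomorphisms onto their images. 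Establishing this uniform-in-$t$ transversality and injectivity is the only delicate point; once it is in place the previous paragraph completes the proof inside the splitting chart.
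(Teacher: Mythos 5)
Your proposal is correct and follows essentially the same route as the paper: both work in the splitting chart $W\cong I^k\times S_x$, define the isotopy $\phi_t:=pr\circ\psi_t|_{S_x}$ of $S_x$, verify it consists of (local) diffeomorphisms via the behaviour of $d_x\psi_t$ on $T_xL$, and compute its generator by pushing $Z_t=\sum f_i^t X_i$ forward under $pr$, observing that the leafwise generators are killed and the transverse coefficients still vanish at $x$. Your treatment of the uniform-in-$t$ invertibility of $\phi_t$ is somewhat more explicit than the paper's, but it is the same argument.
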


\begin{proof}

Choose generators of $\cF$ in a neighborhood $W$ of $x$ as in Prop. \ref{thm:splitting} (the splitting theorem), that is: the first elements $X_1,\dots,X_{k}$ restrict to  coordinate vector fields on $L$ and commute with all $X_i$'s, and the remaining  
%$\ell:=n-k$ 
elements $X_{k+1},\dots,X_n$ restrict to generators of $\cF_{S_x}$. 
Denote $pr \colon W\cong I^k\times S_x\to S_x$ the projection onto the second factor, where the isomorphism is given as in Prop. \ref{thm:splitting}. Denote by $\psi_t$ the time-$t$ flow of $\{Z_t\}_{t\in[0,1]}$.
 
The restriction $pr|_{\psi_t(S_x)}\colon \psi_t(S_x)\to S_x$ is a diffeomorphism onto its image:  the diffeomorphism $\psi$ fixes $x$ and preserves the leaf $L$, so its derivative preserves the tangent space to the leaf $T_xL$, and therefore 
%since Lemma \ref{ideslices} implies that 
$d_x\psi(T_xS_x)=T_x(\psi_t(S_x))$ has trivial intersection with $T_xL=ker(d_x pr)$. We define $$\phi_t:=pr\circ \psi_t|_{S_x} \colon S_x\to S_x,$$
which is also a diffeomorphism onto its image. Notice that $\phi_0=Id_{S_x}$ and $\phi_1=\psi|_{S_x}$. Consider the 
time-dependent vector field $\{Y_t\}_{t\in[0,1]}$ on $S_x$ whose flow is $\{\phi_t\}_{t\in[0,1]}$. We just have to show that, for every fixed  $t$, $Y_t\in I_x\cF_{S_x}$.

To this aim we fix $p\in S_x$ and compute
$$Y_t(\phi_t(p))=\frac{d}{dt}(\phi_t(p))=pr_*\frac{d}{dt}(\psi_t(p))=pr_*(Z_t(\psi_t(p))).$$
Writing $Z_t=\sum_{i=1}^n f_i^t\cdot X_i$, with $f_i^t\in C^{\infty}(W)$ vanishing at the point $x$, we see that
$$pr_*(Z_t(\psi_t(p)))=\left[\sum_{i=k+1}^n 
(pr|_{\psi_t(S_x)})^{-1})^*
f_i^t\cdot X_i\right](\phi_t(p)).$$
(Here we used that $pr_*(X_i)$ equals zero for $i\le k$ and that it equals ${X_i}|_{S_x}$ for $i> k$.) As $p$ is arbitrary, we conclude that the vector field $Y_t$ equals the expression in the square bracket. The latter clearly lies in $I_x\cF_{S_x}$, for ${X_i}|_{S_x}\in \cF_{S_x}$  and 
$f_i^t$ vanishes at $pr(x)=x$.
\end{proof}

\begin{proof}[\un{Proof of Theorem \ref{globalaction}}]
We  show that the map $\Phi_x^y$ is well-defined, i.e. that it does not depend on the choices of bi-submersion $U$ and of map $\bar{b} \colon S_x \to U$.

Fix a point $h$ of $H$, let $x:=\bs(h),y:=\bt(h)$. Let $U$ be a bi-submersion in the path-holonomy atlas, let $u\in U$ such that $[u]=h$,
and $\bar{b} \colon S_x \to U$ a section of $\bs$ through $u$ such that $\bt \circ \bar{b}$ maps  $S_x$ to $S_y$ (it exists as a consequence of Lemma \ref{bisexists}). We may assume that $U$ is a composition of $N$ path-holonomy bi-submersions (see Lemma \ref{epsbmany}).  
Take $\bar{b}$ and extend it to a bisection $b$ of $U$ such that 
$(d_xb)(T_xL)=\{0\}\times T_xL$, where $L$ denotes the leaf through $x$. By Lemma  \ref{epsbmany}, the bisection $b$ can be deformed canonically to the zero-bisection by a path of bisections $b_t$.

Similarly, choose another bi-submersion $U'$ in the path-holonomy atlas, a point $u'$ with $[u']=h$ and 
 $\bar{b}'\colon S_x \to U'$ a section  of $\bs'$ through $u'$ such that $\bt' \circ \bar{b}'$ maps  $S_x$ to $S_y$. 
%a bisection ${b}_0'$ through $u'$ whose diffeomorphisms maps $x$ to $y$ and $S_x$ to $S_y$. 
Since $u$ and $u'$ represent the same point $h\in H$, by definition there exists a morphism of bi-submersions $\sigma \colon U' \to U$ with $u' \mapsto u$. Then $\sigma \circ  \bar{b}'\colon S_x \to U$ is a section of $\bs$ through $u$ carrying the same diffeomorphism as $\bar{b}'$.
Take ${\sigma \circ  \bar{b}'}$ and extend it to a bisection $b'$ of $U$, defined in some neighborhood $M_0$ of $x$,  such that 
$(d_xb')(T_xL)=\{0\}\times T_xL$.  By Lemma  \ref{epsbmany}, $b'$
is a bisection of $U$ which can be deformed canonically to the zero-bisection by a path of bisections  $b'_t$. 
%Further, since  $b|_{S_x}=b'|_{S_x}$,
 %

Our aim is to compare the diffeomorphisms $S_x \to S_y$ induced by $\bar{b}=b|_{S_x}$ and by $\bar{b}'=b'|_{S_x}$. Denote by $\phi_{t}$  (resp. $\phi'_{t}$) the local diffeomorphisms of $M$ carried by 
$b_t$ (resp $b'_t$), for $t\in [0,N]$. 

\emph{{Claim}: $\{\phi_t^{-1}\circ \phi'_t\}_{t\in[0,N]}$ is the   flow of a time dependent vector field that lies in $I_{x}\cF$.}

To prove the claim we proceed as follows. Recall from Lemma \ref{epsbmany} that $U$ is a product of path-holonomy bi-submersions: $U=W^{N}\circ\cdots\circ W^1$. Define $\{Z_t\}$ to be the time-dependent vector field corresponding to the 1-parameter family of diffeomorphisms $\{\phi_t\}$. Fix $t\in [0,N]$, and $\alpha\in\{1,\dots,N\}$ so that  $t\in [\alpha-1,\alpha]$. To write down explicitly $Z_t$ , denote by $\{X^{\alpha}_i\}_{i\le n}$ the  vector fields in $\cF$ that give rise to $W^{\alpha}$. Using eq. \eqref{bt}, we have for all $z\in M_0$:
$$Z_t(\phi_t(z))=\frac{d}{dt}\phi_t(z)=\sum_i b_i^{\alpha}(z) \cdot X_i^{\alpha}(\phi_t(z))  
,$$ or equivalently\footnote{If $b_t$ was not a bisection,  $\phi_t$ would not be invertible and we could not define the vector field $Z_t$.}
\begin{equation}\label{Zt}
Z_t=\sum_i((\phi_t^{-1})^*b_i^{\alpha})\cdot X_i^{\alpha}.
\end{equation} 

%Similarly denote by   $\phi'_{t}$ the diffeomorphism carried by  $t b'$, for $t\in [0,N]$. 
Similarly, the  time-dependent vector field corresponding to $\phi'_t$ is
$Z'_t= \sum_i(({\phi'}_t^{-1})^*{b'_i}^{\alpha})\cdot X_i^\alpha$.
Now eq. \eqref{Posieq}, which gives a relation between the flows of 
 any two time-dependent vector fields,  implies  
 % $\Phi_t$ hold \cite[eq. (2)]{Posi1988}:
%\begin{equation}
%\Phi_t(V+W)=\Phi_t(V)\circ \Phi_t[(\Phi_t(V))^{-1}_*W_t]
%\end{equation}
%where the argument in the square bracket is the time-dependent vector field obtain pushing forward $W_t$ via $(\Phi_t(V))^{-1}$.
%With $V_t:=Z_t, W_t:=Z'_t-Z_t$ the above formula \eqref{Posieq} reads
\begin{equation}\label{sortaBCHapp}
\phi'_t=\phi_t\circ (\text{time-}t \text{ flow of }\{(\phi_s)^{-1}_*(Z'_s-Z_s)\}_{s\in [0,N]}).
\end{equation}
By eq. \eqref{Zt}, for all $s \in [\beta-1,\beta]$ where $\beta\in\{1,\dots,N\}$, we have 
$Z'_s-Z_s=\sum_ig_{i,s}X_i^{\beta}$
where the functions $g_{i,s}$ (defined in a neighborhood of $\phi_s(x)$) are given by
$$g_{i,s}:= {(\phi_s^{-1})^*b_i^{\beta}- ({\phi'}_s^{-1})}^*{b'}_i^{\beta}.$$
Notice that $g_{i,s}$ vanishes at $\phi_s(x)$ as a consequence of the fact that
$b(x)=b'(x)=u$ and
 $\phi_s(x)=\phi'_s(x)$. Equivalently, $\phi_s^* g_{i,s}\in I_x$. Hence
$$(\phi_s)^{-1}_*(Z'_s-Z_s)=\sum_i(\phi_s^* g_{i,s})\cdot(\phi_s)^{-1}_*X_i^\beta\in I_{x}\cF.$$
From eq. \eqref{sortaBCHapp} it follows that $\phi_t^{-1}\circ \phi'_t$ is the time-$t$ flow of a time dependent vector field that lies in $I_{x}\cF$, proving the claim. 
%Now take $t=N$. 

Finally, since $\phi_N(S_x)=(\bt\circ b)(S_x)\subset S_y$ and similarly for $\phi_N'$, we can apply Lemma \ref{vfinS} to conclude the proof.
\end{proof}

For the proof of Prop. \ref{globalactionong} we need one more lemma:

\begin{lemma}\label{ypsiy}
Let $x\in (M,\cF)$, $Y\in \cF(x)$ and $\psi\in exp(I_x\cF)$.
Then $Y - \psi_*Y\in I_x \cF$.
\end{lemma}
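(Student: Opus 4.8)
The plan is to reduce the statement to the assertion that pullback by an element of $exp(I_x\cF)$ acts as the identity on the isotropy Lie algebra $\g_x=\cF(x)/I_x\cF$, and to prove that by an infinitesimal computation. Since $\psi_*Y=(\psi^{-1})^*Y$ and the inverse of a time-one flow of a time-dependent vector field in $I_x\cF$ is again such a flow (run the reversed field $\{-W_{1-t}\}$, which still lies in $I_x\cF$), we have $\psi^{-1}\in exp(I_x\cF)$. Hence it suffices to prove the general claim: for every $\chi\in exp(I_x\cF)$ and $Y\in\cF(x)$ one has $\chi^*Y-Y\in I_x\cF$; applying this to $\chi=\psi^{-1}$ yields $\psi_*Y-Y\in I_x\cF$, and therefore $Y-\psi_*Y\in I_x\cF$. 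To prove the claim, write $\chi=\phi_1$, the time-one flow of $\{Z_t\}_{t\in[0,1]}$ with each $Z_t\in I_x\cF$, and use the standard identity $\frac{d}{dt}(\phi_t^*Y)=\phi_t^*[Z_t,Y]$, so that $\chi^*Y-Y=\int_0^1\phi_t^*[Z_t,Y]\,dt$.

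The heart of the argument is that $[Z_t,Y]\in I_x\cF$ for every $t$. Choosing generators $X_1,\dots,X_n$ of $\cF$ near $x$ and writing $Z_t=\sum_i f_i^t X_i$ with $f_i^t\in I_x$, the Leibniz rule gives $[Z_t,Y]=\sum_i\bigl(f_i^t[X_i,Y]-(Yf_i^t)X_i\bigr)$. Here each $[X_i,Y]\in\cF$ because $\cF$ is closed under the bracket, so $f_i^t[X_i,Y]\in I_x\cF$; and $(Yf_i^t)(x)=Y_x(f_i^t)=0$ precisely because $Y_x=0$ (this is the only place the hypothesis $Y\in\cF(x)$ enters), so $Yf_i^t\in I_x$ and $(Yf_i^t)X_i\in I_x\cF$. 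Next, $\phi_t$ fixes $x$ (each $Z_t$ vanishes at $x$) and preserves $\cF$ (it is the flow of vector fields in $\cF$), hence $\phi_t^*$ maps $I_x$ into $I_x$ and $\cF$ onto $\cF$; since $\phi_t^*(fV)=(\phi_t^*f)(\phi_t^*V)$, it preserves the submodule $I_x\cF$. Consequently $\phi_t^*[Z_t,Y]\in I_x\cF$ for all $t$.

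Conceptually this says that the curve $c(t):=[\phi_t^*Y]$ in the finite-dimensional space $\g_x=\cF(x)/I_x\cF$ has vanishing derivative: reading off coordinates on $\g_x$ from the classes $[X_i]$, one gets $\dot c(t)=[\phi_t^*[Z_t,Y]]=0$, so $c$ is constant and $c(1)=c(0)$, i.e. $\chi^*Y-Y\in I_x\cF$. Turning this heuristic into a proof amounts to showing that the integrated field $\int_0^1\phi_t^*[Z_t,Y]\,dt$ again lies in $I_x\cF$, equivalently that passing to the class in $\g_x$ commutes with $\frac{d}{dt}$.

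This last passage is the main obstacle, and I would handle it by producing, on a neighbourhood of $x$, a presentation $\phi_t^*[Z_t,Y]=\sum_k g_k^t X_k$ with coefficients $g_k^t\in I_x$ depending smoothly on $t$; then $\int_0^1\phi_t^*[Z_t,Y]\,dt=\sum_k\bigl(\int_0^1 g_k^t\,dt\bigr)X_k$, and each $\int_0^1 g_k^t\,dt$ vanishes at $x$, hence lies in $I_x$, giving the claim. Such a presentation follows from the explicit formula above, $\phi_t^*[Z_t,Y]=\sum_j(\phi_t^*a_j^t)(\phi_t^*X_j)$ with $a_j^t\in I_x$ smooth, together with a jointly smooth expansion $\phi_t^*X_j=\sum_k b_{jk}^t X_k$; then $g_k^t=\sum_j(\phi_t^*a_j^t)b_{jk}^t\in I_x$. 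The only genuinely delicate point is the smooth-in-$t$ choice of the coefficients $b_{jk}^t$ expressing the $\phi_t^*X_j$ in the fixed generators, which I expect to obtain from local finite generation of $\cF$ and a Hadamard-type argument for smooth dependence on parameters; everything else reduces to the bracket computation and the invariance of $I_x\cF$ under $\phi_t^*$.
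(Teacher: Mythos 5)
Your proposal is correct and follows essentially the same route as the paper's proof: the identity $Y-\psi_*Y=\psi_*\bigl[\int_0^1(\psi_t^{-1})_*[X_t,Y]\,dt\bigr]$, the Leibniz-rule computation showing $[X_t,Y]\in I_x\cF$ from $Y_x=0$, and the invariance of $I_x\cF$ under the flow because $\psi_t(x)=x$ and the flow preserves $\cF$. The only difference is cosmetic (you phrase it via $\psi^{-1}$ and pullbacks) plus the extra care you take in justifying that the integral stays in $I_x\cF$ by integrating coefficients in a fixed finite presentation --- a point the paper's proof leaves implicit.
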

\begin{proof}   For any  time-dependent vector field $X$ whose time $1$ flow is $\psi$,%By eq.\eqref{ypsiy} and eq. \eqref{ypsiy2} we havebBy the fundamental theorem of calculus 
%applied to $T_xM$-valued functions 
we have
\begin{equation}\label{eq:ypsiy}
 Y-\psi_*Y=\psi_*(\psi^{-1}_*Y-Y)=\psi_*\left[\int_0^1 \frac{d}{dt} ((\psi^{-1}_t)_*Y) dt\right]=\psi_*\left[\int_0^1 (\psi^{-1}_t)_*\left([X_t,Y]\right) dt\right],
\end{equation}
where $\psi_t$ is the time-$t$ flow of $\{X_t\}$.
%Now 
%\begin{equation}\label{ypsiy2}
%\frac{d}{dt} ((\psi^{-1}_t)_*Y)=(\psi^{-1}_t)_*\left([X_t,Y]\right),
%\end{equation}
The last equation holds because the integrands are equal,  
see for example \cite{Lee}. 
%\begin{equation*}  Y-\psi_*Y= \psi_*\left[\int_0^1 (\psi^{-1}_t)_*\left([X_t,Y]\right) dt\right].
%\end{equation*}

 Since $\psi \in exp(I_x \cF)$, we can choose $X$ so that $X_t\in I_x \cF$ for all $t$.   
From this and $Y(x)=0$, using the Leibniz rule one shows that $[X_t,Y]\in I_x\cF$. Since $\psi_t(x)=x$ for all $t$, we are done. 
%, it follows that also 
%\begin{equation}\label{git}
%\psi_*(\psi^{-1}_t)_*\left([X_t,Y]\right)=\sum_i g_{t,i}Z_i\in I_x \cF
%\end{equation}
%where $ g_{t,i}\in I_x$ and the $Z_i$ are generators of $\cF$ near $x$. Finally, integrating \eqref{git} from $0$ to $1$, we again obtain an element of $I_x \cF$. By eq. \eqref{ypsiy} this concludes the proof.
 \end{proof}

\begin{proof}[\un{Proof of Prop. \ref{globalactionong}}] Fix $h\in H_x^y$ and slices $S_x$ and $S_y$.
Any  diffeomorphism $\tau$ chosen as in Theorem \ref{globalaction} maps $x$ to $y$ and maps the foliation $\cF_{S_x}$ to $\cF_{S_y}$, hence gives a  map 
 $\g_x\to\g_y$ as in eq. \eqref{gxgymap}.
 
First we show that the map  \eqref{gxgymap} is independent of the choice of  the diffeomorphism $\tau$.  
Let $\tau'$ be another  diffeomorphism
 associated to $h$ as in Theorem \ref{globalaction}.
 Given $Y\in \cF_{S_x}$, we have to show that $\tau_*Y-\tau'_*Y \in I_y\cF_{S_y}$.  This goes as follows: we have $$\tau_*Y-\tau'_*Y=\tau_*(Y - \psi_*Y)$$
where $\psi:=\tau_*^{-1}\circ \tau'_* \in exp(I_x \cF_{S_x})$ by    Theorem \ref{globalaction}. Hence Lemma \ref{ypsiy}, applied to the foliation $(S_x,\cF_{S_x})$, implies that 
$Y - \psi_*Y\in I_x \cF_{S_x}$,  and with $\tau_*(I_x \cF_{S_x})= I_y\cF_{S_y}$ we are done.

Last, we show that the map \eqref{gxgymap} is independent of the choice of slices, and hence canonical. Denote $S^1_x:=S_x$ and let  $S^2_x$ be
another slice at $x$. Fix  $\psi^{12}_x \in exp(I_x\cF)$ mapping $S^2_x $ to $S^1_x$. Notice that the isomorphism 
\begin{equation}\label{S21}
\cF_{S^2_x}/I_x\cF_{S^2_x}\to \cF_{S^1_x}/I_x\cF_{S^1_x}
\end{equation}
induced by   $(\psi^{12}_x)_*$ (or, more precisely, by its restriction 
to the slice $S_x^2$) coincides with the one
obtained by the canonical identification of both sides of eq. \eqref{S21}  with $\g_x$. This is a consequence of the fact that the automorphism of $\g_x$ induced by $(\psi^{12}_x)_*$ is $Id_{\g_x}$, by Lemma \ref{ypsiy}. Similarly, denote $S^1_y:=S_y$ and  let $S^2_y$ be
another slice at $y$, and $\psi^{21}_x \in exp(I_y\cF)$ mapping $S^1_y $ to $S^2_y$. We have to show that the diagram of isomorphisms
\begin{equation*}
\xymatrix{
 \cF_{S^2_x}/I_x\cF_{S^2_x}\ar[d]\ar[r] & \cF_{S^2_y}/I_y\cF_{S^2_y} 
 \\
\cF_{S^1_x}/I_x\cF_{S^1_x} \ar[r] & \cF_{S^1_y}/I_y\cF_{S^1_y} \ar[u]\\
}
\end{equation*}
commutes, where the horizontal maps are given by \eqref{gxgymap} (applied to the two choices of slices)
and the vertical maps are induced by $(\psi^{12}_x)_*$ and $(\psi^{21}_y)_*$ respectively, as in eq. \eqref{S21}. 
We have $$\Phi_x^y (h) =\langle \tau \rangle \in  GermAut_{\cF}(S_x^1,S_y^1)/exp(I_x \cF_{S^1_x}),$$
hence by Lemma \ref{immaterial} we have
$$\Phi_x^y (h) =\langle \psi^{21}_y \circ\tau \circ \psi^{12}_x \rangle \in  GermAut_{\cF}(S_x^2,S_y^2)/exp(I_x \cF_{S^2_x}),$$ 
showing that the above diagram commutes.  \end{proof}

\subsection{Changing transversals}\label{subsec:chan}

We put here some technical results regarding different choices of transversals which are used  in \S \ref{section:geomhol}.

\begin{lemma}\label{s1s2} Let $x$ be a point in a foliated manifold $(M,\cF)$, 
and $S^1_x,S^2_x$ two transversals at $x$. Then there exists $\psi \in exp(I_x\cF)$ mapping $S^1_x $ to $S^2_x$.
\end{lemma}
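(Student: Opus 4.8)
The plan is to use the splitting theorem (Prop.~\ref{thm:splitting}) to put the foliation in normal form near $x$, and then to realise the passage from $S^1_x$ to $S^2_x$ as a flow along the leaf directions with coefficients vanishing at $x$. Concretely, I would first apply Prop.~\ref{thm:splitting} using the transversal $S^1_x$, obtaining a neighbourhood $W$ of $x$, a diffeomorphism $W\cong I^k\times S^1_x$, and generators of $\cF_W$ of the form $\partial_{s_1},\dots,\partial_{s_k}$ (the leaf directions) together with the trivial extensions of generators $X_1,\dots,X_l$ of $\cF_{S^1_x}$. In these coordinates $S^1_x=\{0\}\times S^1_x$ and the leaf through $x$ is tangent to $\mathrm{span}(\partial_{s_1},\dots,\partial_{s_k})=F_x$.

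Next I would represent $S^2_x$ as a graph over $S^1_x$. Writing $\pr\colon W\cong I^k\times S^1_x\to S^1_x$ for the projection onto the second factor, the kernel of $d_x\pr$ is $T_xL=F_x$; since $T_xS^2_x$ is a complement to $F_x$ in $T_xM$, the restriction $\pr|_{S^2_x}$ is a local diffeomorphism near $x$. Hence, after shrinking $W$, there is a smooth map $g=(g_1,\dots,g_k)\colon S^1_x\to I^k$ with $g(x)=0$ such that $S^2_x=\{(g(p),p):p\in S^1_x\}$.

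Now I would define the vector field $Z:=\sum_{i=1}^k(\pr^*g_i)\,\partial_{s_i}$ on $W$. Since its coefficients depend only on $p$, its flow is the pure $s$-translation $\phi_t(s,p)=(s+t\,g(p),p)$, so $\phi_1$ maps $\{0\}\times S^1_x$ onto $\{(g(p),p)\}=S^2_x$. The vector field $Z$ lies in $I_x\cF$: each $\partial_{s_i}$ is one of the chosen generators of $\cF_W$, and each coefficient $\pr^*g_i$ vanishes at $x$ because $g(x)=0$. Being the flow of a vector field in $\cF$, the diffeomorphism $\psi:=\phi_1$ preserves $\cF$, and it is by construction the time-one flow of an element of $I_x\cF$; thus $\psi\in exp(I_x\cF)$ and $\psi(S^1_x)=S^2_x$, as required.

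I expect the delicate points to be bookkeeping rather than conceptual: ensuring that the graph representation of $S^2_x$ and the flow of $Z$ are defined on one common small enough neighbourhood of $x$ (so that $\phi_1$ does not leave $W$, which is automatic since $g(x)=0$ forces $g$ to be small near $x$), and checking that $Z$ genuinely belongs to the module $I_x\cF$ rather than merely vanishing at $x$. This last point is exactly where it is essential that the $\partial_{s_i}$ are among the generators of $\cF_W$, which is precisely what the splitting theorem supplies.
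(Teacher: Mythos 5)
Your proposal is correct and follows essentially the same route as the paper's own proof: apply the splitting theorem to write $W\cong I^k\times S^1_x$, realise $S^2_x$ as the graph of a map $g\colon S^1_x\to I^k$ vanishing at $x$, and take the time-one flow of $\sum_i(\pr^*g_i)\,\partial_{s_i}\in I_x\cF$. The explicit computation of the flow as the translation $(s,p)\mapsto(s+tg(p),p)$ is a pleasant concrete addition but does not change the argument.
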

\begin{proof}
By the splitting theorem Prop. \ref{thm:splitting} there exists 
a neighborhood $W$ of $x$ in $M$ and a diffeomorphism  of foliated manifolds   that identifies $(W,\cF_W)$ with the
product of the foliated manifolds $(S_x^1, \cF_{S_x^1}) $ and $I ^k$ endowed with the foliation consisting of just one leaf, where $I:=(-1,1)$. We use this identification and denote by $\pi \colon  S_x^1 \times I ^k \to S^1_x$ the natural projection. As $\pi^{-1}(x,0)$ is given by the leaf of $\cF$ through $x$, it is clear that $\pi$ maps $S^2_x$ diffeomorphically onto $S^1_x$. Hence there is a unique   map $\theta \colon S^1_x \to I ^k$ %which is a section of $\pi$ and 
whose graph is $S^2_x$. Denote by $(s_1,\dots,s_k)$ the standard coordinates on $I ^k$, and consider the vector field $\sum_{i=1}^k \pi^* \theta_i \cdot \partial_{s_i}$ on  $S_x^1 \times I ^k$, which  lies in $I_x \cF$ since $\theta(x)=0$. Its time-one flow  takes $S^1_x$ to  $S^2_x$.
\end{proof}

\begin{lemma}\label{ideslices} Let $x,y$ be  points in a foliated manifold $(M,\cF)$ lying in the same leaf and $S^i_x, S^i_y$ transversals at $x$ and $y$ respectively, $i=1,2$. There is an  identification 
\begin{align}\label{identtransversals}
GermAut_{\cF}(S_x^1,S_y^1)/exp(I_x \cF_{S^1_x})  &\to GermAut_{\cF}(S_x^2,S_y^2)/exp(I_x \cF_{S^2_x}) , \quad
[\tau] &\mapsto [\tilde{\psi}_y^{21} \circ \tau \circ \tilde{\psi}^{12}_x]
\end{align}
where  $\psi^{12}_x \in exp(I_x\cF)$ maps $S^2_x $ to $S^1_x$, $\psi^{21}_y \in exp(I_x\cF)$ maps $S^1_y $ to $S^2_y$, and $\tilde{\psi}^{12}_x$ (resp. $\tilde{\psi}_y^{21}$) is the restriction to $S^2_x$ (resp. ${S^1_y}$). 

 Further, this identification is canonical, namely independent of the choice of $\psi^{12}_x$ and $\psi^{21}_y$. 
\end{lemma}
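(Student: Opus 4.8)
The plan is to observe that, before quotienting, the assignment $\tau\mapsto\tilde\psi_y^{21}\circ\tau\circ\tilde\psi_x^{12}$ is simply pre- and post-composition by the fixed germs $\tilde\psi_x^{12}\colon S_x^2\to S_x^1$ and $\tilde\psi_y^{21}\colon S_y^1\to S_y^2$, and is therefore manifestly a bijection $GermAut_{\cF}(S_x^1,S_y^1)\to GermAut_{\cF}(S_x^2,S_y^2)$, with inverse given by composing with $(\tilde\psi_x^{12})^{-1}$ and $(\tilde\psi_y^{21})^{-1}$. Here $\psi_x^{12}\in exp(I_x\cF)$ (mapping $S_x^2$ to $S_x^1$) and $\psi_y^{21}\in exp(I_y\cF)$ (mapping $S_y^1$ to $S_y^2$) exist by Lemma \ref{s1s2}, and $\tilde\psi$ denotes the restriction to the relevant slice. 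All the content of the statement thus reduces to three points: (i) this bijection descends to the quotients, (ii) the descended map is again a bijection, and (iii) it is independent of the choices of $\psi_x^{12}$ and $\psi_y^{21}$.

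For (i) and (ii) the only tools needed are the conjugation identity $\phi\circ exp(Y)\circ\phi^{-1}=exp(\phi_*Y)$ and the fact, already exploited in Cor. \ref{groidmap}, that a foliation-preserving germ diffeomorphism $\phi$ sending $x$ to $y$ carries $I_x\cF_{S}$ onto $I_y\cF_{S'}$. Concretely, I would compute for $\tau,\tau'\in GermAut_{\cF}(S_x^1,S_y^1)$
\[
(\tilde\psi_y^{21}\circ\tau'\circ\tilde\psi_x^{12})^{-1}\circ(\tilde\psi_y^{21}\circ\tau\circ\tilde\psi_x^{12})=(\tilde\psi_x^{12})^{-1}\circ(\tau'^{-1}\circ\tau)\circ\tilde\psi_x^{12},
\]
the two copies of $\tilde\psi_y^{21}$ cancelling. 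Since $\tilde\psi_x^{12}\colon S_x^2\to S_x^1$ is foliation-preserving and fixes $x$, conjugation by it maps $exp(I_x\cF_{S_x^1})$ isomorphically onto $exp(I_x\cF_{S_x^2})$; hence $\tau'^{-1}\circ\tau\in exp(I_x\cF_{S_x^1})$ if and only if the displayed element lies in $exp(I_x\cF_{S_x^2})$. This yields simultaneously well-definedness and injectivity of the descended map, and surjectivity follows by running the same argument on the inverse bijection.

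For (iii) I would take a second pair of choices $\hat\psi_x^{12},\hat\psi_y^{21}$. As $exp(I_x\cF)$ is a group, $(\psi_x^{12})^{-1}\circ\hat\psi_x^{12}\in exp(I_x\cF)$ maps $S_x^2$ to itself, so by Lemma \ref{vfinS} its restriction $\alpha:=\big((\psi_x^{12})^{-1}\circ\hat\psi_x^{12}\big)|_{S_x^2}$ lies in $exp(I_x\cF_{S_x^2})$, whence $\tilde{\hat\psi}_x^{12}=\tilde\psi_x^{12}\circ\alpha$. Symmetrically $\beta:=\big(\hat\psi_y^{21}\circ(\psi_y^{21})^{-1}\big)|_{S_y^2}\in exp(I_y\cF_{S_y^2})$ and $\tilde{\hat\psi}_y^{21}=\beta\circ\tilde\psi_y^{21}$. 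Writing $\phi:=\tilde\psi_y^{21}\circ\tau\circ\tilde\psi_x^{12}$, the hatted map sends $\tau$ to $\beta\circ\phi\circ\alpha$, and $[\beta\circ\phi\circ\alpha]=[\phi]$ because $\phi^{-1}\circ(\beta\circ\phi\circ\alpha)=(\phi^{-1}\circ\beta\circ\phi)\circ\alpha\in exp(I_x\cF_{S_x^2})$, using that $\phi^{-1}\circ\beta\circ\phi\in exp(I_x\cF_{S_x^2})$ by the conjugation property of Cor. \ref{groidmap}. Thus the two maps coincide.

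The computations are elementary once the ambient objects are correctly identified, so I expect the only genuinely delicate point to be the passage between flows on $M$ and flows on a slice: the diffeomorphisms $\psi_x^{12},\psi_y^{21}$ are time-one flows of vector fields in $I_x\cF\subset\vX(M)$, whereas the equivalence relation defining holonomy transformations is phrased via flows of vector fields on the slice, in $I_x\cF_{S}$. Lemma \ref{vfinS} is precisely what bridges this gap, and I would be careful to invoke it each time a global flow is restricted to a slice, as in the definition of $\alpha$ and $\beta$ above.
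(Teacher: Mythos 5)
Your proof is correct and follows essentially the same route as the paper's: both arguments reduce well-definedness and canonicity to the conjugation identity $\phi\circ exp(Y)\circ\phi^{-1}=exp(\phi_*Y)$ together with the fact that pushforward by a foliation-preserving germ sending $x$ to $y$ carries $I_x\cF_{S}$ to $I_y\cF_{S'}$. You are in fact somewhat more careful than the paper on the one delicate point you flag yourself -- the paper's displayed conjugation formula lives in $exp(I_x\cF)$ on $M$ while the equivalence relation is phrased via $exp(I_x\cF_{S^2_x})$ on the slice, and your explicit appeal to Lemma \ref{vfinS} (and to conjugation performed directly on the slices) closes that gap cleanly.
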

\begin{proof}
Make a choice for  $\psi^{12}_x$   and $\psi^{21}_y$ (this is possible by Lemma \ref{s1s2}). To show that the map \eqref{identtransversals} is well-defined, we take $\tau,\hat{\tau} \in GermAut_{\cF}(S_x^1,S_y^1)$ with $\hat{\tau}^{-1} \circ \tau \in exp(I_x \cF_{S^1_x})$ and need to check that
$$(\tilde{\psi}_y^{21} \circ \hat{\tau} \circ \tilde{\psi}^{12}_x)^{-1}\circ (\tilde{\psi}_y^{21} \circ \tau \circ \tilde{\psi}^{12}_x)
\in exp(I_x \cF_{S^2_x}).$$
This is done using that for any time-dependent vector field $X$ in $\cF$ and $f\in I_x$, we have
$$(\psi^{12}_x)^{-1} \circ exp(fX) \circ \psi^{12}_x=exp ((\psi^{12}_x)^{-1}_* (fX))\in exp(I_x \cF).$$

Now we show that the map is canonical: Let $\hat{\psi}^{12}_x$   and $\hat{\psi}^{21}_y$ be as above; we have to show that 
$$(\tilde{\psi}_y^{21} \circ \tau \circ \tilde{\psi}^{12}_x)^{-1}\circ (\tilde{\hat{\psi}}_y^{21} \circ \tau \circ  \tilde{\hat{\psi}}^{12}_x)\in exp(I_x \cF_{S^2_x}),$$
which follows from a computation similar to the above.  \end{proof}

\begin{lemma}\label{derid} Let $x$ be a point in a foliated manifold $(M,\cF)$, lying in the leaf $L$.
If $\psi \in exp(I_x\cF)$ then $d_x \psi$ induces the identity on $N_xL:=T_xM/T_xL$.
\end{lemma}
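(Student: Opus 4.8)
The plan is to realize $\psi$ as the time-one flow of a time-dependent vector field $\{Z_t\}_{t\in[0,1]}$ with each $Z_t\in I_x\cF$ (this is the definition of $exp(I_x\cF)$), and then to track the linearization of the flow along the fixed point $x$. First I would observe that, writing $Z_t=\sum_j f_j^t X_j$ locally with $f_j^t\in I_x$ and $X_j\in\cF$ (possible since $\cF$ is locally finitely generated), each $Z_t$ vanishes at $x$; hence $x$ is a fixed point of the whole flow, $\psi_t(x)=x$. Moreover, being the flow of vector fields in $\cF$, each $\psi_t$ preserves the leaf $L$ through $x$. Consequently $A_t:=d_x\psi_t$ is a well-defined endomorphism of $T_xM$ that preserves $T_xL$, and it descends to an endomorphism $\bar A_t$ of $N_xL=T_xM/T_xL$. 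The goal then becomes to show $\bar A_1=\mathrm{Id}$.

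The key computation is the linearization of $Z_t$ at $x$. For a single summand $Z=fX$ with $f\in I_x$ and $X\in\cF$, and any $v=W_x\in T_xM$, I would use the identity $d_xZ(v)=[W,Z]_x$ (valid precisely because $Z$ vanishes at $x$) together with $[W,fX]=W(f)\,X+f[W,X]$, and evaluate at $x$ where $f(x)=0$, to obtain $d_xZ(v)=(df_x(v))\,X_x$. Since $X\in\cF$ forces $X_x\in F_x=T_xL$, this shows that the linearization $L_t:=d_xZ_t$ maps all of $T_xM$ into $T_xL$; equivalently, $L_t$ induces the zero endomorphism $\bar L_t=0$ on $N_xL$.

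Finally I would feed this into the variational (first-order) equation for the linearized flow. Differentiating $\partial_t\psi_t=Z_t\circ\psi_t$ in the base point at $x$ and using $\psi_t(x)=x$ gives $\dot A_t=L_t A_t$ with $A_0=\mathrm{Id}$. Passing to the quotient $N_xL$, where $\bar L_t=0$, yields $\dot{\bar A}_t=0$ with $\bar A_0=\mathrm{Id}$, hence $\bar A_t=\mathrm{Id}$ for all $t$; in particular $\bar A_1=d_x\psi$ induces the identity on $N_xL$. The main technical point to state carefully is the variational equation at the fixed point — namely that the differential of the section $Z_t$ at its zero $x$ is exactly $L_t\in\mathrm{End}(T_xM)$ — together with the compatibility that $A_t$ preserves $T_xL$ so that the quotient flow $\bar A_t$ is defined; both facts are standard but deserve to be spelled out. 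A cleaner alternative avoiding the ODE is to note that the filtration $T_xL\subset T_xM$ is preserved and that $L_t$ is strictly ``lower triangular'' with respect to it, so its contribution to the flow on the associated graded $N_xL$ is trivial; still, the variational argument is the most transparent and is the one I would present.
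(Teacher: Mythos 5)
Your argument is correct and is essentially the paper's proof in different packaging: the decisive input in both is the Leibniz-rule computation that for $f\in I_x$ and $X\in\cF$ one has $[W,fX]_x=df_x(W_x)\,X_x\in T_xL$, i.e.\ the linearization of $Z_t$ at its zero $x$ maps $T_xM$ into $T_xL$. The paper then integrates the identity $Y-\psi_*Y=\psi_*\bigl[\int_0^1(\psi_t^{-1})_*([X_t,Y])\,dt\bigr]$ from the proof of Lemma \ref{ypsiy}, evaluated at the fixed point $x$, whereas you integrate the equivalent variational equation $\dot A_t=L_tA_t$ on the quotient $N_xL$; these are the same first-order-along-the-flow argument viewed from two standard angles.
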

\begin{proof}
 Denote by $\{X_t\}\subset I_x\cF$ the time-dependent vector field whose time-1 flow is $\psi$. As $X_t$ is tangent to $L$ and vanishes at $x$, it is clear that $d_x \psi$ maps $T_xL$ to itself, so it induces an endomorphism of $N_xL$.
 
To show that this endomorphism is the identity we proceed as follows: Let $Y_x \in T_xM$, and extend it to a vector field $Y$ defined near $x$. 
Since $X_t\in I_x\cF$, using the Leibniz rule one shows that $[X_t,Y]|_x\in T_xL$. Hence  $(\psi^{-1}_t)_*\left([X_t,Y]\right)|_x\in T_xL$ for all $t$, so from eq. \eqref{eq:ypsiy} we conclude that $Y_x-\psi_*Y_x \in T_xL$.  
\end{proof}

\subsection{The normal module \texorpdfstring{$\cN$}{Lg}}\label{subsec:normalmodule}

Let $(M,\cF)$ be a manifold with a foliation.
  Here we study the $C^{\infty}(M)$-module $\cN = \vX(M)/\widehat{\cF}$ which is needed in \S \ref{subsection:linholrep} and \S \ref{section:deform}. Notice that $\cN$  is locally finitely generated (since $\vX(M)$ is), and that it
 does not inherit the Lie bracket of $\vX(M)$. 

For every $x\in M$ consider the vector space
$$\cN_{x} := \cN / I_{x}\cN = \vX(M) / (\widehat{\cF} + I_{x}\vX(M)).$$
%depends only on the leaf of $\cF$ through $x$:
\begin{lemma}\label{NxL} 
%Let $\cF_i$, $i = 1,2$ foliations on $M$ which induce the same partition to leaves for $M$. Then for every $x \in M$, $\vX(M)/(\cF_1 + I_x\vX(M))$ is isomorphic to $\vX(M)/(\cF_2 + I_x\vX(M))$. Furthermore, 
Let $x$ belong to the leaf $L$. Then the evaluation map identifies $\cN_{x}$ with $N_{x}L = \frac{T_{x}M}{T_{x}L}$.
\end{lemma}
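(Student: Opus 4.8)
The plan is to exhibit the claimed isomorphism as the map induced by evaluation of vector fields at $x$. Concretely, consider the evaluation map $\operatorname{ev}_x\colon \vX(M)\to T_xM$, $Y\mapsto Y_x$, which is surjective since every tangent vector at $x$ extends to a vector field. Composing with the projection $T_xM\to T_xM/T_xL=N_xL$ yields a surjective linear map $E\colon \vX(M)\to N_xL$. Since $T_xL=F_x$, I would first check that $E$ kills $\cF+I_x\vX(M)$, so that it descends to a surjection $\cN_x=\vX(M)/(\cF+I_x\vX(M))\to N_xL$: indeed $\operatorname{ev}_x(\cF)=F_x$ by the short exact sequence in \S\ref{sec:fol} (item b), and any $Y=\sum f_iZ_i\in I_x\vX(M)$ has $Y_x=\sum f_i(x)(Z_i)_x=0$. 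The whole content of the lemma is then the injectivity of this descended map, i.e.\ the identity
\begin{equation*}
\ker E=\{Y\in\vX(M):Y_x\in F_x\}=\cF+I_x\vX(M).
\end{equation*}

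The inclusion $\supseteq$ is exactly the computation just made. For the reverse inclusion, given $Y$ with $Y_x\in F_x$, I would use the surjectivity $\operatorname{ev}_x(\cF)=F_x$ to pick $X\in\cF$ with $X_x=Y_x$, so that $Y-X$ is a vector field \emph{vanishing at} $x$. This reduces the whole statement to the single claim
\begin{equation*}
\{Z\in\vX(M):Z_x=0\}=I_x\vX(M),
\end{equation*}
after which $Y=X+(Y-X)\in\cF+I_x\vX(M)$ as desired.

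The hard part, and the only genuinely technical point, is establishing this last claim: the inclusion $\supseteq$ is immediate, but $\subseteq$ requires care because $I_x\vX(M)$ is built from \emph{globally} defined functions vanishing at $x$ and \emph{globally} defined vector fields, whereas the natural coordinate expression is only local. I would handle it with a bump-function localization. Choose coordinates $(x_1,\dots,x_n)$ centered at $x$ on a chart $U$, and bump functions $\rho,\sigma$ with $\rho=1$ near $x$, $\sigma=1$ on $\operatorname{supp}\rho$, and $\operatorname{supp}\sigma\subset U$. On $U$ write $Z=\sum_i g_i\,\partial_{x_i}$ with $g_i(x)=0$; then $\rho Z=\sum_i(\rho g_i)(\sigma\partial_{x_i})$, where each $\sigma\partial_{x_i}$ extends by zero to a global vector field and each $\rho g_i\in I_x$, so $\rho Z\in I_x\vX(M)$. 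For the remainder $(1-\rho)Z$, which vanishes on a neighborhood of $x$, I would choose $h\in I_x$ equal to $1$ on $\operatorname{supp}((1-\rho)Z)$, giving $(1-\rho)Z=h\cdot(1-\rho)Z\in I_x\vX(M)$. Adding the two pieces yields $Z\in I_x\vX(M)$, completing the claim and hence the proof of the isomorphism $\cN_x\cong N_xL$.
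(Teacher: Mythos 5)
Your proof is correct and follows essentially the same route as the paper's: surjectivity and well-definedness of the evaluation map are immediate, and injectivity is obtained by picking $Y\in\cF$ with $Y_x=X_x$ (using $ev_x(\cF)=F_x=T_xL$) so that $X-Y$ vanishes at $x$. The only difference is that you additionally spell out, via a bump-function localization, the fact that a vector field vanishing at $x$ lies in $I_x\vX(M)$ --- a step the paper's proof treats as immediate --- and that part of your argument is also correct.
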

\begin{proof}
The map 
$$\vX(M) / (\widehat{\cF} + I_{x}\vX(M)) \to N_xL,\;\;\; \langle X\rangle \mapsto X_x \text{ mod }T_xL$$
is clearly well-defined and surjective. It is injective because if $X\in \vX(M)$ satisfies $X_x\in T_xL$, then there exists $Y\in \widehat{\cF}$ with $Y_x=X_x$, and hence $X-Y\in I_{x}\vX(M)$. 
\end{proof}

The union $N = \cup_{x \in M}\cN_x$ should be considered the normal bundle of the foliation $\cF$. It is a field of vector spaces of varying dimensions over $M$. 
 Given an embedded leaf $L$ we now interpret $\cN / I_{L}\cN$, which is the same as $\vX(M) / (\widehat{\cF} + I_{L}\vX(M))$. 
 \begin{lemma}\label{lem:NL} Let $L$ be an embedded\footnote{If the leaf $L$ is  not embedded, then in order to realize $C^{\infty}(L;NL)$ from $\cN$
one can proceed as in \cite[Rem. 1.16]{AndrSk} or \cite[Rem. 1.8]{AnZa11}, replacing $\cF$ there by $\cN$.} leaf of $(M,\cF)$, and let 
$NL:=\frac{T_{L}M}{TL}$ be its normal bundle.
The evaluation map identifies  $\cN / I_{L}\cN$ with $C^{\infty}(L;NL)$. Hence $\cN / I_{L}\cN$ is a projective $C^{\infty}(L)$-module.
\end{lemma}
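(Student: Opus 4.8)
The plan is to prove Lemma \ref{lem:NL} by establishing the evaluation map as an isomorphism of $C^{\infty}(L)$-modules, proceeding in close analogy to the pointwise statement already proven in Lemma \ref{NxL}. First I would define the candidate map explicitly. Sending $\langle X\rangle\in\cN/I_L\cN$ to the section $x\mapsto (X_x \bmod T_xL)$ of $NL\to L$ gives a well-defined $C^{\infty}(L)$-linear map $\cN/I_L\cN\to C^{\infty}(L;NL)$: it is well defined because elements of $\cF$ are tangent to $L$ (so they evaluate into $T_xL$ at each $x\in L$) and because multiplying $X$ by a function in $I_L$ produces a vector field vanishing along $L$. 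Linearity over $C^{\infty}(L)$ is immediate once one notes that every function on $L$ extends to $M$ (as $L$ is embedded).

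Next I would prove surjectivity. Given a smooth section $s$ of $NL$, I would lift it, using an embedded tubular neighborhood of $L$ and a splitting $T_LM\cong TL\oplus NL$, to a vector field $X\in\vX(M)$ defined near $L$ whose class along $L$ is $s$; extending by a cutoff gives a global $X\in\vX(M)$ with $\langle X\rangle\mapsto s$. For injectivity I would argue fiberwise-with-parameters: suppose $\langle X\rangle$ maps to the zero section, i.e. $X_x\in T_xL$ for every $x\in L$. The goal is to produce $Y\in\cF$ and $Z\in I_L\vX(M)$ with $X=Y+Z$. The idea is to smoothly choose, for each $x\in L$, an element of $\cF$ realizing the tangent vector $X_x\in T_xL=F_x$, which is possible because $ev_x\colon\cF_x\to F_x$ is surjective (the short exact sequence in item b) of \S\ref{sec:fol}); doing this smoothly in $x$ uses that $A_L=\cup_{x\in L}\cF_x$ is a (transitive) Lie algebroid over $L$ whose sections are $\cF/I_L\cF$ (item c) of \S\ref{sec:fol}). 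This yields $Y\in\cF$ with $Y_x=X_x$ for all $x\in L$, whence $X-Y$ vanishes along $L$, i.e. $X-Y\in I_L\vX(M)$, giving $\langle X\rangle=0$ in $\cN/I_L\cN$. Finally, since $C^{\infty}(L;NL)$ is the module of sections of a vector bundle over $L$, it is finitely generated and projective, so $\cN/I_L\cN$ inherits this property through the isomorphism.

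The main obstacle is the smoothness in the injectivity step: the pointwise surjectivity of $ev_x\colon\cF_x\to F_x$ must be upgraded to a global smooth choice of $Y\in\cF$ interpolating the prescribed tangent vectors along $L$. The clean way to handle this is to invoke the identification $C^{\infty}(L;A_L)=\cF/I_L\cF$ from item c) of \S\ref{sec:fol}, under which the anchor $A_L\to TL$ is surjective (transitivity of $A_L$); a smooth section of $A_L$ projecting to the section $x\mapsto X_x$ of $TL$ then lifts to the desired $Y$, with the surjectivity of the anchor supplying such a section (e.g. via a right inverse built from a connection on the transitive algebroid $A_L$). The embeddedness of $L$ is what makes both this identification and the extension of functions/vector fields from $L$ to $M$ straightforward; for immersed $L$ one instead argues locally as indicated in the footnote, replacing $\cF$ by $\cN$ in the construction of \cite[Rem. 1.8]{AnZa11}.
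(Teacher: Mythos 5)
Your proposal is correct, and the overall skeleton (well-definedness, surjectivity, injectivity, then Serre--Swan for projectivity) matches the paper's; the difference lies in how the key injectivity step is discharged. Both arguments reduce injectivity to the claim that a vector field $X$ with $X|_L\subset TL$ admits $Y\in\cF$ with $Y|_L=X|_L$. The paper proves this claim directly and elementarily: locally it is immediate from the splitting theorem (Prop.~\ref{thm:splitting}), since in a splitting chart $\cF$ contains the coordinate vector fields spanning the leaf directions, and the local pieces are glued by a partition of unity. You instead route the argument through the transitive Lie algebroid $A_L$, using the identification $C^{\infty}(L;A_L)=\cF/I_L\cF$ and a right inverse of the (surjective) anchor $A_L\to TL$ supplied by a connection on the transitive algebroid. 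This is legitimate and arguably cleaner as a reduction to a standard fact, but note that the identification $C^{\infty}(L;A_L)=\cF/I_L\cF$ from item c) of \S\ref{sec:fol} is itself established (in the cited reference) by essentially the same splitting-theorem-plus-partition-of-unity argument, so you are outsourcing rather than avoiding the local work; the paper's version is the more self-contained of the two. Your explicit treatment of surjectivity via a tubular neighborhood, and of well-definedness, fills in details the paper merely asserts, and is correct.
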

\begin{proof}%\mc{I replaced $\vX_{c}$ by $\vX$ in this proof}
We parallel  the proof of Lemma \ref{NxL}. The map $$q \colon   \vX(M) / (\widehat{\cF}  + I_{L}\vX(M))   \to C^{\infty}\left(L;\frac{T_{L}M}{TL}\right),\;\;\; \langle X \rangle \mapsto  X\mid_{L}  \text{ mod }TL$$ is well defined and surjective. 
We show  its injectivity. If $X\in \vX(M)$ satisfies $X|_L\subset TL$, then there exists $Y\in \widehat{\cF}$ with $Y_L=X_L$ (this is clear locally by Prop. \ref{thm:splitting}, and holds on the whole of $L$ by a partition of unity argument).
 Hence $X-Y\in I_L\vX(M)$. 
\end{proof}

\begin{remark}\label{rem:highordtransf}(\textbf{Higher order holonomy transformations})

The normal bundle $N$ carries the  first order transversal information of the foliation $(M,\cF)$, whence the correct way to think of the action of the holonomy groupoid $H$ on $N$ is as linearized holonomy transformations. However, higher order holonomy is interesting as well (\cf Dufour et al \cite{Dufour1, Dufour2}). 
%In example \ref{kjets} we exhibited a case where higher order holonomy is carried by the module $\cN$. It is natural to ask whether all the higher order holonomy transformations can be described by an action of $H$ on $\cN$, but no such action exists.
In the regular case, this higher order holonomy is described by an action on $\cN$ of the $k$-th jet prolongation $J^k H$ of the (smooth) holonomy groupoid,    for every $k \in \N$ (\cf \cite[App. A, B]{Evens-Lu-Weinstein}). Roughly speaking, for any Lie groupoid $G$ its $k$-th jet prolongation  $J^k G$ is the Lie groupoid formed by the $k$-th tangent spaces of all its bisections. When $(M,\cF)$ is singular the holonomy groupoid is no longer smooth, so we cannot apply directly this definition. However, $J^k H$ can be defined as a quotient. Let us sketch this construction for $k=1$:

Given a bi-submersion $(U,\bt,\bs)$ of $(M,\cF)$, one can define its first jet prolongation $(J^1 U, j^1 \bt, j^1 \bs)$. Elements of $J^1 U$ are of the form $TV$, where $V$ is a bisection of $U$, and $j^1 \bt = \bt\circ\pi$, $j^1 \bs = \bs\circ\pi$, where $\pi : TV \to V$ is the bundle projection. One checks easily that it is also a bi-submersion of $(M,\cF)$. Furthermore, a morphism of bi-submersions $f : U_1 \to U_2$ can be prolonged to a morphism $j^1 f : J^1 U_1 \to J^2 U_2$. This way, given an atlas $\cU = \{(U_i,\bt_i,\bs_i)\}_{i \in I}$ we define $J^1 H$ as the quotient of the atlas $J^1\cU = \{(J^1 U_i, j^1 \bt_i, j^1 \bs_i)\}_{i \in I}$.

Now a bisection $V$ of $U$ corresponds to a local diffeomorphism $\phi^V$ of $M$ such that $(\phi^V)_* \cF \subset \cF$. It follows that the class of $TV$ in $J^1 H$ corresponds exactly to the germ of $(\phi^V)_* : \vX(M) \to \vX(M)$, which descends to $(\phi^V)_* : \cN \to \cN$. This formula defines the action of $J^1 H$ on $\cN$.
\end{remark}

\end{document}